\newif \ifJOURNAL
\JOURNALfalse

\ifJOURNAL
\documentclass{mcom-l}

%     If you need symbols beyond the basic set, uncomment this command.
%\usepackage{amssymb}

\usepackage{amsmath,amsfonts,amsthm,amssymb}
\usepackage{mathrsfs}
\usepackage[breaklinks,bookmarks=false]{hyperref}
\usepackage{cleveref,color,enumitem,mathtools}
\usepackage[only,llbracket,rrbracket,llparenthesis,rrparenthesis]{stmaryrd}
\usepackage{bm}
\usepackage[textsize=tiny,color=blue!10!white]{todonotes}
\usepackage{mathtools,graphicx,epstopdf,pgfplots,subcaption,xspace}
\usepackage{adjustbox}
\numberwithin{equation}{section}

\newtheorem{theorem}{Theorem}[section]
\newtheorem{lemma}[theorem]{Lemma}
\newtheorem{proposition}[theorem]{Proposition}
\newtheorem{corollary}[theorem]{Corollary}
\newtheorem{assumption}[theorem]{Assumption}

\theoremstyle{definition}
\newtheorem{definition}[theorem]{Definition}
\newtheorem{example}[theorem]{Example}

\theoremstyle{remark}
\newtheorem{remark}[theorem]{Remark}
\newcommand{\proofbox}{\qed}

\numberwithin{equation}{section}

\newcommand{\R}{\mathbb{R}}
\newcommand{\N}{\mathbb{N}}

\newcommand{\calVk}{\mathcal{V}_k}
\newcommand{\calVki}{\mathcal{V}_{k,\Omega}}
\newcommand{\Tk}{\mathcal{T}_k}
\DeclareMathOperator{\Card}{card}

\DeclareMathOperator{\diam}{diam}
\renewcommand{\dim}{d}

\newcommand{\calE}{\mathcal{E}}
\newcommand{\calEK}{\calE_K}
\newcommand{\Tke}{\mathcal{T}_{k,E}}
\newcommand{\calVkx}{\mathcal{V}_{k,i}}
\newcommand{\Fki}{F_{K,i}}
\newcommand{\Fkj}{F_{K,j}}
\newcommand{\wEk}{{\omega_{k,E}}}

\newcommand{\Dk}{{D_k}}

\newcommand{\calEki}{\calE_{k,\Omega}}
\newcommand{\abs}[1]{\lvert#1\rvert}
\newcommand{\norm}[1]{\lVert#1\rVert}
\newcommand{\Mfty}{M_\infty}

\newcommand{\Rddsym}{\R^{\dim\times\dim}_{\mathrm{sym}}}
\newcommand{\Rddsymp}{\R^{\dim\times\dim}_{\mathrm{sym},+}}
\newcommand{\CD}{C_{D}}
\newcommand{\Cstab}{C_{\mathrm{stab}}}
\newcommand{\htk}{h_{\Tk}}
\newcommand{\mkp}{{m_{k,+}}}

\newcommand{\Vkp}{{V_{k,+}}}
\newcommand{\Bku}{{B_k(u,R/2)}}
\newcommand{\Rk}{{\mathcal{R}_k}}

\newcommand{\logLogSlopeTriangleI}[5]
{
	% #1. Relative offset in x direction.
	% #2. Width in x direction, so xA-xB.
	% #3. Relative offset in y direction.
	% #4. Slope d(y)/d(log10(x)).
	% #5. Plot options.
	
	\pgfplotsextra
	{
		\pgfkeysgetvalue{/pgfplots/xmin}{\xmin}
		\pgfkeysgetvalue{/pgfplots/xmax}{\xmax}
		\pgfkeysgetvalue{/pgfplots/ymin}{\ymin}
		\pgfkeysgetvalue{/pgfplots/ymax}{\ymax}
		
		% Calculate auxilliary quantities, in relative sense.
		\pgfmathsetmacro{\xArel}{#1}
		\pgfmathsetmacro{\yArel}{#3}
		\pgfmathsetmacro{\xBrel}{#1-#2}
		\pgfmathsetmacro{\yBrel}{\yArel}
		\pgfmathsetmacro{\xCrel}{\xArel}
		%\pgfmathsetmacro{\yCrel}{ln(\yC/exp(\ymin))/ln(exp(\ymax)/exp(\ymin))} % REPLACE THIS EXPRESSION WITH AN EXPRESSION INDEPENDENT OF \yC TO PREVENT THE 'DIMENSION TOO LARGE' ERROR.
		
		\pgfmathsetmacro{\lnxB}{\xmin*(1-(#1-#2))+\xmax*(#1-#2)} % in [xmin,xmax].
		\pgfmathsetmacro{\lnxA}{\xmin*(1-#1)+\xmax*#1} % in [xmin,xmax].
		\pgfmathsetmacro{\lnyA}{\ymin*(1-#3)+\ymax*#3} % in [ymin,ymax].
		\pgfmathsetmacro{\lnyC}{\lnyA+#4*(\lnxA-\lnxB)}
		\pgfmathsetmacro{\yCrel}{\lnyC-\ymin)/(\ymax-\ymin)} % THE IMPROVED EXPRESSION WITHOUT 'DIMENSION TOO LARGE' ERROR.
		
		% Define coordinates for \draw. MIND THE 'rel axis cs' as opposed to the 'axis cs'.
		\coordinate (A) at (rel axis cs:\xArel,\yArel);
		\coordinate (B) at (rel axis cs:\xBrel,\yBrel);
		\coordinate (C) at (rel axis cs:\xCrel,\yCrel);
		
		% Draw slope triangle.
		\draw[#5]   (A)-- node[pos=0.5,anchor=north] {\scriptsize $1$}
		(B)-- 
		(C)-- node[pos=0.5,anchor=west] {\scriptsize $0.3$}
		cycle;
	}
}

\newcommand{\logLogSlopeTriangleII}[5]
{
	% #1. Relative offset in x direction.
	% #2. Width in x direction, so xA-xB.
	% #3. Relative offset in y direction.
	% #4. Slope d(y)/d(log10(x)).
	% #5. Plot options.
	
	\pgfplotsextra
	{
		\pgfkeysgetvalue{/pgfplots/xmin}{\xmin}
		\pgfkeysgetvalue{/pgfplots/xmax}{\xmax}
		\pgfkeysgetvalue{/pgfplots/ymin}{\ymin}
		\pgfkeysgetvalue{/pgfplots/ymax}{\ymax}
		
		% Calculate auxilliary quantities, in relative sense.
		\pgfmathsetmacro{\xArel}{#1}
		\pgfmathsetmacro{\yArel}{#3}
		\pgfmathsetmacro{\xBrel}{#1-#2}
		\pgfmathsetmacro{\yBrel}{\yArel}
		\pgfmathsetmacro{\xCrel}{\xArel}
		%\pgfmathsetmacro{\yCrel}{ln(\yC/exp(\ymin))/ln(exp(\ymax)/exp(\ymin))} % REPLACE THIS EXPRESSION WITH AN EXPRESSION INDEPENDENT OF \yC TO PREVENT THE 'DIMENSION TOO LARGE' ERROR.
		
		\pgfmathsetmacro{\lnxB}{\xmin*(1-(#1-#2))+\xmax*(#1-#2)} % in [xmin,xmax].
		\pgfmathsetmacro{\lnxA}{\xmin*(1-#1)+\xmax*#1} % in [xmin,xmax].
		\pgfmathsetmacro{\lnyA}{\ymin*(1-#3)+\ymax*#3} % in [ymin,ymax].
		\pgfmathsetmacro{\lnyC}{\lnyA+#4*(\lnxA-\lnxB)}
		\pgfmathsetmacro{\yCrel}{\lnyC-\ymin)/(\ymax-\ymin)} % THE IMPROVED EXPRESSION WITHOUT 'DIMENSION TOO LARGE' ERROR.
		
		% Define coordinates for \draw. MIND THE 'rel axis cs' as opposed to the 'axis cs'.
		\coordinate (A) at (rel axis cs:\xArel,\yArel);
		\coordinate (B) at (rel axis cs:\xBrel,\yBrel);
		\coordinate (C) at (rel axis cs:\xCrel,\yCrel);
		
		% Draw slope triangle.
		\draw[#5]   (A)-- node[pos=0.5,anchor=north] {\scriptsize $1$}
		(B)-- 
		(C)-- node[pos=0.5,anchor=west] {\scriptsize $1$}
		cycle;
	}
}

\newcommand{\logLogSlopeTriangleIII}[5]
{
	% #1. Relative offset in x direction.
	% #2. Width in x direction, so xA-xB.
	% #3. Relative offset in y direction.
	% #4. Slope d(y)/d(log10(x)).
	% #5. Plot options.
	
	\pgfplotsextra
	{
		\pgfkeysgetvalue{/pgfplots/xmin}{\xmin}
		\pgfkeysgetvalue{/pgfplots/xmax}{\xmax}
		\pgfkeysgetvalue{/pgfplots/ymin}{\ymin}
		\pgfkeysgetvalue{/pgfplots/ymax}{\ymax}
		
		% Calculate auxilliary quantities, in relative sense.
		\pgfmathsetmacro{\xArel}{#1}
		\pgfmathsetmacro{\yArel}{#3}
		\pgfmathsetmacro{\xBrel}{#1-#2}
		\pgfmathsetmacro{\yBrel}{\yArel}
		\pgfmathsetmacro{\xCrel}{\xArel}
		%\pgfmathsetmacro{\yCrel}{ln(\yC/exp(\ymin))/ln(exp(\ymax)/exp(\ymin))} % REPLACE THIS EXPRESSION WITH AN EXPRESSION INDEPENDENT OF \yC TO PREVENT THE 'DIMENSION TOO LARGE' ERROR.
		
		\pgfmathsetmacro{\lnxB}{\xmin*(1-(#1-#2))+\xmax*(#1-#2)} % in [xmin,xmax].
		\pgfmathsetmacro{\lnxA}{\xmin*(1-#1)+\xmax*#1} % in [xmin,xmax].
		\pgfmathsetmacro{\lnyA}{\ymin*(1-#3)+\ymax*#3} % in [ymin,ymax].
		\pgfmathsetmacro{\lnyC}{\lnyA+#4*(\lnxA-\lnxB)}
		\pgfmathsetmacro{\yCrel}{\lnyC-\ymin)/(\ymax-\ymin)} % THE IMPROVED EXPRESSION WITHOUT 'DIMENSION TOO LARGE' ERROR.
		
		% Define coordinates for \draw. MIND THE 'rel axis cs' as opposed to the 'axis cs'.
		\coordinate (A) at (rel axis cs:\xArel,\yArel);
		\coordinate (B) at (rel axis cs:\xBrel,\yBrel);
		\coordinate (C) at (rel axis cs:\xCrel,\yCrel);
		
		% Draw slope triangle.
		\draw[#5]   (A)-- node[pos=0.5,anchor=north] {\scriptsize $1$}
		(B)-- 
		(C)-- node[pos=0.5,anchor=west] {\scriptsize $\frac{1}{2}$}
		cycle;
	}
}

\else

%%%%%%ARXIV 
\documentclass[8pt]{article}
\usepackage[a4paper, total={5.5in, 8in}]{geometry}
\usepackage[numbers]{natbib}
\usepackage{amsmath,amsfonts,amsthm,amssymb}
\usepackage{mathrsfs}
\usepackage[breaklinks,bookmarks=false]{hyperref}

\hypersetup{colorlinks, linkcolor=blue, citecolor=blue,
urlcolor=blue, plainpages=false, pdfwindowui=false,
pdfstartview={FitH}}
%%%%

\usepackage{cleveref,enumitem,mathtools}
\usepackage{hyperref}
\usepackage[only,llbracket,rrbracket,llparenthesis,rrparenthesis]{stmaryrd} 
\usepackage[textsize=tiny,color=blue!10!white]{todonotes}
\usepackage{bm}
\usepackage{graphicx,epstopdf,pgfplots,subcaption,xspace}
\usepackage{adjustbox}
\setlength{\marginparwidth}{2cm}

\usepackage[medium,compact]{titlesec}
\titleformat*{\section}{\large\bfseries}
\titleformat*{\subsection}{\bfseries}

\numberwithin{equation}{section}

\newtheorem{theorem}{Theorem}[section]{\bfseries}{\it}
{\bfseries}{\it}
\newtheorem{lemma}[theorem]{Lemma}{\bfseries}{\it}
\newtheorem{corollary}[theorem]{Corollary}{\bfseries}{\it}
{\bfseries}{\it}
{\bfseries}{\rmfamily}
{\bfseries}{\it}
\theoremstyle{definition}
\newtheorem{remark}{Remark}[section]{\bfseries}{\rmfamily}
\newcommand{\proofbox}{\qed}

\newcommand{\R}{\mathbb{R}}
\newcommand{\N}{\mathbb{N}}

\newcommand{\calVk}{\mathcal{V}_k}
\newcommand{\calVki}{\mathcal{V}_{k,\Omega}}
\newcommand{\Tk}{\mathcal{T}_k}
\DeclareMathOperator{\Card}{card}

\DeclareMathOperator{\diam}{diam}
\renewcommand{\dim}{d}

\newcommand{\calE}{\mathcal{E}}
\newcommand{\calEK}{\calE_K}
\newcommand{\Tke}{\mathcal{T}_{k,E}}
\newcommand{\calVkx}{\mathcal{V}_{k,i}}
\newcommand{\Fki}{F_{K,i}}
\newcommand{\Fkj}{F_{K,j}}
\newcommand{\wEk}{{\omega_{k,E}}}

\newcommand{\Dk}{{D_k}}

\newcommand{\calEki}{\calE_{k,\Omega}}
\newcommand{\abs}[1]{\lvert#1\rvert}
\newcommand{\norm}[1]{\lVert#1\rVert}
\newcommand{\Mfty}{M_\infty}

\newcommand{\Rddsym}{\R^{\dim\times\dim}_{\mathrm{sym}}}
\newcommand{\Rddsymp}{\R^{\dim\times\dim}_{\mathrm{sym},+}}
\newcommand{\CD}{C_{D}}
\newcommand{\Cstab}{C_{\mathrm{stab}}}
\newcommand{\htk}{h_{\Tk}}
\newcommand{\mkp}{{m_{k,+}}}

\newcommand{\Vkp}{{V_{k,+}}}
\newcommand{\Bku}{{B_k(u,R/2)}}
\newcommand{\Rk}{{\mathcal{R}_k}}

\newcommand{\logLogSlopeTriangleI}[5]
{
	% #1. Relative offset in x direction.
	% #2. Width in x direction, so xA-xB.
	% #3. Relative offset in y direction.
	% #4. Slope d(y)/d(log10(x)).
	% #5. Plot options.
	
	\pgfplotsextra
	{
		\pgfkeysgetvalue{/pgfplots/xmin}{\xmin}
		\pgfkeysgetvalue{/pgfplots/xmax}{\xmax}
		\pgfkeysgetvalue{/pgfplots/ymin}{\ymin}
		\pgfkeysgetvalue{/pgfplots/ymax}{\ymax}
		
		% Calculate auxilliary quantities, in relative sense.
		\pgfmathsetmacro{\xArel}{#1}
		\pgfmathsetmacro{\yArel}{#3}
		\pgfmathsetmacro{\xBrel}{#1-#2}
		\pgfmathsetmacro{\yBrel}{\yArel}
		\pgfmathsetmacro{\xCrel}{\xArel}
		%\pgfmathsetmacro{\yCrel}{ln(\yC/exp(\ymin))/ln(exp(\ymax)/exp(\ymin))} % REPLACE THIS EXPRESSION WITH AN EXPRESSION INDEPENDENT OF \yC TO PREVENT THE 'DIMENSION TOO LARGE' ERROR.
		
		\pgfmathsetmacro{\lnxB}{\xmin*(1-(#1-#2))+\xmax*(#1-#2)} % in [xmin,xmax].
		\pgfmathsetmacro{\lnxA}{\xmin*(1-#1)+\xmax*#1} % in [xmin,xmax].
		\pgfmathsetmacro{\lnyA}{\ymin*(1-#3)+\ymax*#3} % in [ymin,ymax].
		\pgfmathsetmacro{\lnyC}{\lnyA+#4*(\lnxA-\lnxB)}
		\pgfmathsetmacro{\yCrel}{\lnyC-\ymin)/(\ymax-\ymin)} % THE IMPROVED EXPRESSION WITHOUT 'DIMENSION TOO LARGE' ERROR.
		
		% Define coordinates for \draw. MIND THE 'rel axis cs' as opposed to the 'axis cs'.
		\coordinate (A) at (rel axis cs:\xArel,\yArel);
		\coordinate (B) at (rel axis cs:\xBrel,\yBrel);
		\coordinate (C) at (rel axis cs:\xCrel,\yCrel);
		
		% Draw slope triangle.
		\draw[#5]   (A)-- node[pos=0.5,anchor=north] {\scriptsize $1$}
		(B)-- 
		(C)-- node[pos=0.5,anchor=west] {\scriptsize $0.3$}
		cycle;
	}
}

\newcommand{\logLogSlopeTriangleII}[5]
{
	% #1. Relative offset in x direction.
	% #2. Width in x direction, so xA-xB.
	% #3. Relative offset in y direction.
	% #4. Slope d(y)/d(log10(x)).
	% #5. Plot options.
	
	\pgfplotsextra
	{
		\pgfkeysgetvalue{/pgfplots/xmin}{\xmin}
		\pgfkeysgetvalue{/pgfplots/xmax}{\xmax}
		\pgfkeysgetvalue{/pgfplots/ymin}{\ymin}
		\pgfkeysgetvalue{/pgfplots/ymax}{\ymax}
		
		% Calculate auxilliary quantities, in relative sense.
		\pgfmathsetmacro{\xArel}{#1}
		\pgfmathsetmacro{\yArel}{#3}
		\pgfmathsetmacro{\xBrel}{#1-#2}
		\pgfmathsetmacro{\yBrel}{\yArel}
		\pgfmathsetmacro{\xCrel}{\xArel}
		%\pgfmathsetmacro{\yCrel}{ln(\yC/exp(\ymin))/ln(exp(\ymax)/exp(\ymin))} % REPLACE THIS EXPRESSION WITH AN EXPRESSION INDEPENDENT OF \yC TO PREVENT THE 'DIMENSION TOO LARGE' ERROR.
		
		\pgfmathsetmacro{\lnxB}{\xmin*(1-(#1-#2))+\xmax*(#1-#2)} % in [xmin,xmax].
		\pgfmathsetmacro{\lnxA}{\xmin*(1-#1)+\xmax*#1} % in [xmin,xmax].
		\pgfmathsetmacro{\lnyA}{\ymin*(1-#3)+\ymax*#3} % in [ymin,ymax].
		\pgfmathsetmacro{\lnyC}{\lnyA+#4*(\lnxA-\lnxB)}
		\pgfmathsetmacro{\yCrel}{\lnyC-\ymin)/(\ymax-\ymin)} % THE IMPROVED EXPRESSION WITHOUT 'DIMENSION TOO LARGE' ERROR.
		
		% Define coordinates for \draw. MIND THE 'rel axis cs' as opposed to the 'axis cs'.
		\coordinate (A) at (rel axis cs:\xArel,\yArel);
		\coordinate (B) at (rel axis cs:\xBrel,\yBrel);
		\coordinate (C) at (rel axis cs:\xCrel,\yCrel);
		
		% Draw slope triangle.
		\draw[#5]   (A)-- node[pos=0.5,anchor=north] {\scriptsize $1$}
		(B)-- 
		(C)-- node[pos=0.5,anchor=west] {\scriptsize $1$}
		cycle;
	}
}

\newcommand{\logLogSlopeTriangleIII}[5]
{
	% #1. Relative offset in x direction.
	% #2. Width in x direction, so xA-xB.
	% #3. Relative offset in y direction.
	% #4. Slope d(y)/d(log10(x)).
	% #5. Plot options.
	
	\pgfplotsextra
	{
		\pgfkeysgetvalue{/pgfplots/xmin}{\xmin}
		\pgfkeysgetvalue{/pgfplots/xmax}{\xmax}
		\pgfkeysgetvalue{/pgfplots/ymin}{\ymin}
		\pgfkeysgetvalue{/pgfplots/ymax}{\ymax}
		
		% Calculate auxilliary quantities, in relative sense.
		\pgfmathsetmacro{\xArel}{#1}
		\pgfmathsetmacro{\yArel}{#3}
		\pgfmathsetmacro{\xBrel}{#1-#2}
		\pgfmathsetmacro{\yBrel}{\yArel}
		\pgfmathsetmacro{\xCrel}{\xArel}
		%\pgfmathsetmacro{\yCrel}{ln(\yC/exp(\ymin))/ln(exp(\ymax)/exp(\ymin))} % REPLACE THIS EXPRESSION WITH AN EXPRESSION INDEPENDENT OF \yC TO PREVENT THE 'DIMENSION TOO LARGE' ERROR.
		
		\pgfmathsetmacro{\lnxB}{\xmin*(1-(#1-#2))+\xmax*(#1-#2)} % in [xmin,xmax].
		\pgfmathsetmacro{\lnxA}{\xmin*(1-#1)+\xmax*#1} % in [xmin,xmax].
		\pgfmathsetmacro{\lnyA}{\ymin*(1-#3)+\ymax*#3} % in [ymin,ymax].
		\pgfmathsetmacro{\lnyC}{\lnyA+#4*(\lnxA-\lnxB)}
		\pgfmathsetmacro{\yCrel}{\lnyC-\ymin)/(\ymax-\ymin)} % THE IMPROVED EXPRESSION WITHOUT 'DIMENSION TOO LARGE' ERROR.
		
		% Define coordinates for \draw. MIND THE 'rel axis cs' as opposed to the 'axis cs'.
		\coordinate (A) at (rel axis cs:\xArel,\yArel);
		\coordinate (B) at (rel axis cs:\xBrel,\yBrel);
		\coordinate (C) at (rel axis cs:\xCrel,\yCrel);
		
		% Draw slope triangle.
		\draw[#5]   (A)-- node[pos=0.5,anchor=north] {\scriptsize $1$}
		(B)-- 
		(C)-- node[pos=0.5,anchor=west] {\scriptsize $\frac{1}{2}$}
		cycle;
	}
}

\title{{Near and full} quasi-optimality of finite element approximation{s} of stationary second-order mean field games}
  \author{Yohance A. P. Osborne\footnotemark[1]~ and Iain Smears\footnotemark[2]}

\fi

\begin{document}

% \title[short text for running head]{full title}
\ifJOURNAL
\title[Quasi-optimality of FEM for stationary Second-order MFG]{Near and full quasi-optimality of finite element approximations of stationary second-order mean field games}

%    Only \author and \address are required; other information is
%    optional.  Remove any unused author tags.

%    author one information
% \author[short version for running head]{name for top of paper}
\author{Yohance A.\ P.\ Osborne}
\address{Department of Mathematical Sciences, Durham University, Stockton Road, DH1 3LE Durham, United Kingdom}
%\curraddr{}
\email{yohance.a.osborne@durham.ac.uk}
\thanks{This work was supported by The Royal Society Career Development Fellowship programme [award reference number CDF$\backslash$R1$\backslash$241014].}

%    author two information
\author{Iain Smears}
\address{Department of Mathematics, University College London, Gower
	Street, WC1E 6BT London, United Kingdom}
%\curraddr{}
\email{i.smears@ucl.ac.uk}
\thanks{This work was supported by the Engineering and Physical Sciences Research Council [grant number EP/Y008758/1].}

%    \subjclass is required.
\subjclass[2020]{65N15, 65N30, 35Q89}

\date{xx/xx/2024}

\dedicatory{}

%    Abstract is required.
\begin{abstract}
	We establish \emph{a priori} error bounds for monotone stabilized finite element discretizations of stationary second-order mean field games (MFG) on Lipschitz polytopal domains.
	Under suitable hypotheses, we prove that the approximation is asymptotically nearly quasi-optimal in the $H^1$-norm in the sense that, on sufficiently fine meshes, the error between exact and computed solutions is bounded by the best approximation error of the corresponding finite element space, plus possibly an additional term, due to the stabilization, that is of optimal order with respect to the mesh size.
	We thereby deduce optimal rates of convergence of the error with respect to the mesh-size for solutions with sufficient regularity.
	We further show full asymptotic quasi-optimality of the approximation error in the more restricted case of sequences of strictly acute meshes.
	Our third main contribution is to further show, in the case where the domain is convex, that the convergence rate for the $H^1$-norm error of the value function approximation remains optimal even if the density function only has minimal regularity in $H^1$.
\end{abstract}

\fi

\maketitle

\ifJOURNAL
\else

\renewcommand{\thefootnote}{\fnsymbol{footnote}}

\footnotetext[1]{Department of Mathematical Sciences, Durham University, Stockton Road, DH1 3LE Durham, United Kingdom (\texttt{yohance.a.osborne@durham.ac.uk}).}
\footnotetext[2]{Department of Mathematics, University College London, Gower
Street, WC1E 6BT London, United Kingdom (\texttt{i.smears@ucl.ac.uk}).}

\begin{abstract}
{We establish \emph{a priori} error bounds for monotone {stabilized} finite element discretizations of stationary second-order mean field games (MFG) on Lipschitz polytopal domains.}
{Under suitable hypotheses, we prove that the approximation is asymptotically nearly quasi-optimal in the $H^1$-norm in the sense that, on sufficiently fine meshes, the error between exact and computed solutions is bounded by the best approximation error of the corresponding finite element space, plus possibly an additional term, due to the stabilization, that is of optimal order with respect to the mesh size.}
{We thereby deduce optimal rates of convergence of the error with respect to the mesh-size for solutions with sufficient regularity.}
{We further show full asymptotic quasi-optimality of the approximation error in the more restricted case of sequences of strictly acute meshes.}
{Our third main contribution is to further show, in the case where the domain is convex, that the convergence rate for the $H^1$-norm error of the value function approximation remains optimal even if the density function only has minimal regularity in $H^1$.}
\end{abstract}

\fi

\section{Introduction}\label{sec-1-introduction}
Mean field games (MFG), introduced by Lasry \& Lions \cite{lasry2006jeuxI,lasry2006jeuxII,lasry2007mean} and independently by Huang, Caines \& Malham\'e \cite{huang2006large}, model {Nash} equilibria of rational games of {stochastic} optimal control in which there are infinitely many players. {The} equilibria are {characterized} by a system of partial differential equations (PDE) that consist of a Hamilton--Jacobi--Bellman (HJB) equation {for the} the value function $u$ of the underlying optimal control problem faced by the players, and a Kolmogorov--Fokker--Planck (KFP) equation {for the} the density $m$ of players in the state space of the game.
{We consider as a model problem} a quasilinear elliptic MFG system {of the form}
\begin{subequations}\label{sys}
	\begin{align}
		- \nu\Delta u+H(x,\nabla u)=F[m](x) \text{ }\quad\text{ in }\Omega,\label{eq:sys_hjb}
		\\
		-\nu\Delta m -\text{div}\left(m\frac{\partial H}{\partial p}\left(x,\nabla u\right)\right)= G(x) \quad\text{ in }\Omega,\label{eq:sys_kfp}
	\end{align}
\end{subequations}
along with homogeneous Dirichlet boundary conditions $u=0$ and $m=0$ on $\partial \Omega$.
Here $\Omega\subset \mathbb{R}^d$, $d\geq 2$, denotes the state space of the game, which is assumed to be a bounded, connected, polytopal open set with Lipschitz boundary $\partial \Omega$.
Whereas many works treat MFG with periodic boundary conditions and with zero source term in~\eqref{eq:sys_kfp}, we consider here Dirichlet conditions and nonzero source terms.
{This model problem can be motivated as a steady-state solution for a game where the players have an infinite horizon zero-discounted control problem with stopping at the first-exit time from the domain $\Omega$.}
{For more details on how the Dirichlet boundary conditions arise from the first-exit time stopping of the stochastic processes see for instance~\cite[Chapter~4]{FlemingSoner2006}.
The first-exit stopping time models players exiting the game upon reaching the boundary, which occurs for instance in applications to traffic flow and evacuation planning.
The source term $G$ in~\eqref{eq:sys_kfp} also arises in situations where new players enter/exit the game, or possibly after transformation of the problem from one with an inhomogeneous Dirichlet condition to one with a homogeneous one.
Note that for MFG systems with Dirichlet boundary conditions, the player density $m$ is usually not required to be a probability density.}
{The Hamiltonian $H\colon \overline{\Omega}\times \R^d\rightarrow \R
$ in~\eqref{sys} originates from the underlying optimal control problem, c.f.~\eqref{Hamiltonian} below, and is generally a convex function in its second variable.
In many applications, it is also globally Lipschitz continuous in the second variable, which results for instance from compactness of the underlying set of controls and continuity of the stochastic dynamics with respect to the controls.}
In this work, we assume in addition that $H$ is everywhere differentiable with respect to $p$ with a partial derivative $\frac{\partial H}{\partial p}$ that is Lipschitz continuous {with respect to $p\in \mathbb{R}^d$,} uniformly in $x\in\Omega$.
{The diffusion parameter $\nu>0$ in~\eqref{sys} is a constant.}
The coupling operator $F:L^2(\Omega)\to H^{-1}(\Omega)$ is a component of the running cost in the players' optimal control problems, and our analysis allows it to be either a local or nonlocal operator. Note {that in contrast to many works on MFG,} we do not require that $F$ be \emph{smoothing}; in fact $F$ can even be of differential type.

In this work, we are primarily interested in the \emph{a priori} error analysis of numerical approximations of the solutions of MFG systems. 
Existing results on error analysis for MFG in the literature so far have primarily been of a \emph{qualitative} kind, such as proving that the error (in some norm) between exact and computed solutions vanishes in the small mesh/grid limit; this is often called \emph{plain} convergence.
For problems with differentiable Hamiltonians, {there are results on the plain convergence of} methods on Cartesian grids, such as monotone finite difference methods ~\cite{achdou2010mean,achdou2013mean,achdou2016convergence,Saito2023} and semi-Lagrangian methods~\cite{CarliniSilva14,CarliniSilve15,carlini2018discretization,CarliniSilvaZorkot24,ashrafyan2024fully}.  
{Convergence of semi-Lagrangian methods has also been shown for fractional and nonlocal MFG in~\cite{ChowdhuryErslandJakobsen2023}.
}
For problems with nondifferentiable Hamiltonians, we recently showed in~\cite{osborne2022analysis,osborne2024erratum,osborne2023finite} that the MFG system can be generalized as a \emph{Partial Differential Inclusion} (PDI), and we proved the convergence of a monotone {stabilized} Finite Element Method (FEM) {with piecewise affine elements.}
{Although the analysis in these works was of a more qualitative nature, the quantitative performance of the FEM for MFG has been shown through extensive numerical experiments in~\cite{osborne2024thesis,osborne2023finite,osborne2022analysis,osborne2024erratum}. Furthermore, in \cite{osborne2024regularization}, we analysed the connection between the MFG PDI for nondifferentiable Hamiltonians and its classical PDE counterpart via regularization of the Hamiltonians.}

In contrast {to the range of qualitative results in the literature,} \emph{quantitative} results on the error analysis, such as bounds on the error on a given computational mesh/grid, are rare.
{Some of the main challenges of the MFG system in this regard include the quasilinear coupling of the system and the general lack of coercivity properties of the differential operators.}
To the best of our knowledge, quantitative error bounds have so far only been obtained by Bonnans, Liu \& Pfeiffer in~\cite{bonnans2022error}.
{In particular, in the context of a fully discrete finite difference scheme for some second-order parabolic MFG systems, they showed that if the exact solution $u$ and $m$ have $C^{1+r/2,2+r}$ regularity over the space-time domain for some $r\in (0,1)$, then the error, as measured in the discrete maximum norm for the value function and a discrete $L^\infty(L^1)$-norm for the density, is of order $h^r$ (the time-step size $\Delta t$ does not appear explicitly in the bound owing to their assumption that $\Delta t \lesssim h^2$).}
{Note that \cite{bonnans2022error} allows also for some Hamiltonians with superlinear growth.}
Besides the results in~\cite{bonnans2022error}, quantitative error bounds for the numerical approximation of solutions of MFG systems appear to be largely untouched.

	{As our main contributions in this work, we show error bounds for the stabilized {piecewise affine} FEM from~\cite{osborne2022analysis,osborne2024erratum,osborne2023finite} applied to the model problem~\eqref{sys}. The motivation for considering stabilized FEM is, in a first instance, that the stabilization leads to a discrete maximum principle and nonnegativity of the approximations in the density, which is then used in the proof of well-posedness of the discrete problems, see~\cite[Theorem~5.3]{osborne2022analysis,osborne2024erratum}.
		It is thus natural that the nonnegativity of the density approximations also plays an important role in the analysis of the error bounds.
		The stabilization also plays a role in uniform stability bounds for the discretized HJB and KFP equations, which are essential for the analysis of the MFG system.}
	
	{Our first main result}, stated in Theorem~\ref{theorem-disc-error-smooth-H-lip} below, {is an $H^1$-norm error bound for a class of stabilized FEM which covers the stabilizations of~\cite{osborne2023finite,osborne2022analysis,osborne2024erratum}.}
	In particular, when applied to the method of~\cite{osborne2023finite}, our bound implies that for a shape-regular sequence of computational meshes $\{\mathcal{T}_k\}_{k\in\mathbb{N}}$ that satisfies the Xu--Zikatanov condition~\cite{xu1999monotone}, the finite element approximation $(u_k,m_k)\in V_k\times V_k$ satisfies
	\begin{multline}\label{eq:intro_quasiopt}
		\|m-m_k\|_{H^1(\Omega)}+\|u-u_k\|_{H^1(\Omega)}
		\\
		\lesssim \inf_{\overline{m}_k\in V_k}\|m-\overline{m}_k\|_{H^1(\Omega)}+ \inf_{\overline{u}_k\in V_k}\|u-\overline{u}_k\|_{H^1(\Omega)}+\|h_{\mathcal{T}_k}\nabla m\|_{L^2(\Omega)}+\|h_{\mathcal{T}_k}\nabla u\|_{L^2(\Omega)},
	\end{multline}
	for all $k$ is sufficiently large, where $V_k$ denotes the $H_0^1$-conforming finite element space on the mesh $\mathcal{T}_k$, where $h_{\mathcal{T}_k}$ denotes the (elementwise) mesh-size function of the mesh, and where the constant hidden in the inequality is independent of $k\in\mathbb{N}$ (see Section~\ref{sec-2-notation} for details on the notation).
	{The additional terms $\|h_{\mathcal{T}_k}\nabla m\|_{L^2(\Omega)}$ and $\|h_{\mathcal{T}_k}\nabla u\|_{L^2(\Omega)}$ in~\eqref{eq:intro_quasiopt} stem from the stabilization terms of the method that are included to satisfy a discrete maximum principle.}
	The bound~\eqref{eq:intro_quasiopt} holds for solutions $(u,m)$ with the minimal $H^1$-regularity that is guaranteed by the well-posedness theory in~\cite{osborne2022analysis,osborne2024erratum}.
	{The main assumptions for the analysis are that the Hamiltonian $H$ has} a Lipschitz continuous partial derivative w.r.t.\ the gradient variable uniformly in space, {that the coupling term $F:L^2(\Omega)\to H^{-1}(\Omega)$ {is strongly monotone}, and that the source term $G$ {is} nonnegative.
		
{Recall that a numerical method is said to be \emph{quasi-optimal} in a given norm if the error between exact and computed solutions is bounded by a constant times the best-approximation error from the same approximation space.
It is asymptotically quasi-optimal if the mesh/grid is additionally required to be sufficiently fine.}
Since the additional terms in~\eqref{eq:intro_quasiopt} are typically of same or higher order {than the {$H^1$-norm} best-approximation error} {for piecewise affine elements}, we therefore say that the stabilized FEM considered here is \emph{nearly} asymptotically quasi-optimal in the $H^1$-norm.
{If a stronger condition is placed on the meshes, namely a strict acuteness condition as in~\cite{osborne2022analysis,osborne2024erratum}, then these additional terms can be removed and the resulting method is \emph{fully} asymptotically quasi-optimal with
\begin{equation}\label{eq:intro_quasiopt2}
	\|m-m_k\|_{H^1(\Omega)}+\|u-u_k\|_{H^1(\Omega)} \lesssim  \inf_{\overline{m}_k\in V_k}\|m-\overline{m}_k\|_{H^1(\Omega)}+ \inf_{\overline{u}_k\in V_k}\|u-\overline{u}_k\|_{H^1(\Omega)},
\end{equation}
for all $k$ sufficiently large.
However, the fact that the Xu--Zikatanov condition is less restrictive in practice than the strict acuteness condition means that the bound~\eqref{eq:intro_quasiopt} is likely the more relevant one in computational practice.}

		As a consequence of {the bound~\eqref{eq:intro_quasiopt}}, for sufficiently regular solutions, in Corollary~\ref{corollary-optimal-convergence-rate} below, we prove that the  of numerical approximations have optimal rates of convergence with respect to the mesh-size. 
		For instance, if both $u,m$ are in $H^2(\Omega)$, then
		\begin{equation}\label{eq:intro_H1_error_rate}
			\norm{m-m_k}_{H^1(\Omega)}+\norm{u-u_k}_{H^1(\Omega)}\lesssim h_k \left(\norm{m}_{H^2(\Omega)}+\norm{u}_{H^2(\Omega)}\right),
		\end{equation}
		where $h_k=\norm{h_{\mathcal{T}_k}}_{L^\infty(\Omega)}$ is the maximum mesh-size across the domain,} see Corollary~\ref{corollary-optimal-convergence-rate} below {covering the more general case where the solution has regularity in Sobolev spaces of fractional order.}
	{To the best of our knowledge, this is the first proof of optimal rates of convergence for any numerical method for MFG systems in the literature so far.
	}

	{The second main contribution of this work is to analyse the rates of convergence of the errors for the separate components of the solution. More precisely, we aim to explain why it was observed in} the numerical experiments of~\cite{osborne2022analysis,osborne2024erratum} that the convergence {rate of the error $\norm{u-u_k}_{H^1(\Omega)}$} for the value function approximations was of optimal order $h_k$ even in cases where the density $m$ has low regularity. 
	This observation is not explained by the bounds above, i.e.~\eqref{eq:intro_H1_error_rate} requires $m\in H^2(\Omega)$.
	In order to illuminate this matter, we prove in Theorem~\ref{theorem-disc-error-smooth-H-lip-app} below that
	\begin{equation}\label{eq:intro_value_bound}
		\|m-m_k\|_{L^2(\Omega)}+\|u-u_k\|_{H^1(\Omega)}\lesssim \inf_{\overline{u}_k\in V_k}\|u-\overline{u}_k\|_{H^1(\Omega)}+h_k\left(\|u\|_{H^1(\Omega)}+\|m\|_{H^1(\Omega)}\right),
	\end{equation}
	{for all $k$ sufficiently large,} which holds even if $u$ and $m$ have minimal regularity in $H^1(\Omega)$, under the additional assumption that $\Omega$ is convex.
	{Note crucially that the left-hand side of~\eqref{eq:intro_value_bound} involves only the $L^2$-norm of the error for the density, rather than the $H^1$-norm as in~\eqref{eq:intro_quasiopt}, which is key for obtaining a sharper bound on $\|u-u_k\|_{H^1(\Omega)}$.}
	The bound~\eqref{eq:intro_value_bound} implies that the convergence rate of the value function approximations {does not generally depend on} the regularity of $m$.
	{For instance, if $u\in H^2(\Omega)$, then~\eqref{eq:intro_value_bound} implies that $\norm{m-m_k}_{L^2(\Omega)}+\norm{u-u_k}_{H^1(\Omega)}$ is of order $h_k$, even for $m$ with minimal regularity in $H^1$.}
	{This result offers some insight into earlier computational results, despite the difference that the experiments in~\cite{osborne2022analysis,osborne2024erratum} involved nondifferentiable Hamiltonians whereas the proof of~\eqref{eq:intro_value_bound} assumes that $H$ has a Lipschitz continuous derivative.}
	
	This paper is organized as follows.
	{In Section~\ref{sec-2-notation} we set the notation and specify the assumptions on the problem.	In Section~\ref{sec:FEM} we present the framework for discretization, including the general class of stabilized FEM covered by the analysis, as well as two concrete instances from~\cite{osborne2022analysis,osborne2024erratum,osborne2023finite}.	The statement of the main results mentioned above are given in Section~\ref{sec:main_results}. The proofs of the main results are then the subjects of Sections \ref{sec:residuals} to \ref{sec:main_thm_2}. In particular, Section~\ref{sec:residuals} shows key stability results for the discretized HJB and KFP equations. In Section~\ref{sec:nonneg_approx}, we consider the near quasi-optimality of nonnegative approximations of the density function, and the $L^2$-norm stability of the discrete MFG system. We then apply these results in Section~\ref{sec:main_thm_1} to prove Theorem~\ref{theorem-disc-error-smooth-H-lip} and in Section~\ref{sec:main_thm_2} to prove Theorem~\ref{theorem-disc-error-smooth-H-lip-app}. {We conclude the paper in Section \ref{sec:numer-exp} with numerical experiments that illustrate Corollary \ref{corollary-optimal-convergence-rate} of Theorem \ref{theorem-disc-error-smooth-H-lip} and the conclusion of Theorem \ref{theorem-disc-error-smooth-H-lip-app} for problems involving nonsmooth solutions.}}
		
\section{Setting and Notation}\label{sec-2-notation}

\paragraph{\emph{Notation for inequalities.}}
In order to avoid the proliferation of generic constants, in the following, we write $a\lesssim b$ for real numbers $a$ and $b$ if $a\leq C b$ for some constant $C$ that may depend on the problem data and some fixed parameters for the sequence of meshes $\{\Tk\}_{k\in\N}$ appearing below, such as the shape-regularity parameter, but is otherwise independent of the mesh $\Tk$ and the mesh-size $h_k$. Throughout this work we will regularly specify the particular dependencies of the hidden constants.
\medskip 

\paragraph{\emph{General notation.}}
We denote $\mathbb{N}\coloneqq \{1,2,3,\cdots\}$.
For a Lebesgue measurable set $\omega\subset \R^\dim$, $\dim\in\N$, {the inner product on $L^2(\omega)$ and $L^2(\omega;\R^\dim)$ is denoted commonly by~$(\cdot,\cdot)_{\omega}$, with induced norm $\norm{\cdot}_\omega$. 
	There is no risk of confusion here since the scalar and vector cases of the inner-product and norm can then determined from their arguments.}
Moreover, we equip the space  $L^{\infty}(\omega;\mathbb{R}^{d\times d})$ of bounded matrix-valued functions on $\omega$ with the essential supremum norm $\|\cdot\|_{L^{\infty}(\omega;\mathbb{R}^{d\times d})}$ that is induced by the Frobenius norm on $\mathbb{R}^{d\times d}$. We let $|\cdot|_{d-2}$ denote the $(d-2)$-dimensional Hausdorff measure, which we note reduces to the counting measure when $d=2$. 
\medskip

\paragraph{\emph{Problem data.}}
Let $\Omega$ be a bounded, connected, polytopal open subset of $\mathbb{R}^d$ with Lipschitz boundary $\partial \Omega$.
{Note that although we focus here on the case of polytopal domains, a common procedure for the case of domains with curved boundaries is to approximate the geometry via the meshes; this technique has been studied extensively in the literature on numerics for surface PDEs~\cite{BurmanHansboLarson2018}.}
{For an integer $s \geq 0$, let $H^s(\Omega)$ denote the Sobolev space of order $s$, which consists of functions in  $L^2(\Omega)$ that have weak partial derivatives of order up to and including $s$ also in $L^2(\Omega)$.
	Let $H^1_0(\Omega)$ denote the space of functions in $H^1(\Omega)$ with vanishing trace on $\Omega$, and let $H^{-1}(\Omega)$ denote {the dual space of $H^1_0(\Omega)$}.
	Since $\Omega$ is assumed to have {a} Lipschitz boundary, the Sobolev space $H^s(\Omega)$ can be defined for real $s\geq 0$, for instance, by interpolation of spaces, see~\cite[Chapter~7]{AdamsFournier03} and \cite[Chapter~14]{BrennerScott08}.}

We make the following assumptions on the data appearing in \eqref{sys}. 
Let the diffusion $\nu>0$ be constant, and let $G\in H^{-1}(\Omega)$ be of the form $G=g_0-\nabla \cdot\tilde{g}$ with $g_0\in L^{q/2}(\Omega)$ and $\tilde{g}\in L^q(\Omega;\mathbb{R}^d)$ for some $q>d$. We assume that $G\in H^{-1}(\Omega)$ is nonnegative in the distributional sense in that
$\langle G, \phi\rangle_{H^{-1}\times H_0^1} \geq 0$ for all functions $\phi\in H_0^1(\Omega)$ that are nonnegative a.e.\ in $\Omega$. Next, let $F:L^2(\Omega)\to H^{-1}(\Omega)$ be a Lipschitz continuous operator, i.e.\ that satisfies 
\begin{equation} 
	\|F[w]-F[v]\|_{H^{-1}(\Omega)}\leq L_F\|w-v\|_{\Omega} \quad\forall w,\,v\in  L^2(\Omega).\label{F2}  
\end{equation}
for some constant $L_F\geq 0$. 
{Note that $F$ is allowed to be either a local or nonlocal operator; and also that $F$ is not necessarily of smoothing type; in fact $F$ can even be of differential type.}
We assume that $F$ is \emph{strongly monotone} in the sense that there exists a constant $c_F>0$ such that
\begin{equation}\label{strong-mono-F-C1H}
	c_F \norm{w-v}_\Omega^2 \leq \langle {F}[w]-{F}[v], w-v\rangle_{H^{-1}\times H_0^1}\quad \forall w,v\in H_0^1(\Omega).
\end{equation}
Note that although the domain of $F$ is $L^2(\Omega)$, the monotonicity condition~\eqref{strong-mono-F-C1H} is needed only for arguments in the smaller space $H^1_0(\Omega)$. 
{We refer the reader to~\cite[Example~1]{osborne2022analysis,osborne2024erratum} for several examples of operators $F$ that satisfy the above conditions.}
{The {strong monotonicity} condition~\eqref{strong-mono-F-C1H} can be seen as a quantitative version of the strict monotonicity condition on $F$ that is employed to prove uniqueness of the solution of the problem, {see e.g.}~\cite{lasry2007mean}, and to prove plain convergence of numerical approximations, {see e.g.}~\cite{osborne2022analysis,osborne2024erratum}}.

{In applications, the Hamiltonian $H$ is usually specified in terms of the controlled drift and running cost of the associated optimal control problem, so we assume that}
\begin{equation}\label{Hamiltonian}
	H(x,p)\coloneqq\sup_{\alpha\in\mathcal{A}}\left(b(x,\alpha)\cdot p-f(x,\alpha)\right),\quad \forall (x,p)\in\overline{\Omega}\times\mathbb{R}^d,
\end{equation} 
where we let the control set $\mathcal{A}$ denote a compact metric space, and we assume that both the control-dependent drift $b:\overline{\Omega}\times\mathcal{A}\to\mathbb{R}^d$ and the control-dependent component of the running cost $f:\overline{\Omega}\times\mathcal{A}\to\mathbb{R}$ are uniformly continuous.
The Hamiltonian $H$ is then convex w.r.t.\ $p$ uniformly in $x\in\overline{\Omega}$ and satisfies the following bounds 
\begin{subequations}\label{bounds}
	\begin{align} 
		|H(x,p)|&\leq C_H\left(|p|+1\right)&&\forall  (x,p)\in\overline{\Omega}\times\mathbb{R}^d,\label{bounds:H-linear-growth}\\ 
		|H(x,p)-H(x,q)|&\leq L_H|p-q|&&\forall(x,p,q)\in\overline{\Omega}\times\mathbb{R}^d\times\mathbb{R}^d,\label{bounds:lipschitz}
	\end{align} 
\end{subequations}
with
$C_H\coloneqq \max\left\{\|b\|_{C(\overline{\Omega}\times\mathcal{A};\mathbb{R}^d)},\|f\|_{C(\overline{\Omega}\times\mathcal{A})}\right\}$ and $ L_H\coloneqq\|b\|_{C(\overline{\Omega}\times\mathcal{A};\mathbb{R}^d)}.$ 
We assume that $H$ is differentiable w.r.t.\ $p$ {with continuous derivative $\frac{\partial H}{\partial p}\colon \Omega\times\mathbb{R}^d\rightarrow \mathbb{R}^d$ satisfying the Lipschitz condition}
\begin{equation}\label{eq:Hp_bound}
	\left|\frac{\partial H}{\partial p}(x,p) - \frac{\partial H}{\partial p}(x,q)\right|\leq L_{H_p}|p-q|\quad \forall (x,p,q)\in\Omega\times\mathbb{R}^d\times\mathbb{R}^d,
\end{equation}
for some constant $L_{H_p}\geq 0$.
{Furthermore, we note that~\eqref{bounds:lipschitz} implies that $\frac{\partial H}{\partial p}$ is uniformly bounded with}
\begin{equation}\label{h-deriv-linf-bound}
	\left|\frac{\partial H}{\partial p}(x,p)\right|\leq L_H \quad \forall (x,p)\in \Omega\times\mathbb{R}^d.
\end{equation}
It is clear that the mappings $H^1(\Omega)\ni v\mapsto H(\cdot,\nabla v)\in L^2(\Omega)$ and $H^1(\Omega)\ni v\mapsto \frac{\partial H}{\partial p}(\cdot,\nabla v)\in L^{\infty}(\Omega;\mathbb{R}^d)$ are Lipschitz continuous.
For given $v\in H^1(\Omega)$, we will often abbreviate these compositions by writing instead $H[\nabla v]\coloneqq H(\cdot,\nabla v)$ and $\frac{\partial H}{\partial p}[\nabla v]\coloneqq\frac{\partial H}{\partial p}(\cdot,\nabla v)$ a.e.\ in $\Omega$.

We conclude this section with a bound on the linearizations of the Hamiltonians when composed with gradients of functions in Sobolev spaces.
\begin{lemma}\label{lemma-semismooth-tech-result}
	For any $\epsilon>0$, there exists a $R>0$, depending only on $\epsilon$, $\Omega$, $L_H$ and $L_{H_p}$, such that 
	\begin{equation}\label{semismooth-bound-general}
		\left\|H[\nabla v] - H[\nabla w] - \frac{\partial H}{\partial p}[\nabla w]\cdot \nabla (v-w)\right\|_{H^{-1}(\Omega)} \leq \epsilon\|v - w\|_{H^1(\Omega)},
	\end{equation}
	whenever $v,w\in H^1(\Omega)$ satisfy $\|v- w\|_{H^1(\Omega)}\leq R$.
\end{lemma}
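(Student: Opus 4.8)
The plan is to represent the quantity inside the norm on the left-hand side of~\eqref{semismooth-bound-general} pointwise through a first-order Taylor expansion, and then to bound its $H^{-1}(\Omega)$-norm by splitting the domain according to the size of the gradient of the difference. Throughout, write $z\coloneqq v-w$ and let
\begin{equation*}
	\rho \coloneqq H[\nabla v] - H[\nabla w] - \frac{\partial H}{\partial p}[\nabla w]\cdot\nabla z \in L^2(\Omega)
\end{equation*}
denote the remainder, which belongs to $L^2(\Omega)$ by the stated hypotheses on $H$.

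First I would establish two pointwise bounds on $\rho$. Since $p\mapsto H(x,p)$ is continuously differentiable for a.e.\ $x$, the fundamental theorem of calculus along the segment joining $\nabla w(x)$ to $\nabla v(x)$ gives $H[\nabla v](x)-H[\nabla w](x)=\int_0^1 \frac{\partial H}{\partial p}(x,\nabla w + t\nabla z)\cdot\nabla z\,\dd t$ for a.e.\ $x\in\Omega$. Subtracting $\frac{\partial H}{\partial p}[\nabla w]\cdot\nabla z$ and invoking the Lipschitz bound~\eqref{eq:Hp_bound} yields the quadratic estimate $\abs{\rho(x)}\le \tfrac{1}{2}L_{H_p}\abs{\nabla z(x)}^2$ a.e., while the uniform bound~\eqref{h-deriv-linf-bound} gives the alternative linear estimate $\abs{\rho(x)}\le 2L_H\abs{\nabla z(x)}$ a.e. The essential feature is that neither bound depends on the base point $w$, so the estimate is uniform over all admissible pairs $(v,w)$.

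Next I would fix a threshold $\lambda>0$ and split $\Omega=\{\abs{\nabla z}\le\lambda\}\cup\{\abs{\nabla z}>\lambda\}$, bounding $\norm{\rho}_{H^{-1}(\Omega)}$ by the $H^{-1}$-norms of the two restrictions of $\rho$ to these sets. On the low-gradient set the quadratic bound gives $\abs{\rho}\le\tfrac{1}{2}L_{H_p}\lambda\abs{\nabla z}$, so this restriction is controlled in $L^2(\Omega)$ by $\tfrac12 L_{H_p}\lambda\norm{z}_{H^1(\Omega)}$, and hence in $H^{-1}(\Omega)$ by $C\lambda\norm{z}_{H^1(\Omega)}$ via the continuous embedding $L^2(\Omega)\hookrightarrow H^{-1}(\Omega)$. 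On the high-gradient set $A\coloneqq\{\abs{\nabla z}>\lambda\}$ I would use the linear bound together with the key observation that $A$ has small measure, $\abs{A}\le\norm{z}_{H^1(\Omega)}^2/\lambda^2$ by Chebyshev's inequality. Pairing the restriction against an arbitrary $\phi\in H_0^1(\Omega)$ and combining the Sobolev embedding $H^1_0(\Omega)\hookrightarrow L^p(\Omega)$ (with $p=2^*$ for $d\ge3$, and any fixed $p\in(2,\infty)$ for $d=2$) with Hölder's inequality on $A$ converts the small measure of $A$ into a gain of a positive power $\theta=\tfrac12-\tfrac1p$ of $\abs{A}$, producing a contribution of the superlinear form $C\lambda^{-2\theta}\norm{z}_{H^1(\Omega)}^{1+2\theta}$.

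The bound on the high-gradient set is the crux of the argument and the main obstacle: there is no integrability of $\nabla z$ beyond $L^2$ available, so the quadratic estimate cannot be used there, while the linear estimate alone is only $O(\norm{z}_{H^1(\Omega)})$ rather than $o(\norm{z}_{H^1(\Omega)})$. The resolution is precisely the norm gap between $L^2$ and the Sobolev exponent of the test functions: since $\norm{z}_{H^1(\Omega)}$ is small, the region where $\abs{\nabla z}$ exceeds $\lambda$ has small measure, and the extra integrability of $\phi\in H_0^1(\Omega)$ turns this into genuine superlinear smallness. To conclude, I would first choose $\lambda$ small enough that $C\lambda\le\eps/2$, and then, with $\lambda$ fixed, choose $R>0$ small enough that $C\lambda^{-2\theta}R^{2\theta}\le\eps/2$; summing the two contributions gives $\norm{\rho}_{H^{-1}(\Omega)}\le\eps\norm{z}_{H^1(\Omega)}$ whenever $\norm{z}_{H^1(\Omega)}\le R$. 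Since all the constants depend only on $\Omega$, $d$, $L_H$ and $L_{H_p}$, the resulting radius $R$ depends only on $\eps$, $\Omega$, $L_H$ and $L_{H_p}$, as required.
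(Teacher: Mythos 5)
Your argument is correct, and it rests on the same two ingredients as the paper's proof: the linear pointwise bound $\abs{\rho}\le 2L_H\abs{\nabla z}$ and the quadratic one $\abs{\rho}\le \tfrac12 L_{H_p}\abs{\nabla z}^2$ (the paper obtains the latter, without the factor $\tfrac12$, from convexity of $H$ rather than from the fundamental theorem of calculus), together with the observation that the $H^{-1}(\Omega)$-norm only tests the remainder against functions with better-than-$L^2$ integrability. Where you genuinely differ is in how the two pointwise bounds are combined. The paper interpolates them multiplicatively, $\abs{\rho}\le(2L_H)^{1-\gamma}L_{H_p}^{\gamma}\abs{\nabla z}^{1+\gamma}$ for $\gamma\in[0,1]$, chooses $\gamma$ so that $(1+\gamma)s_*<2$ for an exponent $s_*\in(1,2)$ with $L^{s_*}(\Omega)\hookrightarrow H^{-1}(\Omega)$, and thereby gets the single superlinear estimate $\norm{\rho}_{H^{-1}(\Omega)}\lesssim\norm{z}_{H^1(\Omega)}^{1+\gamma}$, from which an explicit radius $R=\bigl(\epsilon^{-1}\widetilde{C}_{\Omega}(2L_H)^{1-\gamma}L_{H_p}^{\gamma}\bigr)^{-1/\gamma}$ is read off. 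You instead truncate at a level $\lambda$, use the quadratic bound on $\{\abs{\nabla z}\le\lambda\}$ and the linear bound combined with Chebyshev and H\"older on the complement, and then tune $\lambda$ and $R$ sequentially. The two routes are essentially a ``multiplicative'' versus a ``layer-cake'' implementation of the same interpolation between the linear and quadratic regimes: yours is marginally more elementary in that it avoids fractional powers of the pointwise bounds, while the paper's yields a cleaner quantitative outcome, namely a single power-type bound valid for all $v,w\in H^1(\Omega)$ and hence a closed-form expression for $R$. Your bookkeeping is sound (the superlinear gain $\lambda^{-2\theta}\norm{z}_{H^1(\Omega)}^{1+2\theta}$ with $\theta=\tfrac12-\tfrac1p>0$ is exactly what is needed), and the final two-step choice of $\lambda$ and then $R$ delivers the claimed dependence of $R$ on $\epsilon$, $\Omega$, $L_H$ and $L_{H_p}$.
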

\begin{proof}
	To alleviate the notation, let $R_H[\nabla v,\nabla w]\coloneqq H[\nabla v]-H[\nabla w]-\frac{\partial H}{\partial p}[\nabla w]\cdot \nabla(v-w)$ for any $v,\,w \in H^1(\Omega)$. {We note that,  for every $v,w\in H^1(\Omega)$, $R_H[\nabla v,\nabla w]$ is well-defined as a Lebesgue measurable function on $\Omega$ since $H:\overline{\Omega}\times\mathbb{R}^d\to\mathbb{R}$ and $\frac{\partial H}{\partial p}:\Omega\times\mathbb{R}^d\to\mathbb{R}^d$ are both continuous, and the linear growth property \eqref{bounds:H-linear-growth} and {the uniform bound}~\eqref{h-deriv-linf-bound} imply that $R_H[\nabla v,\nabla w]\in L^2(\Omega)$.}
	Define the real-number ${s}_*{\in (1,2)}$ by ${s}_*=3/2$ if $d=2$ and ${s}_*\coloneqq 
		\frac{2d}{d+2}$ if $d\geq 3$.
	Note that the Sobolev embedding theorem shows that $H_0^1(\Omega)$ is continuously embedded in $L^{{s}_*'}(\Omega)$ where $1/{s}_*+1/{s}_*'=1$.
		It then follows from the H\"older inequality that $L^{s_*}(\Omega)$ is continuously embedded in $H^{-1}(\Omega)$ and thus  
	\begin{equation}\label{H-res-Sob-embed}
		\norm{R_H[\nabla v,\nabla w] }_{H^{-1}(\Omega)}\leq C_\Omega \norm{R_H[\nabla v,\nabla w]}_{L^{s_*}(\Omega)}
	\end{equation}
	for all $v$ and $w$ in $H^{1}(\Omega)$, where {we note that} the embedding constant $C_\Omega$ depends only on $\Omega$ {and $R_H[\nabla v,\nabla w]\in L^{s_*}(\Omega)$ since $L^2(\Omega)\subset L^{s_*}(\Omega)$. We now derive an upper bound for the term $\norm{R_H[\nabla v,\nabla w]}_{L^{s_*}(\Omega)}$ as follows.}
	
	{Let $\gamma\in [0,1]$, $x\in \Omega$ and $p,q\in \mathbb{R}^d$ be given. The Mean Value Theorem implies the existence of $\theta_{x,p,q}\in (0,1)$ such that 
	\begin{multline}\label{H-MVT-app}
		H(x,p)-H(x,q) - \frac{\partial H}{\partial p}(x,q)\cdot (p-q)
		\\
		=\left(\frac{\partial H}{\partial p}(x,\theta_{x,p,q}p+(1-\theta_{x,p,q})q)-\frac{\partial H}{\partial p}(x,q)\right)\cdot (p-q).
	\end{multline}
	Since $\theta_{x,p,q}\in (0,1)$, the above identity \eqref{H-MVT-app}, the Lipschitz condition \eqref{eq:Hp_bound} and the uniform bound \eqref{h-deriv-linf-bound} together imply that
	\begin{multline}\label{H-res-pre-bound}
		\left|H(x,p)-H(x,q) - \frac{\partial H}{\partial p}(x,q)\cdot (p-q)\right|
		\\\leq (2L_H)^{1-\gamma}L_{H_p}^{\gamma}|p-q|^{\gamma+1}\quad\forall (p,q)\in\mathbb{R}^{d}\times\mathbb{R}^d,\text{ }\forall x\in\Omega
	\end{multline}
	and for any $\gamma\in [0,1]$. Consequently, for any $v,w\in H^1(\Omega)$ and $\gamma\in [0,1]$, we have
	\begin{equation}\label{R_H-a-e-bound}
		{\abs{R_H[\nabla v,\nabla w]}}\leq (2L_H)^{1-\gamma}L_{H_p}^{\gamma}|\nabla(v-w)|^{\gamma+1}
	\end{equation}
	a.e.\ in $\Omega$.
	Hence, we obtain from \eqref{R_H-a-e-bound} the bound 
	\begin{equation}\label{H-res-pre-bound-2}
		{\norm{R_H[\nabla v,\nabla w]}_{L^{s_*}(\Omega)}} \leq (2L_H)^{1-\gamma}L_{H_p}^{\gamma} {\norm{\nabla(v-w)}_{L^{(1+\gamma)s_*}(\Omega)}^{1+\gamma}}.
	\end{equation}}
	
	We now choose $\gamma=1/9$ if $d=2$ and $0<\gamma<2/d$ if $d\geq 3$, which ensures that $(\gamma+1){s}_*\in (1,2)$.
	{Thus, by applying the} {H\"older} {inequality we obtain
	\begin{multline}
		{\norm{\nabla(v-w)}_{L^{(1+\gamma)s_*}}^{1+\gamma}}=\left(\int_{\Omega}|\nabla (v-w)|^{(1+\gamma)s_*}\mathrm{d}x\right)^{\frac{1}{s_*}}\\
		\leq\left(\int_{\Omega}1\, \mathrm{d}x\right)^{\frac{1}{s_*}-\frac{1+\gamma}{2}}{\norm{\nabla(v-w)}_{L^{2}(\Omega)}^{1+\gamma}},
	\end{multline}}
	{which, together with \eqref{H-res-Sob-embed} and \eqref{H-res-pre-bound-2}, implies}
	\begin{equation}\label{semismooth-bound-alt-alt}
		\begin{split}
			\left\|R_H[\nabla v,\nabla w]\right\|_{H^{-1}(\Omega)} 
			\leq {\widetilde{C}_{\Omega,d}}(2L_H)^{1-\gamma}L_{H_p}^{\gamma}\|v-w\|_{H^1(\Omega)}^{\gamma+1},
		\end{split} 
	\end{equation}
	for some constant {$\widetilde{C}_{\Omega}$} depending only on $\Omega$ {and $d$}.
	Then, for any $\epsilon>0$, it is then clear that~\eqref{semismooth-bound-general} holds whenever $\norm{v-w}_{H^1(\Omega)}\leq R$ for $R\coloneqq \left(\epsilon^{-1}\widetilde{C}_{\Omega,d}(2L_H)^{1-\gamma}L_{H_p}^{\gamma}\right)^{-1/\gamma}.$ 
\end{proof}
\begin{remark}[{Semismoothness of the Hamiltonian}]
{Lemma~\ref{lemma-semismooth-tech-result} can be seen as a special case (with a simpler proof) of a more general result on the semismoothness of Hamilton--Jacobi--Bellman operators on function spaces, which was first shown in~\cite[Theorem~13]{SmearsSuli2014}, where differentiability of the Hamiltonian is not required.}
\end{remark}
\subsection{Continuous Problem}
The weak form of \eqref{sys} is to find	$(u,m)\in H_0^1(\Omega)\times H_0^1(\Omega)$ {that} satisfies 
\begin{subequations}\label{weakform-C1H}
	\begin{gather}
		\int_{\Omega}\nu\nabla u\cdot\nabla \psi+H[\nabla u]\psi \text{ }\mathrm{d}x=\langle F[m],\psi\rangle_{H^{-1}\times H_0^1}, \label{weakform1-C1H}\\ 
		\int_{\Omega}\nu\nabla m\cdot\nabla \phi+m\frac{\partial H}{\partial p}[\nabla u]\cdot\nabla \phi \text{ }\mathrm{d}x=\langle  G,\phi\rangle_{H^{-1}\times H_0^1},\label{weakform2-C1H} 
	\end{gather} 
\end{subequations}
for all $\psi,\phi\in H_0^1(\Omega)$. 
\begin{remark}[Existence and uniqueness of weak solutions]\label{rem-continuous-pb-basic-properties-C1H}
	By \cite[Theorem 3.3, Theorem 3.4]{osborne2022analysis,osborne2024erratum}, there exists a unique solution {$(u,m)$} to the continuous problem \eqref{weakform-C1H}, {and we have the bounds}
	\begin{gather}
		\|m\|_{H^1(\Omega)} \lesssim \|G\|_{H^{-1}(\Omega)},\label{continuous-estimates-m}
		\\
		\|u\|_{H^1(\Omega)} \lesssim 1+\|G\|_{H^{-1}(\Omega)}+\|f\|_{C({\overline{\Omega}\times\mathcal{A}})}, \label{continuous-estimates-u}
	\end{gather}
	where the hidden constants depend only on $d$, $\Omega$, $\nu$, $L_H$,  and $L_F$.
	Furthermore, the density function $m$ is nonnegative a.e.\ in $\Omega$ owing to the assumption that $G$ is nonnegative in the sense of distributions.
\end{remark}

\begin{remark}[Essential boundedness of the density function]\label{rem-m-Linf-bound}
	We have assumed above that the source term $G$ has the form  $G=g_0-\nabla \cdot\tilde{g}$ with $g_0\in L^{q/2}(\Omega)$ and $\tilde{g}\in L^q(\Omega;\mathbb{R}^d)$ for some $q>d$.
	This allows us to apply~\cite[Theorem 8.15]{gilbarg2015elliptic} which shows that the density function $m$ is essentially bounded in $\Omega$ and satisfies $\|m\|_{L^{\infty}(\Omega)}\lesssim \|m\|_{\Omega}+\|g_0\|_{L^{q/2}(\Omega)}+\|\tilde{g}\|_{L^q(\Omega;\mathbb{R}^d)}$ where the hidden constant depends only on $d,\Omega, L_H$, $q$ and $\nu$.
	In fact, the $H^1$-norm bound~\eqref{continuous-estimates-m} given in Remark~\ref{rem-continuous-pb-basic-properties-C1H} further implies {that {there exists a constant $\Mfty$ of the form $\Mfty=C\left(\|G\|_{H^{-1}(\Omega)}+\|g_0\|_{L^{q/2}(\Omega)}+\|\tilde{g}\|_{L^q(\Omega;\mathbb{R}^d)}\right) $}, with a constant $C$ depending only on $d,\Omega, L_H,\nu$, $q$ and $L_F$,} such that
	\begin{equation}\label{m-ess-bound}
		\|m\|_{L^{\infty}(\Omega)}\leq {\Mfty}.
	\end{equation} 
	{We will use~\eqref{m-ess-bound} in the analysis that follows.}
\end{remark}

\section{Finite Element {Discretization}}\label{sec:FEM}
\paragraph{\emph{Meshes.}}
We let $\{\mathcal{T}_k\}_{k\in\mathbb{N}}$ denote a sequence of conforming simplicial meshes of the domain $\Omega$ (c.f.\ \cite[p.~51]{Ciarlet1978}) that are nested, which is to say that each element in $\mathcal{T}_{k+1}$ is either in $\Tk$ or is a subdivision of an element in $\Tk$. 
{Note that the nestedness assumption will not be used explicitly in the analysis in this paper, but rather only implicitly in order to apply some existing results from earlier works, see Remark~\ref{rem:aprioriconvergence} below.}
For each $k\in\N$ and each element $K\in\mathcal{T}_k$, let $\diam K$ denote the diameter of $K$. 
We define the mesh-size function $h_{\Tk}\in L^\infty(\Omega)$ by $h_{\Tk}|_K\coloneqq\diam K$ for each element $K\in\Tk$. 
For each $K\in\Tk$, let $\rho_K$ denote the radius of the largest inscribed ball in $K$.
We assume that the sequence of meshes $\{\Tk\}_{k\in\N}$ is shape-regular, i.e.\ that there exists a $\delta>1$, independent of $k\in\mathbb{N}$, such that $h_{\Tk}|_K\leq \delta \rho_{K}$ for all $K\in\Tk$ and for all $k\in\N$.
We let $h_k\coloneqq \norm{h_{\Tk}}_{L^\infty(\Omega)}$ denote the maximum element {diameter} in $\Tk$, which we assume satisfies $h_k\to 0$ as $k\to \infty$. 
\medskip

\paragraph{\emph{Vertices and edges.}}
For each $k\in \N$, we let $\calVk$ denote the set of all vertices of the mesh $\Tk$. We denote the set of vertices of $\mathcal{T}_k$ that lie in the interior of the domain $\Omega$ by $\calVki=\calVk\cap \Omega$. 
Let $\{x_i\}_{i=1}^{\Card\calVk}$ denote an enumeration of $\calVk$ which, without loss of generality, we assume to be ordered in such a way that $x_i\in \calVki$ if and only if $1\leq i\leq M_k\coloneqq \Card\calVki$.
We call two distinct vertices in $\calVk$ neighbours if they belong to  a common element of $\Tk$. The set of neighbouring vertices of a given vertex $x_i$ is denoted by $\calVkx$.

Given $k\in\mathbb{N}$, the collection of all closed line segments formed by all pairs of neighbouring vertices of the mesh $\mathcal{T}_k$ will be denoted by $\calE_k$. 
{Note that in two space dimensions, edges and faces of the mesh coincide, whereas they are distinct in three space dimensions and above.}
Thus, for each $k\in\mathbb{N}$, $\calE_k$ is simply the set of edges of the mesh $\mathcal{T}_k$. For each edge $E\in\mathcal{E}_k$, we let the set of elements of $\Tk$ containing~$E$ be denoted by $\Tke\coloneqq \{K\in\mathcal{T}_k:E\subset K\}$. An edge $E\in\calE_k$ is called an internal edge if there exists at least one vertex $x_*\in \mathcal{V}_{k,\Omega}$ such that $x_*\in E$. We denote the set of all internal edges by $\calEki$. For each $1\leq i\leq \text{card }\mathcal{V}_k$, we let $\mathcal{E}_{k,i}\coloneqq\{E\in\mathcal{E}_k:x_i\in E\}$ denote the set of edges containing the vertex $x_i\in \calVk$. Given a simplex $K\in\mathcal{T}_k$ we define the set
$\mathcal{E}_K\coloneqq \{E\subset K:E\in\mathcal{E}_{k,\Omega}\}$ which is the collection of edges of $K$ that are internal edges. 
\medskip

\paragraph{\emph{Approximation space.}}
Let $k\in\mathbb{N}$ be given. We let $\{ \xi_i \}_{i=1}^{\Card\calVk}$ denote the standard nodal Lagrange basis for the space of all continuous piecewise affine functions on $\overline{\Omega}$ with respect to $\mathcal{T}_k$, where $\xi_i(x_j)=\delta_{ij}$ for all $i,\,j \in \{1,\dots,\Card\calVk\}$ for the chosen enumeration $\{x_i\}_{i=1}^{\calVk}$ of $\calVk$. 
Note that $\{\xi_i\}_{i=1}^{\Card\calVk}$ form a partition of unity on $\Omega$.
As there is no risk of confusion, we omit the dependence of the nodal basis on the index $k$ of the mesh in the notation.

For each element $K\subset\R^d$, let the vector space of $d$-variate real-valued polynomials on $K$ of total degree at most one be denoted by $\mathcal{P}_1(K)$. For each $k\in\mathbb{N}$, we let the finite element space $V_k$ of $H^1_0$-conforming piecewise affine functions be defined by
\begin{equation}
	V_k \coloneqq \{v_k \in H^1_0(\Omega):\; v_k|_K \in \mathcal{P}_1(K) \quad \forall K\in\mathcal{T}_k \}.
\end{equation}
We let the space $V_k$ inherit the standard norm on $H_0^1(\Omega)$, and we denote the norm by $\|\phi_k\|_{V_k}\coloneqq \|\phi_k\|_{H^1(\Omega)}$ for $\phi_k \in V_k$.
Note that $\{\xi_i\}_{i=1}^{M_k}$ is the standard nodal basis of $V_k$ since  $x_i \in \calVki$ if and only if $1\leq i\leq M_k\coloneqq \Card\calVki$. Note also that the union $\cup_{k\in\mathbb{N}}V_k$ is a dense subspace of $H_0^1(\Omega)$. Furthermore, the space of continuous linear functionals on $V_k$ is denoted by $V_k^*$, where we let the duality pairing between $V_k$ and $V_k^*$ be denoted by $\langle\cdot,\cdot\rangle_{V_k^*\times V_k}$. 
We equip $V_k^*$ with the {dual norm $\norm{\cdot}_{V_k^*}$} defined by
\begin{equation}
	\|g\|_{V_k^*}\coloneqq \sup_{\substack{\phi_k\in V_k:\\ \|\phi_k\|_{H^1(\Omega)}=1}}\langle g,\phi_k\rangle_{V_k^*\times V_k}\quad\forall g\in V_k^*.
\end{equation}
Given an operator $L:V_k\to V_k^*$ we define its adjoint operator $L^*:V_k\to V_k^*$ by the pairing $\langle L^*w_k, v_k\rangle_{V_k^*\times V_k}\coloneqq\langle L v_k, w_k\rangle_{V_k^*\times V_k}$ for all $w_k,v_k\in V_{k}$.  

\subsection{General stabilized methods}\label{sec:general_method}
In order to accommodate a range of stabilized methods such as those in~\cite{osborne2023finite,osborne2022analysis,osborne2024erratum}, we consider the class of discretizations of~\eqref{weakform-C1H} of the following form: \emph{given $k\in\mathbb{N}$, find $(u_k,m_k)\in V_{k}\times V_{k}$ such that}
	\begin{subequations}\label{weakdisc-C1H}
		\begin{align}
			\int_{\Omega}A_k\nabla u_k\cdot\nabla \psi_k+H[\nabla u_k]\psi_k\text{ }\mathrm{d}x &=\langle F[m_k],\psi_k\rangle_{H^{-1}\times H_0^1}, \label{weakformdisc1}
			\\
			\int_{\Omega}A_k\nabla m_k\cdot\nabla \phi_k+m_k\frac{\partial H}{\partial p}[\nabla u_k]\cdot\nabla \phi_k\text{ }\mathrm{d}x &=\langle G,\phi_k\rangle_{H^{-1}\times H_0^1}, \label{weakformdisc2}
		\end{align}
	\end{subequations}
\emph{for all $\psi_k,\phi_k\in V_{k}$}. 
Here $A_k \in L^\infty\left(\Omega;\Rddsym\right)$, where $\Rddsym$ denotes the space of symmetric matrices in $\R^{\dim\times\dim}$, is a numerical diffusion tensor defined as
	\begin{equation}\label{eq:Ak_def}
		A_k \coloneqq \nu \mathbb{I}_d+D_k,
	\end{equation}
where $\mathbb{I}_d$ denotes the $\dim\times\dim$ identity matrix, and $D_k \in L^\infty\left(\Omega;\Rddsymp\right)$, where $\Rddsymp$ denotes the cone of positive semi-definite matrices in $\Rddsym$, is a stabilization term determined by the choice of method. In this work, we will consider two prime examples of choices of $D_k$, namely the construction from~\cite{osborne2022analysis,osborne2024erratum} for the case of strictly acute meshes, and the construction from~\cite{osborne2023finite} for more general meshes that satisfy the Xu--Zikatanov (XZ) condition, see Sections~\ref{sec:XZ_meshes} and~\ref{sec:acute_meshes} below. 

In order to give a unified analysis of the different stabilization methods, we formulate the main features required for the analysis in the following two assumptions, which we verify in the two examples of meshes satisfying the XZ condition and of strictly acute meshes in the following subsections.
Our first main assumption ensures that the stabilization term is of optimal order:
	\begin{enumerate}[label={(H\arabic*)}]
		\item The matrix-valued function $D_k\in L^\infty\left(\Omega;\Rddsymp\right)$ for all $k\in\mathbb{N}$ and there exists a constant $\CD$, independent of $k$, such that $\abs{D_k} \leq \CD h_{\Tk}$ a.e.\ in $\Omega$, for all $k\in\N$.\label{ass:bounded}
	\end{enumerate}
	
{Our second main assumption ensures that the scheme \eqref{weakdisc-C1H} is monotone, due to the satisfaction of a discrete maximum principle by the discretizations.} 
Recall that a linear operator $L:V_k\to V_k^*$ is said to satisfy the \emph{discrete maximum principle} (DMP) provided that the following condition holds: if $v_k\in V_{k}$ and $\langle Lv_k,\xi_i\rangle_{V_k^*\times V_k}\geq 0$ for all $ i\in\{1,\cdots,M_k\}$, then $v_k\geq 0$ in $\Omega$. For each $k\in\mathbb{N}$, let $W(V_k,\Dk)$ denote the set of all linear operators $L:V_{k}\to V_{k}^*$ of the form 
	\begin{equation}\label{L-operator}
		\langle L v_k, w_k\rangle_{V_k^*\times V_k}\coloneqq \int_{\Omega}{A_k}\nabla v_k\cdot\nabla w_k+\tilde{b}\cdot\nabla v_k w_k\,\mathrm{d}x\quad \forall w_k,v_k\in V_k,
	\end{equation}
where $A_k$ is given by~\eqref{eq:Ak_def}, and where $\tilde{b} \in L^\infty(\Omega;\R^d)$ is a Lebesgue measurable vector field that satisfies $\|\tilde{b}\|_{L^{\infty}(\Omega;\R^d)}\leq L_H$. 
Then, the second main assumption ensures a discrete maximum principle:
	\begin{enumerate}[label={(H\arabic*)},resume]
		\item For every $k\in\mathbb{N}$, each $L\in W(V_k,\Dk)$ and its adjoint $L^*$ satisfy the discrete maximum principle.\label{ass:dmp}
	\end{enumerate}
{This assumption leads to two important consequences which show that the scheme \eqref{weakdisc-C1H} is monotone. Firstly, the DMP ensured by \ref{ass:dmp} provides a discrete counterpart of the Weak Maximum Principle (see e.g.\ \cite[Theorem 8.1]{gilbarg2015elliptic}), which allows one to deduce the uniqueness of solutions to the discrete HJB equation \eqref{weakformdisc1} for each arbitrary source term in $H^{-1}(\Omega)$. The uniqueness of solutions to the discrete HJB equation is} {shown} {by study of its linearizations, which yield operators of the form \eqref{L-operator} (c.f.\ proof of \cite[Lemma 6.3]{osborne2022analysis,osborne2024erratum}).}
{We note that finite element discretizations of HJB equations with DMP have been studied in the earlier works~\cite{JensenSmears2013,Jensen2017}, where the convergence of the approximations to the viscosity solution was shown for classes of degenerate fully nonlinear HJB and Isaacs equations.}
	
{Secondly, \ref{ass:dmp} is useful in ensuring that the approximations for the density function given by \eqref{weakdisc-C1H} are nonnegative everywhere without requiring \emph{a priori} knowledge of the vector field $\frac{\partial H}{\partial p}[\nabla u_k]$ beyond its satisfaction of a suitable $L^{\infty}$-bound. Indeed, we can rewrite \eqref{weakformdisc2} in the form $\langle L_k^*m_k,\phi\rangle_{V_k^*\times V_k}=\langle G,\phi\rangle_{H^{-1}\times H^1_0}$ where $L_k$ is the operator of the form \eqref{L-operator}, with $\tilde{b}=\frac{\partial H}{\partial p}[\nabla u_k]$, because $\|\tilde{b}\|_{L^{\infty}(\Omega;\R^d)}\leq L_H$ holds by \eqref{h-deriv-linf-bound}. Since the nonnegativity of $G$ in $H^{-1}(\Omega)$ also holds} {by hypothesis,} {we conclude from the DMP that $m_k\geq 0$ in $\Omega$.}  

The assumptions~\ref{ass:bounded} and \ref{ass:dmp} are verified below in Lemmas~\ref{lemma-DMP-vol-stabilization} and~\ref{lemma-DMP-art-diff-stabilization} for the stabilizations of~\cite{osborne2022analysis,osborne2024erratum,osborne2023finite}. Among the main consequences of these assumptions, there is a uniform stability bound for the class operators $W(V_k,\Dk)$ of the form given in~\eqref{L-operator}.
\begin{lemma}\label{lemma-uniform-L-inv-bound-disc}
	{Assume~\ref{ass:bounded} and~\ref{ass:dmp}.}
	Then,  for every  $k\in\mathbb{N}$, each operator $L\in W(V_k,D_k)$ and its adjoint $L^*$ are invertible. Moreover, {there exists a constant $\Cstab$ independent of $k\in\N$ such that}
	\begin{equation}\label{discrete_estimates-2} 
		\sup_{k\in\mathbb{N}}\sup_{L\in W(V_k,D_k)}\max\left\{\left\|L^{-1}\right\|_{\mathcal{L}\left(V_{k}^*,V_{k}\right)},\left\|{L^*}^{-1}\right\|_{\mathcal{L}\left(V_{k}^*,V_{k}\right)}\right\}\leq {\Cstab.}
	\end{equation}
\end{lemma}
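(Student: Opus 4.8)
The plan is to deduce invertibility directly from the discrete maximum principle and then to prove the uniform bound~\eqref{discrete_estimates-2} by a compactness-and-contradiction argument, exploiting that the principal part stays uniformly elliptic while the stabilization $D_k$ vanishes in the limit by~\ref{ass:bounded}. First, for the invertibility, fix $k$ and $L\in W(V_k,D_k)$: if $Lv_k=0$ then $\langle Lv_k,\xi_i\rangle=0$ at every interior vertex, so applying~\ref{ass:dmp} to both $v_k$ and $-v_k$ gives $v_k\ge0$ and $v_k\le0$, hence $v_k=0$; thus $L$ is injective on the finite-dimensional space $V_k$ and therefore invertible, and likewise for $L^*$. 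Since $L^*$ is the Banach-space adjoint of $L$ under the canonical isometric identification of $V_k$ with its bidual, one has $\norm{(L^*)^{-1}}_{\mathcal L(V_k^*,V_k)}=\norm{L^{-1}}_{\mathcal L(V_k^*,V_k)}$, so it suffices to bound $\norm{L^{-1}}_{\mathcal L(V_k^*,V_k)}$ uniformly.

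The workhorse estimate is an energy bound: testing $Lv_k=g$ with $v_k$ and using $D_k\in\Rddsymp$, $\norm{\tilde b}_{L^\infty(\Omega;\R^d)}\le L_H$, Young's inequality and Poincar\'e's inequality gives $\norm{v_k}_{H^1(\Omega)}\lesssim\norm{g}_{V_k^*}+\norm{v_k}_\Omega$, uniformly in $k$ and $L$. It therefore remains only to control $\norm{v_k}_\Omega$ by $\norm{g}_{V_k^*}$, for which I would argue by contradiction. For each fixed $k$ the set $W(V_k,D_k)$ is compact --- being the image of the weak-$*$ compact ball $\{\norm{\tilde b}_{L^\infty}\le L_H\}$ under a continuous map into the space of operators on $V_k$ --- and consists of invertible operators, so $\sup_{L\in W(V_k,D_k)}\norm{L^{-1}}$ is finite. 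Hence if~\eqref{discrete_estimates-2} were to fail there would exist $k_n\to\infty$, operators $L_n$ with fields $\tilde b_n$ and tensors $D_n$, and $v_n\in V_{k_n}$ with $\norm{v_n}_{H^1(\Omega)}=1$ and $g_n\coloneqq L_nv_n$ satisfying $\norm{g_n}_{V_{k_n}^*}\to0$.

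Passing to a subsequence, $v_n\rightharpoonup v$ in $H_0^1(\Omega)$ and $v_n\to v$ in $L^2(\Omega)$ by compactness, $\tilde b_n\rightharpoonup\tilde b$ weak-$*$ in $L^\infty(\Omega;\R^d)$, and $D_n\to0$ in $L^\infty$ by~\ref{ass:bounded}. The energy bound keeps $\norm{v_n}_\Omega$ bounded below, so $v\neq0$. The key step is to upgrade this to strong convergence $\nabla v_n\to\nabla v$ in $L^2(\Omega)$: testing $L_nv_n=g_n$ with $v_n-\Pi_n v\in V_{k_n}$, where $\Pi_n$ is a quasi-interpolant satisfying $\Pi_n v\to v$ in $H^1(\Omega)$, the convection contribution is $O(\norm{v_n-\Pi_n v}_\Omega)\to0$, the stabilization contribution is $O(h_{k_n})\to0$ by~\ref{ass:bounded}, and the data term tends to zero, which leaves $\norm{\nabla v_n}_\Omega^2\to\norm{\nabla v}_\Omega^2$ and hence strong convergence.

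The hard part is then passing to the limit in the convection term, which is not weakly continuous in $v_n$; strong $H^1$-convergence is exactly what makes this possible. Testing against interpolants $\phi_n\to\phi$ of $\phi\in C_c^\infty(\Omega)$, the diffusion term converges to $\nu\int\nabla v\cdot\nabla\phi$, the stabilization term vanishes by~\ref{ass:bounded}, and $\int\tilde b_n\cdot\nabla v_n\,\phi_n\to\int\tilde b\cdot\nabla v\,\phi$ because $\tilde b_n\rightharpoonup\tilde b$ weak-$*$ pairs against $\nabla v_n\,\phi_n\to\nabla v\,\phi$ in $L^1(\Omega)$. Thus $v\in H_0^1(\Omega)$ is a nontrivial weak solution of $-\nu\Delta v+\tilde b\cdot\nabla v=0$ with $\tilde b\in L^\infty$; but this operator is uniformly elliptic with no zeroth-order term, so the weak maximum principle forces $v=0$, a contradiction. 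This establishes the uniform bound on $\norm{L^{-1}}$, and the adjoint bound follows from the identity noted above. I expect the strong-convergence step and the limit passage in the convection term to be the main obstacle, with~\ref{ass:dmp} entering only to secure invertibility, and hence the finiteness of the fixed-$k$ suprema.
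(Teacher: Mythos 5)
Your proof is correct, and it takes essentially the same route as the paper's: the paper proves this lemma simply by citing \cite[Lemma~6.2]{osborne2022analysis} and observing that the argument there needs only the three ingredients you use, namely positive semi-definiteness of $\Dk$ (for the G\aa rding-type energy bound), the discrete maximum principle~\ref{ass:dmp} (for invertibility at each fixed $k$), and $\Dk\to 0$ in $L^\infty$ (for the compactness-and-contradiction limit passage to a nontrivial solution of the homogeneous advection--diffusion problem, excluded by the weak maximum principle). Your write-up is a complete, self-contained version of that cited argument, with the adjoint bound correctly reduced to the primal one via the finite-dimensional duality identity.
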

\begin{proof}
	The result is proved in~\cite[Lemma 6.2]{osborne2022analysis,osborne2024erratum} for the case of strictly acute meshes.
	The argument in the proof of~\cite[Lemma 6.2]{osborne2022analysis,osborne2024erratum} only requires that  $D_k$ is positive semi-definite a.e.\ in $\Omega$ for all $k\in\N$, { that~\ref{ass:dmp} holds,} and that $\{D_k\}_{k\in\mathbb{N}}$ vanishes in $L^{\infty}(\Omega;\mathbb{R}^{d\times d})$ in the limit as $k\to\infty$. {It is then clear that the result extends to the current setting under the assumptions~\ref{ass:bounded} and \ref{ass:dmp}.}
\end{proof}
{In the analysis later on, we will often employ the fact that the uniform stability bound \eqref{discrete_estimates-2} can be written equivalently as inf-sup stability bounds for operators in $W(V_k,D_k)$, $k\in\mathbb{N}$, as follows: for each $k\in\mathbb{N}$ and $L\in W(V_k,D_k)$ there hold
\begin{subequations}
	\begin{align} 
	\Cstab^{-1}\leq \inf_{v_k\in V_k\backslash\{0\}}\sup_{\substack{\psi_k\in V_k:\\ \|\psi_k\|_{H^1(\Omega)}=1}}\frac{\langle Lv_k,\psi_k\rangle_{V_k^*\times V_k}}{\|v_k\|_{H^1(\Omega)}},\label{inf-sup-op-bound-1}
	\\
	\Cstab^{-1}\leq \inf_{w_k\in V_k\backslash\{0\}}\sup_{\substack{\phi_k\in V_k:\\ \|\phi_k\|_{H^1(\Omega)}=1}}\frac{\langle L^*w_k,\phi_k\rangle_{V_k^*\times V_k}}{\|w_k\|_{H^1(\Omega)}},\label{inf-sup-op-bound-2}
\end{align}
\end{subequations}
or equivalently
\begin{subequations}
	\begin{align} 
		\|v_k\|_{H^1(\Omega)}\leq \Cstab\sup_{\substack{\psi_k\in V_k:\\ \|\psi_k\|_{H^1(\Omega)}=1}}\langle Lv_k,\psi_k\rangle_{V_k^*\times V_k}\quad\forall v_k\in V_k,\label{inf-sup-op-bound-1-equiv}
		\\
		\|w_k\|_{H^1(\Omega)}\leq\Cstab\sup_{\substack{\phi_k\in V_k:\\ \|\phi_k\|_{H^1(\Omega)}=1}}{\langle L^*w_k,\phi_k\rangle_{V_k^*\times V_k}}\quad\forall w_k\in V_k.\label{inf-sup-op-bound-2-equiv}
	\end{align}
\end{subequations}
}
\begin{remark}[Dependencies of $\Cstab$]
		It is clear that the constant $\Cstab$ appearing in~\eqref{discrete_estimates-2} will generally depend on $d$, $\Omega$, $\delta$, $\nu$, $L_H$, $\CD$, etc. However, since the constant $\Cstab$ appears frequently in the following analysis, it will often be simpler and clearer for us to state how other constants depend on $\Cstab$, rather than on these other more primitive quantities.
\end{remark}

\begin{remark}[Well-posedness of discrete approximations and convergence]\label{rem:aprioriconvergence}
	{The existence and uniqueness of the numerical solutions is shown in~\cite[Theorems~5.2 \&~5.3]{osborne2022analysis,osborne2024erratum} for the case of strictly acute meshes. Following the arguments of the proofs in these results, it is straightforward to show that existence and uniqueness of the discrete solution also holds for the abstract class of methods considered here of the form~\eqref{weakdisc-C1H} under the assumptions~\ref{ass:bounded} and~\ref{ass:dmp}.}
	{Moreover, in the current setting where $H$ is continuously differentiable, it is also straightforward to extend~\cite[Theorem 5.4, Corollary 5.5]{osborne2022analysis,osborne2024erratum} on the convergence of the method to deduce} that $\{u_k\}_{k\in\mathbb{N}}$, $\{m_k\}_{k\in\mathbb{N}}$ are uniformly bounded in the $H^1$-norm with $u_k\to u$ and $m_k\to m$ in $H_0^1(\Omega)$ as $k\to\infty$. 
	{We note that we will use these results on plain convergence of the method in the analysis that comes below. Since the analysis from~\cite{osborne2022analysis,osborne2024erratum} assumes that the finite element spaces are nested, we therefore make the same assumption here.}
\end{remark}

\paragraph{\emph{Vanishing stabilization.}}
In some cases, it is possible to design the numerical methods such that stabilization becomes unnecessary once the mesh is sufficiently fine, i.e.\ one can satisfy~\ref{ass:dmp} whilst also satisfying
	\begin{equation}\label{eq:vanishing_stabilization}
		D_k \equiv 0 \quad\text{in } \Omega \quad \forall k\geq k^*,
	\end{equation}
for some $k^*\in\mathbb{N}$ sufficiently large. Note that~\eqref{eq:vanishing_stabilization} trivially implies~\ref{ass:bounded}.
An important example where~\eqref{eq:vanishing_stabilization} holds is the case of sequences of strictly acute meshes, see Section~\ref{sec:acute_meshes} below. Situations where the stabilization vanishes are of special interest since we will show below that these lead to the full quasi-optimality of error without additional terms in the error bounds originating from stabilization, see Remark~\ref{rem:full_optimality} below.

\subsection{Xu--Zikatanov (XZ) condition~\cite{xu1999monotone}}\label{sec:XZ_meshes}

\paragraph{\emph{Formulation of the condition.}} Let $K\in\mathcal{T}_k$ be given. For each vertex $x_i\in K$, we let $F_{K,i}$ denote the convex hull of all vertices of $K$ except $x_i$, i.e.\ $F_{K,i}$ is the $(d-1)$-dimensional face of $K$ that is opposite $x_i$. The dihedral angle between the faces $\Fki$ and $\Fkj$ is denoted by $\theta_{ij}^K$. We say that the family of meshes $\{\mathcal{T}_k\}_{k\in\mathbb{N}}$ satisfies the \emph{XZ condition}~\cite{xu1999monotone} if the following holds: for any $k\in\mathbb{N}$ and for any internal edge $E\in\mathcal{E}_k$ formed by neighbouring vertices $x_i$ and $x_j$ there holds
\begin{equation}\label{XZ-condition}
	\sum_{K'\in \Tke} |F_{K',i}\cap F_{K',j}|_{d-2}\cot(\theta_{ij}^{K'})\geq 0.
\end{equation}
For instance,  when $d=2$, the condition \eqref{XZ-condition} requires that the sum of the angles opposite to any edge in the mesh should be less than or equal to $\pi$.
More generally, the XZ condition allows for some unstructured meshes with nonacute elements. Note that under condition \eqref{XZ-condition}, the stiffness matrix for the Laplacian on $V_k$ is an $M$-matrix, see~\cite{xu1999monotone}.
\medskip 

\paragraph{\emph{Construction of the stabilization.}} Given an interior edge $E\in\mathcal{E}_{k,\Omega}$, let $\wEk\geq 0$ denote a nonnegative weight that satisfies
\begin{equation}\label{eq:weight_condition}
	\frac{\delta L_H \diam E}{2(d+1)} < \wEk \leq C L_H \diam E,
\end{equation}
for some fixed constant $C$ independent of $E$ and $k$. Recall that $\delta$ denotes here the shape-regularity parameter of the sequence of meshes.
For each $E\in\mathcal{E}_{k,\Omega}$, let $\bm{t}_E$ denote a fixed choice of unit tangent vector to $E$; the orientation of $\bm{t}_E$ has no effect on what is to follow.
For sequences of meshes satisfying the XZ condition, the stabilization matrix $\Dk\in L^\infty(\Omega;\R^{d\times d})$ introduced in \cite{osborne2023finite} is defined element-wise over the mesh $\Tk$ by
\begin{equation}\label{edge-tensor-formula}
	\Dk|_K\coloneqq \sum_{E\in\mathcal{E}_K}\wEk\,\bm{t}_E\otimes\bm{t}_E\quad \forall K\in\mathcal{T}_k, 
\end{equation}
where we recall that $\calEK\subset \mathcal{E}_{k,\Omega}$ denotes the edges of $K$ that are internal edges, and $\otimes$ denotes the outer-product of vectors, i.e.\ $\bm{t}_E\otimes\bm{t}_E\coloneqq \bm{t}_E\bm{t}_E^T\in\mathbb{R}^{d\times d}$.
It is clear that $\Dk$ is well-defined since each summand in the right-hand-side of \eqref{edge-tensor-formula} is independent of the choice of orientations of the tangent vectors $\bm{t}_E$.
Moreover, for each $K\in\mathcal{T}_k$,  $\Dk|_K$ is a symmetric, positive semi-definite matrix in $\mathbb{R}^{d\times d}$ since the weights $\wEk$ are nonnegative. 
\begin{lemma}[\cite{osborne2023finite}]\label{lemma-DMP-vol-stabilization}
	Suppose that the family of meshes $\{\mathcal{T}_k\}_{k\in\mathbb{N}}$ satisfies the XZ condition~\eqref{XZ-condition} and that, for each $k\in\mathbb{N}$, the matrix-valued function $D_k$ is given by \eqref{edge-tensor-formula} with the weights satisfying~\eqref{eq:weight_condition}.
	{Then assumptions~\ref{ass:bounded} and \ref{ass:dmp} are satisfied.}	
\end{lemma}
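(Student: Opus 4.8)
The plan is to verify the two assumptions separately: \ref{ass:bounded} by a direct Frobenius-norm estimate on $\Dk$, and \ref{ass:dmp} by showing that the interior stiffness matrix of every operator in $W(V_k,\Dk)$ is a nonsingular M-matrix. Assumption \ref{ass:bounded} is the easier one. Positive semi-definiteness of $\Dk|_K=\sum_{E\in\calEK}\wEk\,\bm{t}_E\otimes\bm{t}_E$ holds because each rank-one summand is positive semi-definite and $\wEk\geq 0$. For the size bound I would use that $\abs{\bm{t}_E\otimes\bm{t}_E}=1$ in the Frobenius norm, since $\bm{t}_E$ is a unit vector, so that the triangle inequality gives $\abs{\Dk|_K}\leq\sum_{E\in\calEK}\wEk$. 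As a $d$-simplex has at most $\tfrac{d(d+1)}{2}$ edges and $\diam E\leq\diam K$, the upper bound in~\eqref{eq:weight_condition} yields $\abs{\Dk|_K}\leq \tfrac{d(d+1)}{2}\,C L_H\,\htk|_K$, which is precisely~\ref{ass:bounded} with $\CD=\tfrac{d(d+1)}{2}C L_H$.

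The substance is~\ref{ass:dmp}. Since $V_k\subset H^1_0(\Omega)$, every function is determined by its interior nodal values, so for $L\in W(V_k,\Dk)$ the discrete maximum principle is exactly the statement that the interior stiffness matrix $\mathbf{L}=\big(\langle L\xi_j,\xi_i\rangle\big)_{i,j=1}^{M_k}$ has a nonnegative inverse. The matrix of the adjoint $L^*$ is $\mathbf{L}^{T}$, and because $\mathbf{L}^{-1}\geq 0$ if and only if $(\mathbf{L}^{T})^{-1}=(\mathbf{L}^{-1})^{T}\geq 0$, it suffices to prove monotonicity of the single matrix $\mathbf{L}$ to obtain the DMP for both $L$ and $L^*$ simultaneously. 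I would establish that $\mathbf{L}$ is a nonsingular M-matrix by checking two properties: (i) all off-diagonal entries are nonpositive, and (ii) $\mathbf{L}$ is weakly chained diagonally dominant.

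For (i), fix an internal edge $E=[x_i,x_j]$ and split $A_k=\nu\mathbb{I}_d+\Dk$ as in~\eqref{eq:Ak_def}. The pure Laplacian contribution $\nu\sum_{K\in\Tke}\int_K\nabla\xi_j\cdot\nabla\xi_i$ is nonpositive by the standard cotangent formula for piecewise affine stiffness entries together with the XZ condition~\eqref{XZ-condition}. The crucial structural observation is that the tangential stabilization~\eqref{edge-tensor-formula} localizes: since $\bm{t}_{E'}\cdot\nabla\xi_i=0$ unless $x_i$ is an endpoint of $E'$, the product $(\bm{t}_{E'}\cdot\nabla\xi_i)(\bm{t}_{E'}\cdot\nabla\xi_j)$ vanishes for every edge $E'\neq E$ of $K$, so only $E$ itself contributes, namely $-\wEk\abs{K}/(\diam E)^2\leq 0$. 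The convection part is estimated by $\abs{\int_K\tilde{b}\cdot\nabla\xi_j\,\xi_i}\leq L_H\abs{\nabla\xi_j}\,\abs{K}/(d+1)$ together with the shape-regularity bound $\abs{\nabla\xi_j}\,\diam E\leq\delta$, and the lower bound on $\wEk$ in~\eqref{eq:weight_condition} is calibrated exactly so that the negative stabilization term dominates this contribution elementwise. Hence $\langle L\xi_j,\xi_i\rangle\leq 0$. For (ii), the partition-of-unity identity $\sum_l\xi_l\equiv 1$ gives $\langle L\cdot 1,\xi_i\rangle=0$, because $\nabla 1=0$ annihilates both the diffusion and the convection terms in~\eqref{L-operator}; thus the full row sums vanish, and combined with the nonpositivity of all off-diagonals this makes the interior rows diagonally dominant, strictly so for rows adjacent to the boundary, while connectedness of the mesh supplies irreducibility. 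A weakly chained diagonally dominant L-matrix is a nonsingular M-matrix with $\mathbf{L}^{-1}\geq 0$, which yields~\ref{ass:dmp}.

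The hard part is step (i): verifying that the calibrated weight lower bound forces the tangential stabilization to dominate the convective off-diagonal contribution \emph{uniformly} in $k$ and over all admissible drifts $\tilde{b}$ with $\norm{\tilde{b}}_{L^\infty(\Omega;\R^d)}\leq L_H$. This is where the precise constant $\delta L_H\diam E/(2(d+1))$ in~\eqref{eq:weight_condition} and the sharp geometric estimates for $\abs{\nabla\xi_j}$ on shape-regular simplices are essential, and it is exactly the computation carried out in~\cite{osborne2023finite}, on which I would rely for the fully quantitative form of the elementwise domination.
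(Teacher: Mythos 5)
The paper itself offers no argument for this lemma beyond the citation to \cite[Proof of Theorem~4.2]{osborne2023finite}, so your proposal is being compared against the external source rather than an in-paper proof. Your reconstruction follows what is essentially the intended route: \ref{ass:bounded} by the Frobenius-norm count over the $\tfrac{d(d+1)}{2}$ edges of a simplex, and \ref{ass:dmp} by showing the interior stiffness matrix is a monotone M-matrix, with the XZ condition handling the $\nu$-Laplacian off-diagonals, the localization $(\bm{t}_{E'}\cdot\nabla\xi_i)(\bm{t}_{E'}\cdot\nabla\xi_j)=0$ for $E'\neq E$ isolating the single negative contribution $-\wEk\abs{K}/(\diam E)^2$, and the calibrated weight absorbing the convection term. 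Your observation that the DMP for $L^*$ comes for free from $(\mathbf{L}^{-1})^{T}=(\mathbf{L}^{T})^{-1}$ is correct and is exactly why~\ref{ass:dmp} can be stated for both operators at once. One quantitative point in your step~(i) does not quite close as written: with only $\abs{\nabla\xi_j}\,\diam E\leq\delta$, the elementwise domination would require $\wEk\geq\delta L_H\diam E/(d+1)$, which is \emph{twice} the lower bound in~\eqref{eq:weight_condition}. The missing factor of $2$ comes from the sharper estimate $\abs{\nabla\xi_j}|_K=1/\mathrm{dist}(x_j,F_{K,j})\leq 1/(2\rho_K)$, which follows from $\sum_i\abs{F_{K,i}}\geq 2\abs{F_{K,j}}$ (a consequence of $\sum_i\abs{F_{K,i}}\mathbf{n}_i=0$). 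Since you explicitly defer the fully quantitative domination to the cited reference, this is an imprecision in the sketch rather than a gap, but it is worth recording that the constant $\tfrac{\delta L_H\diam E}{2(d+1)}$ is sharp for precisely this reason.
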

\begin{proof}
	See \cite[Proof of Theorem~4.2, pp.~28--29]{osborne2023finite}.
\end{proof}

\subsection{Strict acuteness condition}\label{sec:acute_meshes}
\paragraph{\emph{Formulation of the condition.}} We say that the family of meshes $\{\mathcal{T}_k\}_{k\in\mathbb{N}}$ is \emph{strictly acute} \cite{burman2002nonlinear} if the following condition holds: there exists $\theta\in (0,\pi/2)$, independent of $k\in\mathbb{N}$, such that, for each $k\in\mathbb{N}$, the nodal basis $\{\xi_1,\cdots,\xi_{M_k}\}$ of $V_k$ satisfies
\begin{equation}\label{acute}
	\nabla \xi_{i}\cdot\nabla \xi_j|_K\leq -\sin(\theta)\left|\nabla \xi_i|_K\right| \left|\nabla \xi_j|_K\right|\quad \forall 1\leq i,j\leq M_k, i\neq j,\forall K\in\mathcal{T}_k.
\end{equation}
It is well-known that when $d=2$ the strict acuteness condition \eqref{acute} indicates that the largest angle of a given triangle $K\in\mathcal{T}_k$ is at most $\frac{\pi}{2}-\theta$, while in the case $d=3$ this condition indicates that each angle formed by the six pairs of faces of any tetrahedron $K\in\mathcal{T}_k$ is at most $\frac{\pi}{2}-\theta$ (see \cite{burman2002nonlinear}). Furthermore, the strict acuteness condition implies the XZ condition.
\medskip

\paragraph{\emph{Construction of the stabilization.}}
In this case, we consider the stabilization method given by artificial diffusion as in \cite{osborne2022analysis,osborne2024erratum}. More precisely, let $\mu>1$ be a fixed constant and let $k\in\mathbb{N}$ be given. For $K\in\mathcal{T}_k$, we denote by $\{\psi_{k,0}^K,\cdots, \psi_{k,d}^{K}\}\subset V_{k}$ the set of nodal basis functions associated with the $d+1$ nodes of $K$. Let $\sigma_{K}^{k}\coloneqq (\diam K)\min_{0\leq i\leq d}|\nabla \psi_{k,i}^K|$ and $\sigma^{k}\coloneqq \min_{K\in\mathcal{T}_k}\sigma_K^{k}.$ We note that, owing to the shape regularity of the family of meshes $\{\mathcal{T}_k\}_{k\in\mathbb{N}}$, there exist constants $\underline{\sigma},\overline{\sigma}>0$, that are independent of $k\in\mathbb{N}$, such that $\underline{\sigma}\leq\sigma^{k}\leq \overline{\sigma}$ for all $k\in\mathbb{N}.$
We then define $D_k$ element-wise over $\mathcal{T}_k$ by 
\begin{equation}\label{eta-iota}
	D_k|_{K}\coloneqq 
	\max\left(\frac{\mu L_H h_{\mathcal{T}_k}|_K}{\sigma^{k}\sin(\theta)}-\nu,0\right)\mathbb{I}_d\quad \forall K\in\mathcal{T}_k.
\end{equation}
It is clear that $D_k$ is in $L^{\infty}\left(\Omega;{\Rddsymp}\right)$.
Although the strict acuteness condition is less general than the XZ condition, it is nonetheless a case of special interest. Indeed, note if $k\in\N$ is sufficiently large, in particular if the mesh-size is sufficiently small, then $\frac{\mu L_H \htk}{\sigma^k \sin(\theta)} \leq \nu $ almost everywhere in $\Omega$, and therefore~\eqref{eta-iota} implies that \eqref{eq:vanishing_stabilization} holds whenever $k$ is sufficiently large.
\begin{lemma}[\cite{osborne2022analysis,osborne2024erratum}]\label{lemma-DMP-art-diff-stabilization}
	Suppose that the family of meshes $\{\mathcal{T}_k\}_{k\in\mathbb{N}}$ satisfies the strict acuteness condition \eqref{acute} and that, for each $k\in\mathbb{N}$, the matrix-valued function $D_k$ is given by \eqref{eta-iota}. {Then~assumptions~\ref{ass:bounded} and \ref{ass:dmp} are satisfied. Moreover, there exists $k^*\in \N$ such that~\eqref{eq:vanishing_stabilization} holds for all $k\geq k^*$.}
\end{lemma}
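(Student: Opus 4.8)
The plan is to treat the three conclusions in increasing order of difficulty: assumption~\ref{ass:bounded} and the vanishing property~\eqref{eq:vanishing_stabilization} follow at once from the explicit formula~\eqref{eta-iota}, while the discrete maximum principle~\ref{ass:dmp} is the crux. First I would dispatch~\ref{ass:bounded} and~\eqref{eq:vanishing_stabilization}. By~\eqref{eta-iota}, on each $K\in\Tk$ the matrix $\Dk|_K$ is a nonnegative multiple of $\mathbb{I}_d$, hence symmetric positive semi-definite, so $\Dk\in L^\infty(\Omega;\Rddsymp)$; and since $\abs{\mathbb{I}_d}=\sqrt{d}$ in the Frobenius norm, using $\sigma^{k}\geq\underline{\sigma}$ gives $\abs{\Dk|_K}\leq \frac{\mu L_H\sqrt{d}}{\underline{\sigma}\sin\theta}\,\htk|_K$, which is~\ref{ass:bounded} with $\CD\coloneqq \mu L_H\sqrt{d}/(\underline{\sigma}\sin\theta)$. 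For the vanishing property, again using $\htk|_K\leq h_k$ and $\sigma^{k}\geq\underline{\sigma}$, whenever $h_k\leq \nu\underline{\sigma}\sin\theta/(\mu L_H)$ the argument of the maximum in~\eqref{eta-iota} is nonpositive on every element, so $\Dk\equiv 0$; since $h_k\to 0$, such a $k^*$ exists.

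The heart of the matter is~\ref{ass:dmp}. Fix $k$ and $L\in W(V_k,\Dk)$, write $a(v,w)=\int_\Omega A_k\nabla v\cdot\nabla w+(\tilde b\cdot\nabla v)w\,\dd x$ for the bilinear form from~\eqref{L-operator} extended to the full continuous piecewise affine space, and let $\mathbf{L}=(L_{ij})_{1\le i,j\le M_k}$ with $L_{ij}\coloneqq a(\xi_j,\xi_i)$ be the interior stiffness matrix. I would first show that $\mathbf{L}$ is a $Z$-matrix with strictly negative neighbour entries. For non-neighbouring $i\ne j$ the supports are essentially disjoint and $L_{ij}=0$, so consider neighbours sharing an edge $E$. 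On each $K\in\Tke$, since $A_k|_K=(\nu+c_K)\mathbb{I}_d$ with $c_K\ge0$ the scalar in~\eqref{eta-iota}, the acuteness condition~\eqref{acute} bounds the diffusion contribution above by $-(\nu+c_K)\sin\theta\,\abs{\nabla\xi_i|_K}\abs{\nabla\xi_j|_K}\abs{K}$, whereas the convection term $\int_K(\tilde b\cdot\nabla\xi_j)\xi_i$ is controlled by $\frac{L_H}{d+1}\abs{\nabla\xi_j|_K}\abs{K}$ using $\norm{\tilde b}_{L^\infty}\le L_H$, $\xi_i\ge0$, and $\int_K\xi_i=\abs{K}/(d+1)$. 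The key algebraic point is that the lower bound $\abs{\nabla\xi_i|_K}\ge \sigma^{k}/\htk|_K$, coming from the definition of $\sigma^{k}$, combines with $\nu+c_K\ge \mu L_H\htk|_K/(\sigma^{k}\sin\theta)$ from~\eqref{eta-iota} to give $(\nu+c_K)\sin\theta\,\abs{\nabla\xi_i|_K}\ge \mu L_H$; since $\mu>1>1/(d+1)$ the diffusion strictly dominates the convection on each element, so $L_{ij}<0$. This is exactly where the design of~\eqref{eta-iota} is used.

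It remains to upgrade the $Z$-matrix structure to monotonicity $\mathbf{L}^{-1}\ge0$ and to handle the adjoint. The partition of unity $\sum_j\xi_j\equiv1$ and $\nabla 1=0$ give $\sum_{\text{all }j}a(\xi_j,\xi_i)=a(1,\xi_i)=0$, so the interior row sums satisfy $\sum_{j\le M_k}L_{ij}=-\sum_{j\,\mathrm{bdry}}a(\xi_j,\xi_i)\ge0$, with strict inequality for interior vertices adjacent to $\partial\Omega$; together with $L_{ii}\ge\nu\norm{\nabla\xi_i}_\Omega^2>0$ and the irreducibility furnished by connectedness of $\Omega$ (in general, each connected component of the interior adjacency graph touches $\partial\Omega$), this makes $\mathbf{L}$ an irreducibly diagonally dominant $Z$-matrix, hence a nonsingular $M$-matrix with $\mathbf{L}^{-1}\ge0$ entrywise. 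Given $v_k=\sum_j v_j\xi_j\in V_k$ with $\langle Lv_k,\xi_i\rangle_{V_k^*\times V_k}\ge0$ for all interior $i$, monotonicity yields $\mathbf v=\mathbf L^{-1}(\mathbf L\mathbf v)\ge0$, so $v_k\ge0$ at every node and hence in $\Omega$, proving the DMP for $L$.

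For the adjoint, its matrix is $\mathbf{L}^{T}$, which is again a $Z$-matrix and shares the spectrum of $\mathbf{L}$; since a nonsingular $M$-matrix is positive stable and positive stability is transpose-invariant, $\mathbf{L}^{T}$ is also a nonsingular $M$-matrix, giving $(\mathbf{L}^{T})^{-1}\ge0$ and thus the DMP for $L^*$. I expect the main obstacle to be precisely the elementwise convection-versus-diffusion estimate of the second paragraph, together with the realisation that the adjoint must be handled through the spectral (transpose-invariant) characterisation of $M$-matrices rather than by a row-sum argument, since the column sums $\sum_i L_{ij}=\int_\Omega\tilde b\cdot\nabla\xi_j\,\dd x$ need not be nonnegative.
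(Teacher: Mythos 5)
Your proof is correct, and it supplies in full what the paper leaves entirely to citations: the paper's ``proof'' of this lemma is just a pointer to \cite[Lemma~6.1]{osborne2022analysis}, \cite[Theorem~4.2]{burman2002nonlinear} and \cite[Section~8]{JensenSmears13}, and those references argue exactly as you do, namely that the artificial diffusion \eqref{eta-iota} is calibrated so that on acute meshes the stiffness matrix of every $L\in W(V_k,\Dk)$ is a nonsingular $M$-matrix. Your central computation --- combining $\nu+c_K\geq \mu L_H h_{\mathcal{T}_k}|_K/(\sigma^k\sin\theta)$ with $|\nabla\xi_i|_K|\geq \sigma^k/(h_{\mathcal{T}_k}|_K)$ to get $(\nu+c_K)\sin\theta\,|\nabla\xi_i|_K|\geq\mu L_H>L_H/(d+1)$ --- is precisely the point of the design of \eqref{eta-iota}, and your observation that the adjoint cannot be handled by a row-sum argument but must go through a transpose-invariant characterization of nonsingular $M$-matrices is the right one. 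Two small points deserve care. First, your row-sum step $\sum_{j\le M_k}L_{ij}=-\sum_{j\,\mathrm{bdry}}a(\xi_j,\xi_i)\ge 0$ uses the sign condition $a(\xi_j,\xi_i)<0$ also when $x_j\in\partial\Omega$, whereas \eqref{acute} as written only quantifies over interior indices $1\le i,j\le M_k$; the inequality does hold for all vertex pairs under the geometric acuteness of the elements (and is stated that way in the cited references), but you should invoke it in that form. Second, the claim $L_{ii}\ge\nu\|\nabla\xi_i\|_\Omega^2$ is not immediate, since the convective contribution $\int_\Omega(\tilde b\cdot\nabla\xi_i)\xi_i\,\mathrm{d}x$ to the diagonal has no sign; the positivity of $L_{ii}$ instead follows for free from the partition-of-unity identity $\sum_{\text{all }j}a(\xi_j,\xi_i)=0$ together with the strict negativity of the off-diagonal entries, so your conclusion stands with that substitution.
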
 
\begin{proof}
	See~\cite[Lemma~6.1]{osborne2022analysis,osborne2024erratum}, and also~\cite[Theorem~4.2]{burman2002nonlinear} and~\cite[Section~8]{JensenSmears13}.
\end{proof}

\section{Main results}\label{sec-4-main-results}\label{sec:main_results}
\subsection{Bounds for the $H^1$-norm error of the density and value functions}\label{sec:l2bounds}

We now present our main result on \emph{a priori} bounds for the $H^1$-norm of the error for the approximations of the solution of the MFG system~\eqref{weakform-C1H}. First, in Theorem~\ref{theorem-disc-error-smooth-H-lip} below, we give the main bound for the general abstract method of Section~\ref{sec:general_method}. Then, in Remarks~\ref{rem:near_quasiopt} and \ref{rem:full_optimality} below, we show how the bound of Theorem~\ref{theorem-disc-error-smooth-H-lip} applies to the stabilized methods of~\cite{osborne2023finite,osborne2022analysis,osborne2024erratum}. Then, in Corollary~\ref{corollary-optimal-convergence-rate}, we show how the bound of Theorem~\ref{theorem-disc-error-smooth-H-lip} implies optimal rates of convergence of the error with respect to the mesh-size for sufficiently smooth solutions.

\begin{theorem}[A priori bound]\label{theorem-disc-error-smooth-H-lip}
	Assume~\ref{ass:bounded} and~\ref{ass:dmp}. Then, there exists a $k_0\in\N$ such that
	\begin{multline}\label{eq:main_a_priori}
		\norm{m-m_k}_{H^1(\Omega)}+\norm{u-u_k}_{H^1(\Omega)}
		\\ \lesssim \inf_{\overline{m}_k\in V_k}\norm{m-\overline{m}_k}_{H^1(\Omega)}+\inf_{\overline{u}_k\in V_k}\norm{u-\overline{u}_k}_{H^1(\Omega)} + \norm{{\Dk} \nabla m}_\Omega + \norm{{\Dk} \nabla u}_\Omega,
	\end{multline} 
	for all $k\geq k_0$. The hidden constant in~\eqref{eq:main_a_priori} depends only on $d$, $\Omega$, $c_F$, $L_F$, $L_{H_p}$, $L_H$, $\nu$, {$\CD$,} {$\Cstab$,} {and $\Mfty$.}
\end{theorem}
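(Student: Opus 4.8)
The plan is to estimate the two discrete errors $\overline{u}_k-u_k$ and $\overline{m}_k-m_k$ for arbitrary $\overline{u}_k,\overline{m}_k\in V_k$, by applying the uniform invertibility bound of Lemma~\ref{lemma-uniform-L-inv-bound-disc} to appropriately linearized operators in $W(V_k,\Dk)$, and then to pass to the infimum over $V_k$ at the very end. Throughout I would split $u-u_k=(u-\overline{u}_k)+(\overline{u}_k-u_k)$ and likewise for $m$, and freely use the uniform $H^1$-bounds and the convergence $u_k\to u$, $m_k\to m$ supplied by Remark~\ref{rem:aprioriconvergence}, as well as the nonnegativity of $m$ and of the discrete density $m_k$ (the latter from the discrete maximum principle, assumption~\ref{ass:dmp}, together with the nonnegativity of $G$).

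First I would bound the value error. Linearizing the discrete HJB equation~\eqref{weakformdisc1} about the exact solution, the operator $L_u$ with diffusion $A_k$ and drift $\frac{\partial H}{\partial p}[\nabla u]$ lies in $W(V_k,\Dk)$, since $\abs{\frac{\partial H}{\partial p}[\nabla u]}\le L_H$ by~\eqref{h-deriv-linf-bound}. Testing $L_u(\overline{u}_k-u_k)$ against $\psi_k\in V_k$, inserting the discrete equation for the $A_k\nabla u_k$ term, subtracting~\eqref{weakform1-C1H}, and peeling off the stabilization via $A_k=\nu\mathbb{I}_d+\Dk$, the Hamiltonian contributions collapse into the semismooth residual $H[\nabla u_k]-H[\nabla u]-\frac{\partial H}{\partial p}[\nabla u]\cdot\nabla(u_k-u)$; for $k$ large enough that $\norm{u-u_k}_{H^1(\Omega)}\le R(\epsilon)$, Lemma~\ref{lemma-semismooth-tech-result} bounds this by $\epsilon\norm{u-u_k}_{H^1(\Omega)}$, while the coupling contributes $L_F\norm{m-m_k}_\Omega$ via~\eqref{strong-mono-F-C1H}'s Lipschitz counterpart. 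Applying Lemma~\ref{lemma-uniform-L-inv-bound-disc} and absorbing the $\epsilon$-term gives
\[
\norm{u-u_k}_{H^1(\Omega)}\lesssim \inf_{\overline{u}_k\in V_k}\norm{u-\overline{u}_k}_{H^1(\Omega)}+\norm{\Dk\nabla u}_\Omega+\norm{m-m_k}_\Omega.
\]
An analogous computation for~\eqref{weakformdisc2}, using that the KFP operator is the adjoint of an element of $W(V_k,\Dk)$ (hence uniformly invertible), the Lipschitz bound~\eqref{eq:Hp_bound}, and the essential bound $\norm{m}_{L^\infty(\Omega)}\le\Mfty$ of~\eqref{m-ess-bound} to handle the drift discrepancy $m\bigl(\frac{\partial H}{\partial p}[\nabla u_k]-\frac{\partial H}{\partial p}[\nabla u]\bigr)$, yields
\[
\norm{m-m_k}_{H^1(\Omega)}\lesssim \inf_{\overline{m}_k\in V_k}\norm{m-\overline{m}_k}_{H^1(\Omega)}+\norm{\Dk\nabla m}_\Omega+\norm{u-u_k}_{H^1(\Omega)}.
\]
These two estimates are circular, the first containing $\norm{m-m_k}_\Omega$ and the second $\norm{u-u_k}_{H^1(\Omega)}$.

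The hard part, and where I expect the main obstacle to lie, is to break this circularity with an independent bound on the density error in the weaker $L^2$-norm. For this I would exploit strong monotonicity~\eqref{strong-mono-F-C1H}, $c_F\norm{m-m_k}_\Omega^2\le\langle F[m]-F[m_k],m-m_k\rangle$, through a discrete Lasry--Lions energy identity: test the continuous and discrete HJB equations against $m-m_k$ and its $V_k$-part $\overline{m}_k-m_k$, test the continuous and discrete KFP equations against $u-u_k$ and $\overline{u}_k-u_k$, and combine so that the $G$-contributions cancel and the principal symmetric cross-term $\nu\nabla(u-u_k)\cdot\nabla(m-m_k)$ cancels. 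The decisive structural point is that the surviving genuinely nonlinear Hamiltonian expression
\[
\int_\Omega\left(H[\nabla u]-H[\nabla u_k]\right)(m-m_k)\,\dd x-\int_\Omega\left(m\tfrac{\partial H}{\partial p}[\nabla u]-m_k\tfrac{\partial H}{\partial p}[\nabla u_k]\right)\cdot\nabla(u-u_k)\,\dd x
\]
splits into an $m$-weighted and an $m_k$-weighted Bregman-type term, each nonpositive by convexity of $H$ together with $m\ge0$ and $m_k\ge0$; it may therefore be discarded. All remaining terms regroup, upon re-adding the stabilization, into discrete HJB and KFP residuals tested against the approximation errors $m-\overline{m}_k$ and $u-\overline{u}_k$, plus the stabilization pairings $\int_\Omega\Dk\nabla u_k\cdot\nabla(\overline{m}_k-m_k)\,\dd x$ and $\int_\Omega\Dk\nabla m_k\cdot\nabla(\overline{u}_k-u_k)\,\dd x$; each of these is a product of an error quantity with either an approximation error or with $\norm{\Dk\nabla u}_\Omega$ or $\norm{\Dk\nabla m}_\Omega$.

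Finally I would close the argument. Using the two circular bounds to replace each $\norm{m-m_k}_{H^1(\Omega)}$ and $\norm{u-u_k}_{H^1(\Omega)}$ appearing on the right of the monotonicity inequality by $\norm{m-m_k}_\Omega$ plus data, and applying Young's inequality (the factor $h_k\to0$ coming from~\ref{ass:bounded}, $\abs{\Dk}\le\CD h_{\Tk}$, renders several cross-products higher order), I would absorb the $\norm{m-m_k}_\Omega^2$ terms into the left-hand side for $k$ sufficiently large to obtain $\norm{m-m_k}_\Omega$ bounded by the four target quantities on the right of~\eqref{eq:main_a_priori}. Substituting this back into the HJB and KFP estimates of the second paragraph upgrades both to full $H^1$-control, and taking the infimum over $\overline{u}_k,\overline{m}_k\in V_k$ produces~\eqref{eq:main_a_priori}; the threshold $k_0$ is dictated by the semismoothness radius $R(\epsilon)$ and by the smallness of $h_k$ required for the absorption.
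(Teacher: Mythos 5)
Your plan assembles the same three ingredients as the paper's proof: (i) local stability of the linearized discrete HJB equation via Lemma~\ref{lemma-uniform-L-inv-bound-disc} and the semismoothness bound of Lemma~\ref{lemma-semismooth-tech-result}, plus stability of the discrete KFP equation; (ii) a Lasry--Lions monotonicity identity exploiting strong monotonicity of $F$, convexity of $H$, and nonnegativity of the densities to control the density error in $L^2$ only; (iii) absorption by Young's inequality and passage to the infimum. Where you genuinely diverge is in the execution of step (ii). The paper keeps the monotonicity argument entirely discrete: Lemma~\ref{lemma-key-L2-bound} compares $(u_k,m_k)$ with arbitrary discrete $(\overline{u}_k,\overline{m}_k)$, all four equations tested against $V_k$-functions, so the $G$-terms and the symmetric $A_k$ cross-terms cancel exactly and the right-hand side is a sum of residuals paired with \emph{discrete} errors --- manifestly second order. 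The price is that the convexity argument needs $\overline{m}_k\geq 0$, whence the restriction to $\Vkp$ and the auxiliary Lemma~\ref{lemma-infSk-equiv-inf} showing that nonnegative discrete approximation of $m$ costs only the stabilization term; this step is absent from your plan, and in your variant it is indeed unnecessary because you weight the Bregman terms by $m\geq 0$ and $m_k\geq 0$ directly.

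The caveat is that your mixed continuous--discrete testing does not deliver the cancellations you assert. With the continuous KFP tested against $u-u_k$ and the discrete KFP against $\overline{u}_k-u_k$, the source terms leave $\langle G,u-\overline{u}_k\rangle_{H^{-1}\times H^1_0}$, not zero; similarly the coupling terms leave $\langle F[m_k],m-\overline{m}_k\rangle_{H^{-1}\times H^1_0}$, and extracting your displayed Bregman expression requires adding and subtracting terms such as $(H[\nabla u_k],m-\overline{m}_k)_\Omega$ and $(A_k\nabla u_k,\nabla(m-\overline{m}_k))_\Omega$. Each of these is \emph{a priori} only first order in the approximation error, which against the squared left-hand side $c_F\norm{m-m_k}_\Omega^2$ would cost half the convergence rate. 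They do regroup into the residual of the discrete solution acting on $m-\overline{m}_k$ and $u-\overline{u}_k$ as functionals on $H^1_0(\Omega)$, but to see that these products are second order you must additionally bound the $H^{-1}(\Omega)$-norms of those residuals by $\norm{u-u_k}_{H^1(\Omega)}+\norm{m-m_k}_\Omega+\norm{\Dk\nabla u}_\Omega$ (and its KFP analogue) by subtracting the continuous equations --- an extra step your sketch does not supply. With that step added, your closing absorption argument goes through; without it, the claimed cancellation is simply false as written.
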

We stress that Theorem~\ref{theorem-disc-error-smooth-H-lip} does not require any additional regularity on the solution.
The proof of Theorem~\ref{theorem-disc-error-smooth-H-lip} is postponed to Section~\ref{sec:main_thm_1} below.
\begin{remark}[Near quasi-optimality]\label{rem:near_quasiopt}
As an immediate consequence of~\eqref{eq:main_a_priori} and of the bound~$\abs{\Dk}\leq \CD \htk$ in $\Omega$ from~\ref{ass:bounded}, we obtain
\begin{multline}\label{near-quasi-opt-res}
	\|m-m_k\|_{H^1(\Omega)}+\|u-u_k\|_{H^1(\Omega)}
	\\
	\lesssim \inf_{\overline{m}_k\in V_k}\|m-\overline{m}_k\|_{H^1(\Omega)}+ \inf_{\overline{u}_k\in V_k}\|u-\overline{u}_k\|_{H^1(\Omega)}+ \|h_{\mathcal{T}_k}\nabla m\|_{\Omega}+\|h_{\mathcal{T}_k}\nabla u\|_{\Omega}.
\end{multline}
We then see from~\eqref{near-quasi-opt-res} that, for sufficiently refined meshes, the error of the stabilized FEM is bounded by the best approximation error plus an additional term that is of {first-order} with respect to the mesh-size function $\htk$.
Recall that a numerical method is said to be \emph{quasi-optimal} if the error between exact and computed solutions is bounded by a constant times the best approximation error from the same approximation space.
{Note that for piecewise affine elements, in general the convergence rate of $\inf_{\overline{m}_k\in V_k}\norm{m-\overline{m}_k}_{H^1(\Omega)}$
 and $\inf_{\overline{u}_k\in V_k}\norm{u-\overline{u}_k}_{H^1(\Omega)}$ with respect to the mesh-size is generally at best of first-order, and may be lower for nonsmooth solutions.}
{Therefore, the additional terms $\norm{h_{\Tk} \nabla m}_\Omega$ and $\norm{h_{\Tk}\nabla u}_\Omega$ appearing in~\eqref{near-quasi-opt-res} are of optimal order with respect to the mesh-size.
This is why} we shall say that the method is asymptotically \emph{nearly} quasi-optimal for the $H^1$-norm of the error. In particular, this applies to the stabilization for XZ meshes from Section~\ref{sec:XZ_meshes}.
\end{remark}
	
\begin{remark}[Full quasi-optimality for vanishing {stabilization}]\label{rem:full_optimality}
		In the case where the stabilization terms vanish identically once the meshes are sufficiently refined, i.e.\ if~\eqref{eq:vanishing_stabilization} holds, then Theorem~\ref{theorem-disc-error-smooth-H-lip} implies that
		\begin{multline}\label{near-quasi-opt-res-art-diff}
			\|m-m_k\|_{H^1(\Omega)}+\|u-u_k\|_{H^1(\Omega)}
			\lesssim \inf_{\overline{m}_k\in V_k}\|m-\overline{m}_k\|_{H^1(\Omega)}+ \inf_{\overline{u}_k\in V_k}\|u-\overline{u}_k\|_{H^1(\Omega)},
		\end{multline}
		for all $k$ sufficiently large.
		The bound \eqref{near-quasi-opt-res-art-diff} shows that the method is then asymptotically fully quasi-optimal in the $H^1$-norm, i.e.\ without any additional terms. In particular, this stronger result holds for the case of the stabilization for strictly acute meshes from Section~\ref{sec:acute_meshes}.
\end{remark}
As an immediate corollary of Theorem \ref{theorem-disc-error-smooth-H-lip}, {we now show that the approximations} have optimal rates of convergence if the solution has sufficient regularity. {Recall that $h_k\coloneqq \norm{h_{\mathcal{T}_k}}_{L^\infty(\Omega)}$ denotes the maximum element size of~$\mathcal{T}_k$.}
\begin{corollary}[Optimal convergence rates]\label{corollary-optimal-convergence-rate}
	{Assume \ref{ass:bounded} and \ref{ass:dmp}. Suppose also} that $m$ and $u$ are in $H^{1+s}(\Omega)$ for some $s\in [0,1]$. Then, there exists $k_0\in\mathbb{N}$ such that 
	\begin{equation}\label{eq:convergence_rate}
		\|m-m_k\|_{H^1(\Omega)}+\|u-u_k\|_{H^1(\Omega)}\lesssim h_k^s{\left(\norm{m}_{H^{1+s}(\Omega)}+\norm{u}_{H^{1+s}(\Omega)}\right)},
	\end{equation}
	for all $k\geq k_0$. 
\end{corollary}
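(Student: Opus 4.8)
The plan is to derive the corollary directly from Theorem~\ref{theorem-disc-error-smooth-H-lip} by estimating each of the four terms on the right-hand side of~\eqref{eq:main_a_priori} separately by a constant times $h_k^s$. First I would fix the $k_0\in\N$ supplied by Theorem~\ref{theorem-disc-error-smooth-H-lip}, so that the bound
\[
\norm{m-m_k}_{H^1(\Omega)}+\norm{u-u_k}_{H^1(\Omega)}\lesssim\inf_{\overline{m}_k\in V_k}\norm{m-\overline{m}_k}_{H^1(\Omega)}+\inf_{\overline{u}_k\in V_k}\norm{u-\overline{u}_k}_{H^1(\Omega)}+\norm{\Dk\nabla m}_\Omega+\norm{\Dk\nabla u}_\Omega
\]
holds for all $k\geq k_0$. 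It then suffices to bound the two best-approximation errors and the two stabilization terms.

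For the best-approximation terms, I would appeal to standard finite element approximation theory for $H^1_0$-conforming piecewise affine spaces on shape-regular meshes. Since $m,u\in H^{1+s}(\Omega)\cap H^1_0(\Omega)$ with $s\in[0,1]$, a quasi-interpolation operator that preserves the homogeneous boundary condition (e.g.\ the Scott--Zhang interpolant) yields $\inf_{\overline{w}_k\in V_k}\norm{w-\overline{w}_k}_{H^1(\Omega)}\lesssim h_k^s\norm{w}_{H^{1+s}(\Omega)}$ for each $w\in\{m,u\}$. The endpoint cases $s=0$ and $s=1$ are classical, and the intermediate range follows by interpolation of the approximation operator between the corresponding Sobolev spaces, using the interpolation characterization of $H^{1+s}(\Omega)$ recalled in Section~\ref{sec-2-notation}.

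For the stabilization terms, I would use assumption~\ref{ass:bounded}, namely $\abs{\Dk}\leq\CD\htk$ a.e.\ in $\Omega$, together with $\htk\leq h_k$ pointwise, to obtain $\norm{\Dk\nabla w}_\Omega\leq\CD\norm{\htk\nabla w}_\Omega\leq\CD h_k\norm{w}_{H^1(\Omega)}$ for $w\in\{m,u\}$. Since $h_k\to0$, for $k$ large enough one has $h_k\leq1$ and hence $h_k=h_k^s h_k^{1-s}\leq h_k^s$; combined with the continuous embedding $\norm{w}_{H^1(\Omega)}\leq\norm{w}_{H^{1+s}(\Omega)}$ this gives $\norm{\Dk\nabla w}_\Omega\lesssim h_k^s\norm{w}_{H^{1+s}(\Omega)}$.

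Finally, I would enlarge $k_0$ if necessary so that $h_k\leq1$ for all $k\geq k_0$, and sum the four estimates to conclude~\eqref{eq:convergence_rate}. The only nonroutine ingredient is the fractional-order approximation estimate for the intermediate values $s\in(0,1)$, which is where the interpolation-of-spaces machinery enters; this is nonetheless entirely standard once a boundary-condition-preserving quasi-interpolant is used. Everything else is a direct substitution into~\eqref{eq:main_a_priori}, which is why the result is stated as an immediate corollary of Theorem~\ref{theorem-disc-error-smooth-H-lip}.
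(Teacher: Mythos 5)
Your proposal is correct and follows essentially the same route as the paper's proof: both reduce the corollary to the bound of Theorem~\ref{theorem-disc-error-smooth-H-lip} (via~\ref{ass:bounded}), invoke quasi-interpolation together with interpolation of Sobolev spaces for the fractional-order best-approximation estimates, and absorb the first-order stabilization terms using $h_k\lesssim h_k^s$ and $\norm{\cdot}_{H^1(\Omega)}\leq\norm{\cdot}_{H^{1+s}(\Omega)}$. The only cosmetic difference is that you enlarge $k_0$ to ensure $h_k\leq 1$, whereas the paper absorbs the factor $h_k^{1-s}$ into the hidden constant; both are fine.
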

\begin{proof}	
	{To begin, we show that any $v\in H_0^1(\Omega)\cap H^{1+s}(\Omega)$, with $s\in[0,1]$, satisfies
	\begin{equation}\label{gen-nonsmooth-bound}
		\inf_{\overline{v}_k\in V_k}\norm{v-\overline{v}_k}_{H^1(\Omega)}\lesssim h_k^s \norm{v}_{H^{1+s}(\Omega)}
	\end{equation}
	Proving this bound in the case $s=0$ is trivial, while proof for the case $s=1$ follows from well-known results for piecewise affine finite element approximation of $H^2$-regular functions by quasi-interpolation (see e.g.\ \cite{scott1990finite}). For the case where $s\in (0,1)$, the bound \eqref{gen-nonsmooth-bound} is a standard consequence of the theory of interpolation between Banach spaces (see e.g.\ \cite[Chapter 14]{BrennerScott08}). We include the proof of \eqref{gen-nonsmooth-bound} in the case $s\in (0,1)$ for completeness. 
	
	For each $k\in\mathbb{N}$, let $\mathcal{I}_k: H^1(\Omega)\to H^1(\Omega)$	denote the Scott--Zhang linear quasi-interpolation operator~\cite{scott1990finite} associated with the shape-regular mesh $\mathcal{T}_k$ that is defined in \cite[Theorem (4.8.3.8)]{BrennerScott08} and which satisfies $\mathcal{I}_kw=0$ on $\partial\Omega$ if $w\in H_0^1(\Omega)$.
	We then have that $\mathcal{I}_kw\in V_k$ for each $w\in H_0^1(\Omega)$, and so, for each $v\in H_0^1(\Omega)\cap H^{1+s}(\Omega)$ we get 
	\begin{equation}\label{best-approx-error-gen}
		\inf_{\overline{v}_k\in V_k}\norm{v-\overline{v}_k}_{H^1(\Omega)}\leq \norm{v-\mathcal{I}_kv}_{H^1(\Omega)}\quad\forall k\in\mathbb{N}.
	\end{equation}
	For each $k\in\mathbb{N}$ define the linear operator $Q_k:H^1(\Omega)\to H^1(\Omega)$ via $Q_kw\coloneqq w-\mathcal{I}_kw$ for $w\in H^1(\Omega)$. By applying \cite[Theorem (4.8.12)]{BrennerScott08} we obtain that, for all $w\in H^2(\Omega)$,
	\begin{equation}
		\norm{Q_kw}_{H^1(\Omega)}\lesssim h_k |v|_{H^2(\Omega)},
	\end{equation}
	so $Q_k$ maps $H^2(\Omega)$ into $H^1(\Omega)$ with 
	\begin{equation}\label{Q_k-norm-bound-1}
		\norm{Q_k}_{\mathcal{L}(H^2(\Omega),H^1(\Omega))}\lesssim h_k\quad\forall k\in\mathbb{N}.
	\end{equation}
	We also have by \cite[Corollary (4.8.15)]{BrennerScott08}
	that $\norm{Q_kw}_{H^1(\Omega)}\lesssim \norm{w}_{H^1(\Omega)}$ for all $w\in H^1(\Omega)$, so
	\begin{equation}\label{Q_k-norm-bound-2}
		\norm{Q_k}_{\mathcal{L}(H^1(\Omega),H^1(\Omega))}\lesssim 1\quad\forall k\in\mathbb{N}.
	\end{equation} We then apply the Banach space interpolation result \cite[Proposition (14.1.5)]{BrennerScott08} with $A_0=B_0=B_1=H^1(\Omega)$, and $A_1=H^2(\Omega)$ to deduce that $Q_k$ maps $H^{1+s}(\Omega)$ to $H^1(\Omega)$ with 
	\begin{equation}
		\norm{Q_k}_{\mathcal{L}(H^{1+s}(\Omega),H^1(\Omega))}\leq \norm{Q_k}_{\mathcal{L}(H^1(\Omega),H^1(\Omega))}^{1-s}	\norm{Q_k}_{\mathcal{L}(H^2(\Omega),H^1(\Omega))}^s\lesssim h_k^s
	\end{equation}
	after using \eqref{Q_k-norm-bound-1} and \eqref{Q_k-norm-bound-2}. This then implies that
	\begin{equation}
		\|Q_kv\|_{H^1(\Omega)} = \|v-\mathcal{I}_kv\|_{H^1(\Omega)}\lesssim h_k^s\|v\|_{H^{1+s}(\Omega)}\quad\forall v\in H^{1+s}(\Omega)
	\end{equation}
	This bound and \eqref{best-approx-error-gen} then give desired bound \eqref{gen-nonsmooth-bound}. 
	
	The regularity assumptions on $u$ and $m$, together with the bound  \eqref{gen-nonsmooth-bound}, imply
	\begin{subequations}
		\begin{align}
			\inf_{\overline{m}_k\in V_k}\norm{m-\overline{m}_k}_{H^1(\Omega)}&\lesssim h_k^s \norm{m}_{H^{1+s}(\Omega)}, \label{m-bound}
			\\
			\inf_{\overline{u}_k\in V_k}\norm{u-\overline{u}_k}_{H^1(\Omega)}&\lesssim h_k^s \norm{u}_{H^{1+s}(\Omega)}. \label{u-bound}
		\end{align}
	\end{subequations}
	}  
	Applying bounds \eqref{m-bound} and \eqref{u-bound} to~\eqref{near-quasi-opt-res} gives 
	\begin{equation}
		\begin{split}
			&\|m-m_k\|_{H^1(\Omega)}+\|u-u_k\|_{H^1(\Omega)}
			\\
			&\qquad\qquad\qquad\qquad\lesssim  h_k^s \left(\norm{m}_{H^{1+s}(\Omega)}+\norm{u}_{H^{1+s}(\Omega)}\right)+h_k \left(\norm{m}_{H^1(\Omega)}+\norm{u}_{H^1(\Omega)}\right).
		\end{split}
	\end{equation}
	We then obtain~\eqref{eq:convergence_rate} upon noting that $h_k(\norm{m}_{H^1(\Omega)}+\norm{u}_{H^1(\Omega)})\lesssim h_k^s (\norm{m}_{H^{1+s}(\Omega)}+\norm{u}_{H^{1+s}(\Omega)})$ since $s\in[0,1]$.
\end{proof}
We note that the rate of convergence with respect to the mesh-size that is shown in Corollary~\ref{corollary-optimal-convergence-rate} is optimal in the sense that it cannot be improved for general $u$ and $m$ in $H^{1+s}(\Omega)$ when considering the $H^1$-norm of the error. In particular, a first-order convergence rate in the $H^1$-norm is achieved for the case of $s=1$, i.e.\ when $u,\,m \in H^2(\Omega)$, which is the maximum rate of convergence that is generally possible for piecewise affine finite element approximations.

{
\begin{remark}[Time-dependent problems]
Although this work treats the case of steady-state problems, the numerical method considered here has been extended already to the case of time-dependent problems in~\cite{osborne2023finite}, which employs a stabilized finite element method in space coupled with implicit Euler discretizations of the time derivatives with mass lumping. The plain convergence of the resulting method is proved in~\cite{osborne2023finite}.
The analysis of rates of convergence of finite element approximations of time-dependent problems will be the subject of future work. Recall also that~\cite{bonnans2022error} obtains a rate of convergence for a finite difference approximation of time-dependent MFG systems.
\end{remark}
}

\subsection{Bounds for the $L^2$-norm error of the density function and $H^1$-norm error of the value function}
In the numerical experiments of~\cite{osborne2022analysis,osborne2024erratum}, it was observed that the convergence rate of $\norm{u-u_k}_{H^1(\Omega)}$ {can attain} the maximum possible convergence rate of order one, even for problems where $m$ had low regularity, {e.g.\ $m\not\in H^2(\Omega)$.}
We now give theoretical support to those experimental observations by proving that the error $\norm{u-u_k}_{H^1(\Omega)}$ has optimal rates of convergence even when $m$ has minimal regularity in $H^1(\Omega)$, at least in the case where some additional elliptic regularity is available, such as on convex domains. 
{Note that $m\not\in H^2(\Omega)$ is still possible when $\Omega$ is convex, owing to the distributional datum $G\in H^{-1}(\Omega)$.}
{We also remark that the analysis here assumes differentiability of the Hamiltonian, in contrast to the computations in~\cite{osborne2022analysis,osborne2024erratum}, which involved nondifferentiable Hamiltonians.}

The main idea to show this is to seek bounds for a composite norm on the error that involves the $L^2$-norm error for the density and $H^1$-norm error for the value function, rather than the stronger $H^1$-norm in both components that was considered in the previous subsection.
Our main result on this subject is in Theorem~\ref{theorem-disc-error-smooth-H-lip-app} below, which gives an error bound in this composite norm for the general class of methods of Section~\ref{sec:general_method}, without any regularity assumptions on the solution $(u,m)$.
\begin{theorem}\label{theorem-disc-error-smooth-H-lip-app}
Assume~\ref{ass:bounded} and \ref{ass:dmp}. Suppose additionally that the domain $\Omega$ is convex.
Then, there exists $k_0\in\mathbb{N}$ such that 
\begin{multline}\label{uk-error-est-convex}
\|m-m_k\|_{\Omega}+\|u-u_k\|_{H^1(\Omega)}\lesssim \inf_{\overline{u}_k\in V_k}\|u-\overline{u}_k\|_{H^1(\Omega)}+h_k\left(\|u\|_{H^1(\Omega)}+\|m\|_{H^1(\Omega)}\right),
\end{multline}
for all $k\geq k_0$. The hidden constant in the above bound depends only on $d$, $\Omega$, $c_F$, $L_F$, $L_{H_p}$, $L_H$, $\nu$, $\delta$, {$\CD$}, {$\Cstab$,} and {$\Mfty$}.
\end{theorem}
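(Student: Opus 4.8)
The plan is to estimate the two components of the error in different norms and then close a coupled system of bounds. I would control the value function error in $H^1(\Omega)$ by a stability argument in which the density error enters only through its weaker $L^2(\Omega)$-norm; this is possible because $F$ maps $L^2(\Omega)$ into $H^{-1}(\Omega)$. I would control the density error in $L^2(\Omega)$ by an Aubin--Nitsche duality argument that exploits the $H^2$-regularity afforded by the convexity of $\Omega$, thereby gaining a factor $h_k$ responsible for the optimal order on the right-hand side of~\eqref{uk-error-est-convex}. The two bounds couple, and closing them will rely on the discrete stability of the monotone MFG system together with the plain convergence $\norm{u-u_k}_{H^1(\Omega)}\to0$ and $\norm{m-m_k}_{H^1(\Omega)}\to0$ from Remark~\ref{rem:aprioriconvergence}.

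For the value function, set $b_k\coloneqq\frac{\partial H}{\partial p}[\nabla u_k]$, which satisfies $\norm{b_k}_{L^\infty(\Omega;\R^d)}\leq L_H$ by~\eqref{h-deriv-linf-bound}, and consider the linearized operator $L_u\in W(V_k,\Dk)$ given by $\langle L_u v_k,w_k\rangle\coloneqq\int_\Omega A_k\nabla v_k\cdot\nabla w_k+b_k\cdot\nabla v_k\,w_k$. For arbitrary $\overline{u}_k\in V_k$, I would bound $\norm{u_k-\overline{u}_k}_{H^1(\Omega)}\leq\Cstab\norm{L_u(u_k-\overline{u}_k)}_{V_k^*}$ by Lemma~\ref{lemma-uniform-L-inv-bound-disc}, and expand the residual using the continuous equation~\eqref{weakform1-C1H} and the linearization $H[\nabla u]-H[\nabla u_k]=b_k\cdot\nabla(u-u_k)+R_H[\nabla u,\nabla u_k]$. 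The residual then consists of the best-approximation error $\norm{u-\overline{u}_k}_{H^1(\Omega)}$, a stabilization term bounded by $\norm{\Dk\nabla u}_\Omega+\norm{u-\overline{u}_k}_{H^1(\Omega)}\lesssim h_k\norm{u}_{H^1(\Omega)}+\norm{u-\overline{u}_k}_{H^1(\Omega)}$ via~\ref{ass:bounded}, the coupling $\langle F[m]-F[m_k],w_k\rangle\leq L_F\norm{m-m_k}_\Omega\norm{w_k}_{H^1(\Omega)}$, and the semismooth remainder, which by Lemma~\ref{lemma-semismooth-tech-result} is at most $\epsilon\norm{u-u_k}_{H^1(\Omega)}\norm{w_k}_{H^1(\Omega)}$ once $k$ is large enough that $\norm{u-u_k}_{H^1(\Omega)}\leq R$. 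Taking $\epsilon$ small, absorbing, and taking the infimum over $\overline{u}_k$ would yield
\begin{equation*}
	\norm{u-u_k}_{H^1(\Omega)}\lesssim\inf_{\overline{u}_k\in V_k}\norm{u-\overline{u}_k}_{H^1(\Omega)}+h_k\norm{u}_{H^1(\Omega)}+\norm{m-m_k}_\Omega.
\end{equation*}

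For the density, set $b\coloneqq\frac{\partial H}{\partial p}[\nabla u]$ and define $z\in H_0^1(\Omega)$ as the solution of the adjoint linearized KFP problem $\int_\Omega\nu\nabla\phi\cdot\nabla z+\phi\,b\cdot\nabla z=(m-m_k,\phi)_\Omega$ for all $\phi\in H_0^1(\Omega)$; since this is the formal adjoint of the invertible Kolmogorov operator and $\Omega$ is convex, one has $z\in H^2(\Omega)\cap H_0^1(\Omega)$ with $\norm{z}_{H^2(\Omega)}\lesssim\norm{m-m_k}_\Omega$. Choosing $\phi=m-m_k$ and a quasi-interpolant $z_k\in V_k$ of $z$, and inserting~\eqref{weakform2-C1H} (tested with $z$ and with $z-z_k$) and~\eqref{weakformdisc2} (tested with $z_k$), I would obtain the identity
\begin{equation*}
	\norm{m-m_k}_\Omega^2=\int_\Omega\nu\nabla(m-m_k)\cdot\nabla(z-z_k)+(m-m_k)\,b\cdot\nabla(z-z_k)+\Dk\nabla m_k\cdot\nabla z_k+m_k(b_k-b)\cdot\nabla z_k\,\dd x.
\end{equation*}
Here $\norm{z-z_k}_{H^1(\Omega)}\lesssim h_k\norm{z}_{H^2(\Omega)}\lesssim h_k\norm{m-m_k}_\Omega$, so the first and third terms are bounded by $h_k\norm{m}_{H^1(\Omega)}\norm{m-m_k}_\Omega$ (using~\ref{ass:bounded} and the uniform $H^1$-stability of $m_k$) and the second by $h_k\norm{m-m_k}_\Omega^2$. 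The final term, using $\abs{b_k-b}\leq L_{H_p}\abs{\nabla(u-u_k)}$ from~\eqref{eq:Hp_bound} and the uniform bound $\norm{m_k}_{L^\infty(\Omega)}\lesssim\Mfty$ (nonnegativity and $L^\infty$-boundedness of the discrete density, a consequence of~\ref{ass:dmp}), is bounded by $\norm{u-u_k}_{H^1(\Omega)}\norm{m-m_k}_\Omega$. Dividing by $\norm{m-m_k}_\Omega$ and absorbing the term carrying the factor $h_k$ (for $k$ large) would give $\norm{m-m_k}_\Omega\lesssim h_k\norm{m}_{H^1(\Omega)}+\norm{u-u_k}_{H^1(\Omega)}$.

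The main obstacle is that these two bounds couple in both directions, so a naive substitution does not close: the product of the coupling constants (involving $L_F$, $L_{H_p}$ and $\Mfty$) need not be less than one. The resolution I would pursue is to replace the iteration by the discrete $L^2$-norm stability of the monotone MFG system established in Section~\ref{sec:nonneg_approx}, in which the coercivity furnished by the strong monotonicity~\eqref{strong-mono-F-C1H} of $F$, the convexity of $H$, and the nonnegativity of both $m$ and $m_k$ combine the value and density residuals into a single controllable estimate; the computations above then supply precisely these residuals, measured in the weak norms that carry the extra factor $h_k$. I expect the delicate point to be exactly this interplay: the optimal $O(h_k)$ rate for $\norm{m-m_k}_\Omega$ forces the use of duality rather than a direct monotone energy argument (the latter would reintroduce $\inf_{\overline{m}_k\in V_k}\norm{m-\overline{m}_k}_{H^1(\Omega)}$ and hence be suboptimal when $m\notin H^2(\Omega)$), whereas the coupling between the two error components can only be tamed through the monotone structure and the nonnegativity of the densities.
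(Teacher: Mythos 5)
Your overall architecture (composite $L^2\times H^1$ norm, Aubin--Nitsche duality on the convex domain, monotonicity to tame the two-way coupling) matches the paper's strategy, and your bound for the value function is essentially Lemma~\ref{lemma-wk-H1-bound} combined with~\eqref{eq:dual_norm_res_R1}. The genuine gap is in your duality step for the density. You compare $m$ directly with $m_k$, and the resulting identity contains the term $\int_\Omega m_k\bigl(\frac{\partial H}{\partial p}[\nabla u_k]-\frac{\partial H}{\partial p}[\nabla u]\bigr)\cdot\nabla z_k\,\mathrm{d}x$, which you estimate using a uniform bound $\norm{m_k}_{L^\infty(\Omega)}\lesssim \Mfty$ that you attribute to~\ref{ass:dmp}. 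The discrete maximum principle only yields the one-sided bound $m_k\geq 0$; an upper $L^\infty$ bound on the discrete density uniform in $k$ is a discrete De Giorgi--Nash--Moser-type estimate that is neither assumed nor proved anywhere in the paper (constants are not supersolutions of the adjoint operator $L^*$, so the DMP gives no upper barrier). Attempts to avoid it by H\"older/interpolation (e.g.\ $m_k\in L^6$, $\abs{b_k-b}\leq\min(2L_H,L_{H_p}\abs{\nabla(u-u_k)})$) degrade the homogeneity and break the closing argument.

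The paper sidesteps this by introducing the auxiliary nonnegative approximation $\mkp\in\Vkp$ of~\eqref{eq:mkp_def}, the discrete solution of the KFP equation linearized at the \emph{exact} drift $\frac{\partial H}{\partial p}[\nabla u]$. The duality argument (Lemma~\ref{lemma-approxs-error-ests}) then compares $m$ with $\mkp$, where both problems carry the same drift, so no term of the form $\int m_k(b_k-b)\cdot\nabla z_k$ ever appears; the only $L^\infty$ bound used is~\eqref{m-ess-bound} for the \emph{continuous} density $m$ (in Lemma~\ref{lemma-E-k-specific-bound}). The same function $\mkp$ also resolves the vagueness in your final paragraph: Lemma~\ref{lemma-key-L2-bound} needs a nonnegative discrete comparison function, and the residual $\norm{R_k^2(\overline{m}_k,\overline{u}_k)}_{V_k^*}$ can be bounded using only the $L^2$-norm of the density error precisely for $\overline{m}_k=\mkp$ (Lemma~\ref{lemma-E-k-specific-bound}), not for a generic $\overline{m}_k\in\Vkp$, for which the $H^1$-norm would reappear and ruin the optimal rate when $m\notin H^2(\Omega)$. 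To repair your proof, introduce $\mkp$, run your duality argument on $m-\mkp$, and then follow the chain Lemma~\ref{lemma-zk-H1-bound} $\to$ Lemma~\ref{lemma-wk-H1-bound} $\to$ Lemma~\ref{lemma-key-L2-bound} $\to$ Young's inequality, all anchored at $\mkp$.
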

We prove Theorem~\ref{theorem-disc-error-smooth-H-lip-app} in Section~\ref{sec:main_thm_2}.
We now show that Theorem~\ref{theorem-disc-error-smooth-H-lip-app} implies an optimal asymptotic rate of convergence of the value function approximations that is independent of the regularity of $m$.
\begin{corollary}\label{corollary-optimal-convergence-rate-val-func-convex-domain}
In addition to the hypotheses of Theorem \ref{theorem-disc-error-smooth-H-lip-app}, suppose that  $u\in H^{1+s}(\Omega)$ for some $s\in [0,1]$. Then, there exists $k_0\in\mathbb{N}$ such that
\begin{equation}\label{uk-error-est-H2}
\|m-m_k\|_{\Omega}+\|u-u_k\|_{H^1(\Omega)}\lesssim h_k^{s} (\norm{m}_{H^{1}(\Omega)}+\norm{u}_{H^{1+s}(\Omega)}),
\end{equation}
for all $k\geq k_0$.
\end{corollary}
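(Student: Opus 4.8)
The plan is to deduce Corollary~\ref{corollary-optimal-convergence-rate-val-func-convex-domain} directly from Theorem~\ref{theorem-disc-error-smooth-H-lip-app} by estimating the best-approximation term. The starting point is the bound~\eqref{uk-error-est-convex}, which holds under the stated hypotheses (assumptions~\ref{ass:bounded}, \ref{ass:dmp}, and convexity of $\Omega$) for all $k\geq k_0$, and crucially imposes no regularity on the solution beyond the minimal $H^1$ regularity. The only task is therefore to convert the abstract infimum $\inf_{\overline{u}_k\in V_k}\norm{u-\overline{u}_k}_{H^1(\Omega)}$ into a quantitative rate under the additional hypothesis $u\in H^{1+s}(\Omega)$.

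First I would invoke a standard quasi-interpolation estimate for the $H^1_0$-conforming piecewise affine space $V_k$, exactly as in the proof of Corollary~\ref{corollary-optimal-convergence-rate}: using a suitable quasi-interpolation operator (e.g.\ \cite[Section~1.6.2]{ErnGuermond2004}) together with the theory of interpolation spaces (e.g.\ \cite[Chapter~14]{BrennerScott08}), the assumption $u\in H^{1+s}(\Omega)$ for $s\in[0,1]$ yields
\begin{equation}
	\inf_{\overline{u}_k\in V_k}\norm{u-\overline{u}_k}_{H^1(\Omega)}\lesssim h_k^s\norm{u}_{H^{1+s}(\Omega)}.
\end{equation}
Substituting this into~\eqref{uk-error-est-convex} gives
\begin{equation}
	\|m-m_k\|_{\Omega}+\|u-u_k\|_{H^1(\Omega)}\lesssim h_k^s\norm{u}_{H^{1+s}(\Omega)}+h_k\left(\|u\|_{H^1(\Omega)}+\|m\|_{H^1(\Omega)}\right).
\end{equation}

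It then remains only to absorb the lower-order term into the leading one. Since $s\in[0,1]$ and $h_k\to 0$, we have $h_k\leq h_k^s$ for all $k$ with $h_k\leq 1$, and the continuous embedding $H^{1+s}(\Omega)\hookrightarrow H^1(\Omega)$ gives $\norm{u}_{H^1(\Omega)}\lesssim\norm{u}_{H^{1+s}(\Omega)}$. Hence $h_k\norm{u}_{H^1(\Omega)}\lesssim h_k^s\norm{u}_{H^{1+s}(\Omega)}$, and combining the two gradient-of-$u$ contributions while keeping the $\|m\|_{H^1(\Omega)}$ term yields~\eqref{uk-error-est-H2}, valid for all $k\geq k_0$ (after possibly enlarging $k_0$ so that $h_k\leq 1$). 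I do not expect any genuine obstacle here: the entire content is in Theorem~\ref{theorem-disc-error-smooth-H-lip-app}, and this corollary is a routine interpolation-plus-absorption argument entirely parallel to Corollary~\ref{corollary-optimal-convergence-rate}. The one point worth noting is that, unlike the $H^1$-in-both-components estimate, here the density appears only through its fixed $H^1$-norm $\norm{m}_{H^1(\Omega)}$ multiplied by $h_k$, so no regularity on $m$ beyond $H^1$ enters, which is precisely the phenomenon the statement is designed to capture.
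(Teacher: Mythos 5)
Your proposal is correct and follows essentially the same route as the paper's own proof: invoke the quasi-interpolation estimate $\inf_{\overline{u}_k\in V_k}\norm{u-\overline{u}_k}_{H^1(\Omega)}\lesssim h_k^s\norm{u}_{H^{1+s}(\Omega)}$ exactly as in Corollary~\ref{corollary-optimal-convergence-rate}, then absorb the term $h_k(\norm{m}_{H^1(\Omega)}+\norm{u}_{H^1(\Omega)})$ into $h_k^s(\norm{m}_{H^{1}(\Omega)}+\norm{u}_{H^{1+s}(\Omega)})$. Your extra care about requiring $h_k\leq 1$ for the absorption is a minor point the paper leaves implicit, and it changes nothing of substance.
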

\begin{proof}
{As explained in the proof of Corollary~\ref{corollary-optimal-convergence-rate}, if} $u\in H^{1+s}(\Omega)$ for $s\in [0,1]$ then $\inf_{\overline{u}_k\in V_k}\norm{u-\overline{u}_k}_{H^1(\Omega)}\lesssim h_k^s \norm{u}_{H^{1+s}(\Omega)}$.
Moreover, it is clear that $h_k(\norm{m}_{H^1(\Omega)}+\norm{u}_{H^1(\Omega)})\lesssim h_k^s (\norm{m}_{H^{1}(\Omega)}+\norm{u}_{H^{1+s}(\Omega)})$.
Applying these bounds to~\eqref{uk-error-est-convex} then yields~\eqref{uk-error-est-H2}.
\end{proof}

\section{Stability of discrete HJB and KFP equations}\label{sec:residuals}
In this section, we start the analysis of the approximations defined by discrete MFG system~\eqref{weakdisc-C1H} by showing the stability of the discrete HJB and KFP equations considered separately. 
The stability of these discrete problems can be expressed in terms of discrete residuals of the equations, which we now define.
For each $k\in\mathbb{N}$, introduce the discrete residual operators $R_k^1,R_k^2:V_k\times V_k\to V_k^*$ given {by}
\begin{subequations}\label{FEM-Res}
\begin{align}
\langle R_k^1(\overline{m}_k,\overline{u}_k),\psi_k\rangle_{V_k^*\times V_k}&\coloneqq \langle F[\overline{m}_k],\psi_k\rangle_{H^{-1}\times H_0^1} - \int_{\Omega}{\left(A_k\nabla \overline{u}_k\cdot\nabla \psi_k+ H[\nabla \overline{u}_k]\psi_k\right)}\mathrm{d}x, \label{FEM-Res-HJB}
\\
\langle R_k^2(\overline{m}_k,\overline{u}_k),\phi_k\rangle_{V_k^*\times V_k}&\coloneqq \langle G,\phi_k\rangle_{H^{-1}\times H_0^1} - \int_{\Omega}{\left(A_k\nabla \overline{m}_k\cdot\nabla \phi_k+ \overline{m}_k\frac{\partial H}{\partial p}[\nabla \overline{u}_k]\cdot\nabla \phi_k\right)}\mathrm{d}x ,\label{FEM-Res-KFP}
\end{align}
\end{subequations}
for all $\overline{m}_k,\overline{u}_k,\psi_k,\phi_k\in V_k$. {Since the pair $(m_k,u_k)\in V_k\times V_k$ satisfies the discrete problem \eqref{weakdisc-C1H}, it is clear that $R_k^1({m}_k,{u}_k)=0$ and $R_k^2({m}_k,{u}_k)=0$ in $V_k^*$.}

We now show that the discrete residuals are bounded by the approximation error plus some additional terms coming from the stabilization.
\begin{lemma}\label{lemma-Ek-general-est}
Assume~\ref{ass:bounded}.
For all $k\in\N$, and all $\overline{u}_k,\overline{m}_k\in V_k$, we have
\begin{subequations}\label{eq:dual-norm-residuals}
\begin{align} 
	\|R_k^1(\overline{m}_k,\overline{u}_k)\|_{V_k^*}&\lesssim \|m-\overline{m}_k\|_{\Omega}+\|u-\overline{u}_k\|_{H^1(\Omega)}+\|{\Dk}\nabla u\|_{\Omega},\label{eq:dual_norm_res_R1}
	\\ \|R_k^2(\overline{m}_k,\overline{u}_k)\|_{V_k^*}&\lesssim \|m-\overline{m}_k\|_{H^1(\Omega)}+{\|u-\overline{u}_k\|_{H^1(\Omega)}}+\|{\Dk}\nabla m\|_{\Omega},\label{eq:dual_norm_res_R2}  
\end{align}
\end{subequations}
where the hidden constants depend only on $d$, $\Omega$, $L_F$, $L_{H_p}$, $L_H$, $\nu$, $\CD$, {and $\Mfty$.} 
\end{lemma}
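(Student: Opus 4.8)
The plan is to reduce both residuals to a sum of easily-estimated terms by subtracting the continuous weak form \eqref{weakform-C1H}. Since $V_k\subset H_0^1(\Omega)$, any $\psi_k,\phi_k\in V_k$ is an admissible test function in \eqref{weakform1-C1H} and \eqref{weakform2-C1H}. Substituting $\langle F[m],\psi_k\rangle_{H^{-1}\times H_0^1}=\int_\Omega \nu\nabla u\cdot\nabla\psi_k+H[\nabla u]\psi_k\,\dd x$ into \eqref{FEM-Res-HJB}, and recalling $A_k=\nu\mathbb{I}_d+\Dk$ from \eqref{eq:Ak_def}, I would rewrite the first residual as
\begin{align*}
\langle R_k^1(\overline{m}_k,\overline{u}_k),\psi_k\rangle_{V_k^*\times V_k}
&= \langle F[\overline{m}_k]-F[m],\psi_k\rangle_{H^{-1}\times H_0^1}
+ \int_\Omega \nu\nabla(u-\overline{u}_k)\cdot\nabla\psi_k\,\dd x \\
&\quad + \int_\Omega \big(H[\nabla u]-H[\nabla\overline{u}_k]\big)\psi_k\,\dd x
- \int_\Omega \Dk\nabla\overline{u}_k\cdot\nabla\psi_k\,\dd x.
\end{align*}
The first three terms are bounded (after dividing by $\|\psi_k\|_{H^1(\Omega)}$) by $L_F\|m-\overline{m}_k\|_\Omega$, $\nu\|u-\overline{u}_k\|_{H^1(\Omega)}$ and $L_H\|u-\overline{u}_k\|_{H^1(\Omega)}$, using respectively the Lipschitz continuity \eqref{F2} of $F\colon L^2\to H^{-1}$, the Cauchy--Schwarz inequality, and the Lipschitz bound \eqref{bounds:lipschitz} on $H$. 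Note that only the $L^2$-norm of $m-\overline{m}_k$ enters here, since $F$ maps $L^2$ into $H^{-1}$.

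The delicate point is the stabilization term, where I want $\|\Dk\nabla u\|_\Omega$ rather than $\|\Dk\nabla\overline{u}_k\|_\Omega$. The trick is to write $\nabla\overline{u}_k=\nabla u-\nabla(u-\overline{u}_k)$, so that
\[
\int_\Omega \Dk\nabla\overline{u}_k\cdot\nabla\psi_k\,\dd x
= \int_\Omega \Dk\nabla u\cdot\nabla\psi_k\,\dd x
- \int_\Omega \Dk\nabla(u-\overline{u}_k)\cdot\nabla\psi_k\,\dd x.
\]
The first integral is controlled by $\|\Dk\nabla u\|_\Omega\|\psi_k\|_{H^1(\Omega)}$, producing exactly the stabilization term of \eqref{eq:dual_norm_res_R1}. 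For the second, I use $\abs{\Dk}\leq\CD\htk\leq\CD h_k$ from \ref{ass:bounded}; since $h_k\to 0$ the norm $\|\Dk\|_{L^\infty}$ is uniformly bounded, so this term is $\lesssim\|u-\overline{u}_k\|_{H^1(\Omega)}\|\psi_k\|_{H^1(\Omega)}$ and is absorbed. Taking the supremum over $\psi_k$ with $\|\psi_k\|_{H^1(\Omega)}=1$ yields \eqref{eq:dual_norm_res_R1}.

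For the second residual, substituting \eqref{weakform2-C1H} produces the analogous diffusion and stabilization terms (handled identically, now giving $\nu\|m-\overline{m}_k\|_{H^1(\Omega)}$ and $\|\Dk\nabla m\|_\Omega$), together with the difference of drift terms. The latter I would split as
\[
m\tfrac{\partial H}{\partial p}[\nabla u]-\overline{m}_k\tfrac{\partial H}{\partial p}[\nabla\overline{u}_k]
= m\Big(\tfrac{\partial H}{\partial p}[\nabla u]-\tfrac{\partial H}{\partial p}[\nabla\overline{u}_k]\Big)
+ (m-\overline{m}_k)\tfrac{\partial H}{\partial p}[\nabla\overline{u}_k].
\]
The first piece is bounded by $\Mfty L_{H_p}\|u-\overline{u}_k\|_{H^1(\Omega)}\|\phi_k\|_{H^1(\Omega)}$ using the essential boundedness $\|m\|_{L^\infty(\Omega)}\leq\Mfty$ from \eqref{m-ess-bound} together with the Lipschitz bound \eqref{eq:Hp_bound}; the second by $L_H\|m-\overline{m}_k\|_\Omega\|\phi_k\|_{H^1(\Omega)}$ using the uniform bound \eqref{h-deriv-linf-bound}. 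Collecting and taking the supremum over $\phi_k$ gives \eqref{eq:dual_norm_res_R2}. The two main obstacles are precisely (i) the add-and-subtract manipulation that trades $\overline{u}_k,\overline{m}_k$ for the exact $u,m$ inside the stabilization terms, made harmless by the uniform smallness of $\Dk$ in \ref{ass:bounded}; and (ii) controlling the product $m(\tfrac{\partial H}{\partial p}[\nabla u]-\tfrac{\partial H}{\partial p}[\nabla\overline{u}_k])$ in $L^2$, which is exactly where the $L^\infty$ bound on $m$ from Remark~\ref{rem-m-Linf-bound} is indispensable.
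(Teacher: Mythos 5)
Your proof is correct and follows essentially the same route as the paper: subtract the continuous weak form, split off the stabilization term so that only $\|\Dk\nabla u\|_\Omega$ (resp.\ $\|\Dk\nabla m\|_\Omega$) survives while the $\Dk\nabla(u-\overline{u}_k)$ piece is absorbed via~\ref{ass:bounded}, and decompose the drift difference exactly as in the paper, using \eqref{h-deriv-linf-bound}, \eqref{eq:Hp_bound} and the $L^\infty$ bound \eqref{m-ess-bound} on $m$. The only cosmetic difference is that the paper groups the diffusion terms as $\int_\Omega A_k\nabla(u-\overline{u}_k)\cdot\nabla\psi_k\,\dd x-\int_\Omega \Dk\nabla u\cdot\nabla\psi_k\,\dd x$, which is algebraically identical to your add-and-subtract.
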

\begin{proof}
Since $(u,m)$ solves~\eqref{weakform-C1H}, we have
\begin{multline}\label{res-est-1}
\langle {R}_k^1(\overline{m}_k,\overline{u}_k),\psi_k\rangle_{V_k^*\times V_k}=\langle F[\overline{m}_k]-F[m],\psi_k\rangle_{H^{-1}\times H_0^1}+\int_{\Omega}A_k\nabla (u-\overline{u}_k)\cdot\nabla \psi_k\mathrm{d}x
\\+\int_{\Omega}(H[\nabla u]-H[\nabla \overline{u}_k])\psi_k\mathrm{d}x-\int_{\Omega}D_k\nabla u\cdot\nabla \psi_k\mathrm{d}x, 
\end{multline}
for all $\psi_k\in V_k$.
The Lipschitz continuity of $F$ in~\eqref{F2} implies that~$\langle F[\overline{m}_k]-F[m],\psi_k\rangle_{H^{-1}\times H_0^1}\leq L_F \norm{\overline{m}_k-m_k}_\Omega \norm{\psi_k}_{H^1(\Omega)}$ for all $\psi_k\in V_k$.
The definition of $A_k$ in~\eqref{eq:Ak_def} and the bound on $D_k$ in from~\ref{ass:bounded} then imply that $\abs{\int_{\Omega}A_k\nabla (u-\overline{u}_k)\cdot\nabla \psi_k\mathrm{d}x}\lesssim \norm{u-\overline{u}_k}_{H^1(\Omega)}\norm{\psi_k}_{H^1(\Omega)}$ for all $\psi_k\in V_k$, for some hidden constant that depends only on $d$, $\nu$, $\CD$ and on $\Omega$.
The Lipschitz continuity of $H$ in~\eqref{bounds:lipschitz} implies that $\abs{\int_{\Omega}(H[\nabla u]-H[\nabla \overline{u}_k])\psi_k\mathrm{d}x}\lesssim \norm{u-\overline{u}_k}_{H^1(\Omega)}\norm{\psi_k}_{H^1(\Omega)} $ for all $\psi_k\in V_k$.
The Cauchy--Schwarz inequality implies that $\abs{\int_{\Omega}\Dk\nabla u\cdot\nabla \psi_k\mathrm{d}x}\leq \norm{\Dk \nabla u}_{\Omega}\norm{\psi_k}_{H^1(\Omega)}$ for all $\psi_k\in V_k$. Combining these bounds then yields~\eqref{eq:dual_norm_res_R1}.

Next, we have
\begin{multline}\label{res-est-2}
\langle {R}_k^2(\overline{m}_k,\overline{u}_k),\phi_k\rangle_{V_k^*\times V_k}=\int_{\Omega}A_k\nabla (m-\overline{m}_k)\cdot\nabla \phi_k+(m-\overline{m}_k)\frac{\partial H}{\partial p}[\nabla \overline{u}_k]\cdot\nabla \phi_k\mathrm{d}x
\\
-\int_{\Omega}D_k\nabla m\cdot\nabla \phi_k\mathrm{d}x+\int_{\Omega}m\left(\frac{\partial H}{\partial p}[\nabla u] - \frac{\partial H}{\partial p}[\nabla \overline{u}_k]\right)\cdot\nabla \phi_k\mathrm{d}x ,
\end{multline}
for all $\phi_k\in V_k$.
We then obtain~\eqref{eq:dual_norm_res_R2} similarly as above, where we additionally use the bound $\norm{\frac{\partial H}{\partial p}[\nabla \overline{u}_k}_{L^\infty(\Omega,\R^\dim)}\leq L_H$ which follows from~\eqref{h-deriv-linf-bound}, along with the Lipschitz continuity of $\frac{\partial H}{\partial p}$ in~\eqref{eq:Hp_bound} and the $L^\infty$-bound on $m$ in~\eqref{m-ess-bound}.
\end{proof}

We now turn to the (local) stability of the discrete HJB equation~\eqref{weakformdisc1}.
\begin{lemma}[Stability of discrete HJB equation]\label{lemma-wk-H1-bound}
{Assume \ref{ass:bounded} and \ref{ass:dmp}.}
There exists a constant $R>0$ that is independent of $k\in\mathbb{N}$ such that 
\begin{equation}\label{val-func-approx-err}
\|\overline{u}_k-u_k\|_{H^1(\Omega)}\lesssim \|R_k^1(\overline{m}_k,\overline{u}_k)\|_{V_k^*}+\|\overline{m}_k-m_k\|_{\Omega},
\end{equation}
for any $\overline{m}_k\in V_k$, and for any $\overline{u}_k\in V_k$ satisfying $\|\overline{u}_k-u_k\|_{H^1(\Omega)}\leq R$. The hidden constant in~\eqref{val-func-approx-err} depends only on $L_F$ and $\Cstab$.
\end{lemma}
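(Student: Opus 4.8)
The plan is to linearize the discrete HJB equation around the exact discrete solution $u_k$ and reduce the bound to the uniform invertibility of the linearized operator provided by Lemma~\ref{lemma-uniform-L-inv-bound-disc}. Writing $e_k\coloneqq \overline{u}_k-u_k$, I would first subtract the discrete HJB equation~\eqref{weakformdisc1} satisfied by $u_k$ from the definition~\eqref{FEM-Res-HJB} of the residual to obtain, for all $\psi_k\in V_k$,
\begin{equation}
\langle R_k^1(\overline{m}_k,\overline{u}_k),\psi_k\rangle_{V_k^*\times V_k}=\langle F[\overline{m}_k]-F[m_k],\psi_k\rangle_{H^{-1}\times H_0^1}-\int_{\Omega}A_k\nabla e_k\cdot\nabla\psi_k+(H[\nabla\overline{u}_k]-H[\nabla u_k])\psi_k\,\mathrm{d}x.
\end{equation}

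Next, I would introduce the frozen-coefficient drift $\tilde{b}\coloneqq\frac{\partial H}{\partial p}[\nabla u_k]$, which satisfies $\|\tilde{b}\|_{L^\infty(\Omega;\R^d)}\leq L_H$ by~\eqref{h-deriv-linf-bound}, so that the associated operator $L\in W(V_k,\Dk)$ defined through~\eqref{L-operator} is admissible. Using the semismoothness estimate of Lemma~\ref{lemma-semismooth-tech-result}, I would split $H[\nabla\overline{u}_k]-H[\nabla u_k]=\tilde{b}\cdot\nabla e_k+r_k$, where the remainder $r_k\coloneqq R_H[\nabla\overline{u}_k,\nabla u_k]$ satisfies $\|r_k\|_{H^{-1}(\Omega)}\leq\epsilon\|e_k\|_{H^1(\Omega)}$ whenever $\|e_k\|_{H^1(\Omega)}\leq R$, for a threshold $R$ depending only on $\epsilon$, $\Omega$, $L_H$ and $L_{H_p}$. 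Substituting this into the identity above rearranges it into $\langle L e_k,\psi_k\rangle_{V_k^*\times V_k}=\langle F[\overline{m}_k]-F[m_k],\psi_k\rangle_{H^{-1}\times H_0^1}-\langle r_k,\psi_k\rangle-\langle R_k^1(\overline{m}_k,\overline{u}_k),\psi_k\rangle$, and taking dual norms gives $\|L e_k\|_{V_k^*}\leq L_F\|\overline{m}_k-m_k\|_\Omega+\epsilon\|e_k\|_{H^1(\Omega)}+\|R_k^1(\overline{m}_k,\overline{u}_k)\|_{V_k^*}$, where I use the Lipschitz bound~\eqref{F2} on $F$ together with $\|\cdot\|_{V_k^*}\leq\|\cdot\|_{H^{-1}(\Omega)}$ on $V_k$.

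Finally, since $L\in W(V_k,\Dk)$, Lemma~\ref{lemma-uniform-L-inv-bound-disc} yields $\|e_k\|_{H^1(\Omega)}=\|L^{-1}Le_k\|_{V_k}\leq\Cstab\|Le_k\|_{V_k^*}$, whence
\begin{equation}
\|e_k\|_{H^1(\Omega)}\leq\Cstab\left(L_F\|\overline{m}_k-m_k\|_\Omega+\|R_k^1(\overline{m}_k,\overline{u}_k)\|_{V_k^*}\right)+\Cstab\,\epsilon\,\|e_k\|_{H^1(\Omega)}.
\end{equation}
Choosing $\epsilon\coloneqq 1/(2\Cstab)$ fixes the threshold $R$ independently of $k$ (as $\Cstab$ is $k$-independent) and lets me absorb the last term on the left, giving~\eqref{val-func-approx-err} with hidden constant $2\Cstab\max\{L_F,1\}$, which depends only on $L_F$ and $\Cstab$ as claimed.

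I expect the main obstacle to be the bookkeeping that makes the absorption uniform in $k$: the linearization must be taken at $u_k$ rather than at $\overline{u}_k$ precisely so that the small-error hypothesis $\|e_k\|_{H^1(\Omega)}\leq R$ activates the semismoothness bound, and I must check that the drift $\tilde{b}=\frac{\partial H}{\partial p}[\nabla u_k]$ lands in the admissible class $W(V_k,\Dk)$ so that the uniform inverse bound $\Cstab$ applies. Since $\epsilon$ is chosen from $\Cstab$ alone and $R$ from $\epsilon$ via Lemma~\ref{lemma-semismooth-tech-result}, all constants and the radius $R$ remain independent of $k$.
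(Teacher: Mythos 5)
Your proof is correct and follows essentially the same route as the paper: linearize the discrete HJB equation, invoke the uniform inverse bound of Lemma~\ref{lemma-uniform-L-inv-bound-disc} for an operator in $W(V_k,\Dk)$, and absorb the semismoothness remainder via Lemma~\ref{lemma-semismooth-tech-result} with $\epsilon=(2\Cstab)^{-1}$. The only difference is that you freeze the drift at $u_k$ whereas the paper uses $\tilde b=\frac{\partial H}{\partial p}[\nabla\overline{u}_k]$; contrary to your closing remark, either choice works, since the smallness hypothesis of Lemma~\ref{lemma-semismooth-tech-result} is symmetric in $v$ and $w$.
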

\begin{proof}[Proof of Lemma \ref{lemma-wk-H1-bound}]
Applying Lemma~\ref{lemma-uniform-L-inv-bound-disc} to the operator $L\in W(V_k,\Dk)$ for the case $\tilde{b}=\frac{\partial H}{\partial p}[\nabla \overline{u}_k]${, we employ the equivalent inf-sup stability bound \eqref{inf-sup-op-bound-1-equiv} with $v_k\coloneqq \overline{u}_k-u_k$ to get}
\begin{equation*}
	\begin{split}
		&\|\overline{u}_k-u_k\|_{H^1(\Omega)}
		\\
		&\leq \Cstab\sup_{\substack{\psi_k\in V_k:\\ \|\psi_k\|_{H^1(\Omega)}=1}}\left[\int_{\Omega}A_k\nabla (\overline{u}_k-u_k)\cdot\nabla \psi_k-\frac{\partial H}{\partial p}[\nabla \overline{u}_k]\cdot\nabla (u_k-\overline{u}_k) \psi_k\mathrm{d}x\right].
	\end{split}
\end{equation*}
Using the discrete HJB equation of \eqref{weakdisc-C1H}, we find that 
\begin{multline*}
\norm{\overline{u}_k-u_k}_{H^1(\Omega)} \\
\leq \Cstab \sup_{\substack{\psi_k\in V_k:\\ \|\psi_k\|_{H^1(\Omega)}=1}}\left[
\langle F[\overline{m}_k]-F[m_k],\psi_k\rangle_{H^{-1}\times H_0^1} - \langle R_k^1(\overline{m}_k,\overline{u}_k),\psi_k\rangle_{V_k^*\times V_k}
\right. 
\\\left. +\int_{\Omega}\left(H[\nabla u_k]-H[\nabla \overline{u}_k]-\frac{\partial H}{\partial p}[\nabla \overline{u}_k]\cdot\nabla (u_k-\overline{u}_k)\right)\psi_k\mathrm{d}x  \right].
\end{multline*}
Hence, the Lipschitz continuity of $F$, c.f.\ \eqref{F2}, and the triangle inequality imply that	
\begin{multline}\label{eq:val-func-approx-err_1}
\|\overline{u}_k-u_k\|_{H^1(\Omega)} \leq \Cstab \|R_k^1(\overline{m}_k,\overline{u}_k)\|_{V_k^*} + \Cstab L_F\norm{\overline{m}_k-m_k}_\Omega
\\ + \Cstab \left\lVert H[\nabla u_k] - H[\nabla \overline{u}_k] - \frac{\partial H}{\partial p}[\nabla \overline{u}_k]\cdot\nabla (u_k-\overline{u}_k)\right\rVert_{H^{-1}(\Omega)}.
\end{multline}
Next, we apply Lemma~\ref{lemma-semismooth-tech-result} with $\epsilon=(2\Cstab)^{-1}$ in order to absorb the last term on the right-hand side of~\eqref{eq:val-func-approx-err_1} into the left-hand side, and thereby deduce that there exists $R>0$ such that~\eqref{val-func-approx-err} holds whenever $\norm{\overline{u}_k-u_k}_{H^1(\Omega)}\leq R$.
\end{proof}

We now consider the stability of the discrete KFP equation~\eqref{weakformdisc2}.
\begin{lemma}[Stability of discrete KFP equation]\label{lemma-zk-H1-bound}
Assume~\ref{ass:bounded} and \ref{ass:dmp}.
Then for all $k\in\N$ and all $\overline{m}_k\in V_k$, we have
\begin{equation}\label{eq:mk_H1_bound}
	\norm{m_k-\overline{m}_k}_{H^1(\Omega)}\lesssim 
	\norm{R_k^2(\overline{m}_k,u_k)}_{V_k^*},
\end{equation}
where the constant depends only on $\Cstab$.
\end{lemma}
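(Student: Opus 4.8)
The plan is as follows.

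The key observation is that the discrete KFP equation is itself a linear problem of exactly the form governed by the operator class $W(V_k,\Dk)$. Specifically, the bilinear form appearing in~\eqref{weakformdisc2} is
\[
	(v_k,\phi_k)\mapsto \int_\Omega A_k \nabla v_k\cdot\nabla\phi_k + v_k \frac{\partial H}{\partial p}[\nabla u_k]\cdot\nabla\phi_k\,\dd x,
\]
but to match the template~\eqref{L-operator} we must place the first-order term on the \emph{test} function rather than the trial function. The natural move is therefore to work with the \emph{adjoint} operator: define $L\in W(V_k,\Dk)$ by $\langle Lv_k,w_k\rangle_{V_k^*\times V_k} \coloneqq \int_\Omega A_k\nabla v_k\cdot\nabla w_k + \frac{\partial H}{\partial p}[\nabla u_k]\cdot\nabla v_k\, w_k\,\dd x$, with $\tilde b = \frac{\partial H}{\partial p}[\nabla u_k]$. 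This is a legitimate element of $W(V_k,\Dk)$ since~\eqref{h-deriv-linf-bound} gives $\|\tilde b\|_{L^\infty(\Omega;\R^d)}\leq L_H$. Then the operator acting in the KFP equation is precisely $L^*$, because $\langle L^* m_k, \phi_k\rangle_{V_k^*\times V_k} = \langle L\phi_k, m_k\rangle_{V_k^*\times V_k} = \int_\Omega A_k\nabla m_k\cdot\nabla\phi_k + m_k\frac{\partial H}{\partial p}[\nabla u_k]\cdot\nabla\phi_k\,\dd x$.

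With this identification, the strategy is direct. First I would apply Lemma~\ref{lemma-uniform-L-inv-bound-disc} to obtain $\|{L^*}^{-1}\|_{\mathcal{L}(V_k^*,V_k)}\leq \Cstab$, so that for any $v_k\in V_k$ one has $\|v_k\|_{H^1(\Omega)}\leq \Cstab\|L^* v_k\|_{V_k^*}$. Applying this with $v_k = m_k-\overline{m}_k$ gives
\[
	\norm{m_k-\overline{m}_k}_{H^1(\Omega)} \leq \Cstab\, \norm{L^*(m_k-\overline{m}_k)}_{V_k^*}.
\]
It then remains to show that $L^*(m_k-\overline{m}_k) = R_k^2(\overline{m}_k,u_k)$ as elements of $V_k^*$. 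This is a short computation: since $m_k$ solves the discrete KFP equation~\eqref{weakformdisc2}, we have $\langle L^* m_k,\phi_k\rangle_{V_k^*\times V_k}=\langle G,\phi_k\rangle_{H^{-1}\times H_0^1}$ for all $\phi_k\in V_k$, and subtracting the definition of $\langle L^*\overline{m}_k,\phi_k\rangle_{V_k^*\times V_k}$ recovers exactly the residual~\eqref{FEM-Res-KFP} evaluated at $(\overline{m}_k,u_k)$. Combining the two displays yields~\eqref{eq:mk_H1_bound} with a constant depending only on $\Cstab$.

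I expect no genuine obstacle here, unlike the HJB stability lemma: because the KFP equation is \emph{linear} in $m_k$ (the coefficient $\frac{\partial H}{\partial p}[\nabla u_k]$ is frozen, with $u_k$ the already-computed discrete value function), there is no nonlinear remainder term to absorb, and hence no smallness restriction of the form $\|\overline{m}_k-m_k\|_{H^1(\Omega)}\leq R$ is needed — the bound is global in $\overline{m}_k$, which matches the statement. The only point requiring care is the bookkeeping of which operator is the adjoint: one must check that the uniform bound in Lemma~\ref{lemma-uniform-L-inv-bound-disc} is stated for \emph{both} $L^{-1}$ and ${L^*}^{-1}$, which it is, so invoking it for $L^*$ is justified.
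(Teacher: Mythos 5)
Your proof is correct and is essentially the same argument as the paper's: both identify the discrete KFP bilinear form (with the drift $\frac{\partial H}{\partial p}[\nabla u_k]$ frozen) as the adjoint $L^*$ of an operator $L\in W(V_k,\Dk)$, invoke the uniform bound on $\norm{{L^*}^{-1}}_{\mathcal{L}(V_k^*,V_k)}$ from Lemma~\ref{lemma-uniform-L-inv-bound-disc}, and use the discrete KFP equation to identify $L^*(m_k-\overline{m}_k)$ with the residual $R_k^2(\overline{m}_k,u_k)$. Your remark that linearity in $m_k$ removes any smallness restriction (in contrast to the HJB stability lemma) is also accurate.
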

\begin{proof}
Let $k\in\N $ and let $\overline{m}_k\in V_k$ be arbitrary. Then, {the equivalent inf-sup stability bound \eqref{inf-sup-op-bound-2-equiv} implied by} Lemma~\ref{lemma-uniform-L-inv-bound-disc}{,with $w_k\coloneqq m_k - \overline{m}_k$, gives}
\begin{equation*}
\begin{aligned}
	&\norm{m_k - \overline{m}_k}_{H^1(\Omega)} \\
	&\lesssim \sup_{\substack{\phi_k\in V_k:\\ \|\phi_k\|_{H^1(\Omega)}=1}}\left[ \int_{\Omega} A_k\nabla(m_k - \overline{m}_k){\cdot}\nabla \phi_k + (m_k - \overline{m}_k){\frac{\partial H}{\partial p}[\nabla u_k]}{\cdot}\nabla \phi_k \mathrm{d}x\right]
	\\ &=  \sup_{\substack{\phi_k\in V_k:\\ \|\phi_k\|_{H^1(\Omega)}=1}}\left[ 
	\langle G,\phi_k\rangle_{H^{-1}\times H^1_0} - \int_{\Omega} A_k\nabla\overline{m}_k{\cdot}\nabla \phi_k+ \overline{m}_k\frac{\partial H}{\partial p}[\nabla u_k]{\cdot}\nabla \phi_k\mathrm{d}x  \right]
	\\ &= \sup_{\substack{\phi_k\in V_k:\\ \|\phi_k\|_{H^1(\Omega)}=1}} \langle R_k^2(\overline{m}_k,u_k),\psi_k\rangle_{V_k^*\times V_k} = \norm{R_k^2(\overline{m}_k,u_k)}_{V_k^*},
\end{aligned}
\end{equation*}
where we have used the discrete KFP equation of \eqref{weakdisc-C1H} in the second line above.
\end{proof}

\section{Nonnegative approximations of the density}\label{sec:nonneg_approx}
It is well-known that the nonnegativity of the density and its discrete approximation respectively play important roles in the proofs of uniqueness of the solution of the MFG system~\eqref{weakform-C1H} and its discrete approximation~\eqref{weakdisc-C1H}, see e.g.\ \cite{lasry2007mean} and \cite{osborne2022analysis,osborne2024erratum}. 
It is therefore not surprising that nonnegative approximations of the density will play an important part in the error analysis.

Let $\Vkp$ denote the set of functions in $V_k$ that are nonnegative in $\Omega$. 
First, we show in Lemma~\ref{lemma-infSk-equiv-inf} below that the density $m$ can be approximated nearly quasi-optimally, up to stabilization terms, by discrete functions in $\Vkp$.
To this end, {let us assume~\ref{ass:bounded} and~\ref{ass:dmp}. We} define $\mkp\in V_k$ {to be} the unique solution of
\begin{equation}\label{eq:mkp_def}
\int_{\Omega}A_k\nabla \mkp\cdot\nabla \phi_k+\mkp\frac{\partial H}{\partial p}[\nabla u]\cdot\nabla \phi_k\mathrm{d}x=\langle G,\phi_k\rangle_{H^{-1}\times H_0^1}\quad\forall \phi_k\in V_k.
\end{equation}
Note that Lemma~\ref{lemma-uniform-L-inv-bound-disc} implies that $\mkp$ is well-defined. 
Moreover, under the assumption~\ref{ass:dmp}, we further have $\mkp \in \Vkp$ since $G$ is nonnegative in the sense of distributions.
It is also straightforward to show that {{$\mkp$} satisfies $\norm{\mkp}_{H^1(\Omega)}\lesssim \norm{m}_{H^1(\Omega)}$ for some constant that depends only on $\Omega$, $\nu$, $L_H$, and $\Cstab$.} {Indeed, using the weak KFP equation \eqref{weakform2-C1H}, we can rewrite \eqref{eq:mkp_def} as
\begin{multline}\label{eq:mkp-alt}
		\int_{\Omega}A_k\nabla \mkp\cdot\nabla \phi_k+\mkp\frac{\partial H}{\partial p}[\nabla u]\cdot\nabla \phi_k\mathrm{d}x
		\\
		=\int_{\Omega}\nu\nabla m\cdot\nabla \phi_k+m\frac{\partial H}{\partial p}[\nabla u]\cdot\nabla \phi_k \text{ }\mathrm{d}x\quad\forall \phi_k\in V_k.
\end{multline}
It is clear by \eqref{h-deriv-linf-bound} that 
\begin{equation}
	\sup_{{\phi_k\in V_k:\, \|\phi_k\|_{H^1(\Omega)}=1}}\int_{\Omega}\nu\nabla m\cdot\nabla \phi_k+m\frac{\partial H}{\partial p}[\nabla u]\cdot\nabla \phi_k \text{ }\mathrm{d}x\lesssim \|m\|_{H^1(\Omega)}
\end{equation}
where the hidden constant depends only on $\nu$ and $L_H$. We then immediately conclude from \eqref{eq:mkp-alt} and Lemma~\ref{lemma-uniform-L-inv-bound-disc} that  $\norm{\mkp}_{H^1(\Omega)}\lesssim \norm{m}_{H^1(\Omega)}$ for some constant that depends only on $\Omega$, $\nu$, $L_H$, and $\Cstab$.
}

\begin{lemma}\label{lemma-infSk-equiv-inf}
Assume~\ref{ass:bounded} and~\ref{ass:dmp}.
Then, for all $k\in \N$,
\begin{equation}\label{eq:infSk-equiv}
\inf_{\overline{m}_k\in\Vkp}\|m-\overline{m}_k\|_{H^1(\Omega)}\lesssim \inf_{\overline{m}_k\in V_k}\|m-\overline{m}_k\|_{H^1(\Omega)}+{\norm{\Dk\nabla m}_{\Omega}}.
\end{equation}
The hidden constant in~\eqref{eq:infSk-equiv} depends only on $d$, $\Omega$, $\nu$, $L_H$, $\Cstab$, and $\CD$.
\end{lemma}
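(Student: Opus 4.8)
The goal is to bound the best approximation error of $m$ from the cone $\Vkp$ of nonnegative discrete functions by the unconstrained best approximation error plus the stabilization term $\norm{\Dk\nabla m}_\Omega$. The natural candidate for a nonnegative approximant is $\mkp$, the solution of the discrete KFP-type problem~\eqref{eq:mkp_def}, which is already known to lie in $\Vkp$ by assumption~\ref{ass:dmp} together with the nonnegativity of $G$ and of the exact density $m$. The plan is therefore to estimate $\norm{m-\mkp}_{H^1(\Omega)}$ and show it is controlled by the right-hand side of~\eqref{eq:infSk-equiv}.

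First I would insert an arbitrary unconstrained approximant $\overline{m}_k\in V_k$ via the triangle inequality, writing $\norm{m-\mkp}_{H^1(\Omega)}\leq \norm{m-\overline{m}_k}_{H^1(\Omega)}+\norm{\overline{m}_k-\mkp}_{H^1(\Omega)}$. Since $\overline{m}_k-\mkp\in V_k$, the second term can be estimated by the uniform stability bound of Lemma~\ref{lemma-uniform-L-inv-bound-disc}, applied to the operator $L\in W(V_k,\Dk)$ with drift $\tilde{b}=\frac{\partial H}{\partial p}[\nabla u]$ (whose $L^\infty$-norm is bounded by $L_H$ thanks to~\eqref{h-deriv-linf-bound}, so that $L$ indeed belongs to $W(V_k,\Dk)$). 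Thus $\norm{\overline{m}_k-\mkp}_{H^1(\Omega)}\leq \Cstab \norm{L(\overline{m}_k-\mkp)}_{V_k^*}$, and the task reduces to bounding this dual-norm residual.

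Next I would expand $\langle L(\overline{m}_k-\mkp),\phi_k\rangle$ by subtracting the defining equation~\eqref{eq:mkp_def} for $\mkp$ and inserting the continuous KFP equation~\eqref{weakform2-C1H} satisfied by $m$ (tested against the conforming $\phi_k\in V_k$). After cancelling the common data term $\langle G,\phi_k\rangle$, the residual becomes a sum of integrals of the form $\int_\Omega A_k\nabla(\overline{m}_k-m)\cdot\nabla\phi_k$, the convective term $\int_\Omega(\overline{m}_k-m)\frac{\partial H}{\partial p}[\nabla u]\cdot\nabla\phi_k$, and crucially a leftover term $\int_\Omega \Dk\nabla m\cdot\nabla\phi_k$ arising because $A_k=\nu\mathbb{I}_d+\Dk$ while the continuous equation only has the $\nu\mathbb{I}_d$ part. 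The first two integrals are controlled by $\norm{m-\overline{m}_k}_{H^1(\Omega)}\norm{\phi_k}_{H^1(\Omega)}$ using assumption~\ref{ass:bounded} (which bounds $\abs{A_k}\lesssim \nu + \CD h_{\Tk}\lesssim 1$) and the $L^\infty$-bound~\eqref{h-deriv-linf-bound} on $\frac{\partial H}{\partial p}$; the last integral is bounded by $\norm{\Dk\nabla m}_\Omega\norm{\phi_k}_{H^1(\Omega)}$ by Cauchy--Schwarz. Taking the supremum over $\phi_k$ with $\norm{\phi_k}_{H^1(\Omega)}=1$ then gives $\norm{L(\overline{m}_k-\mkp)}_{V_k^*}\lesssim \norm{m-\overline{m}_k}_{H^1(\Omega)}+\norm{\Dk\nabla m}_\Omega$.

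Combining these estimates and taking the infimum over $\overline{m}_k\in V_k$ yields~\eqref{eq:infSk-equiv}, since $\mkp\in\Vkp$ furnishes an admissible competitor in the left-hand infimum. I do not anticipate a serious obstacle here; the only point requiring care is the bookkeeping of the stabilization discrepancy $\int_\Omega\Dk\nabla m\cdot\nabla\phi_k$, which is precisely the source of the $\norm{\Dk\nabla m}_\Omega$ term and is the reason the bound is only \emph{near} quasi-optimal rather than fully quasi-optimal. Verifying that the relevant operator genuinely lies in $W(V_k,\Dk)$, so that Lemma~\ref{lemma-uniform-L-inv-bound-disc} applies with a $k$-independent constant $\Cstab$, is the key structural ingredient that makes the argument uniform in the mesh index.
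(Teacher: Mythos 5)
Your proposal is correct and follows essentially the same route as the paper: it takes $\mkp$ from~\eqref{eq:mkp_def} as the nonnegative competitor, compares it to an arbitrary $\overline{m}_k\in V_k$ via the uniform stability bound of Lemma~\ref{lemma-uniform-L-inv-bound-disc} with drift $\tilde{b}=\frac{\partial H}{\partial p}[\nabla u]$, and isolates the same leftover term $\int_\Omega \Dk\nabla m\cdot\nabla\phi_k\,\mathrm{d}x$ as the source of $\norm{\Dk\nabla m}_\Omega$. The only cosmetic remark is that $\mkp\in\Vkp$ follows from~\ref{ass:dmp} and the nonnegativity of $G$ alone; the nonnegativity of $m$ itself is not needed for that step.
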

\begin{proof}
Since $\mkp\in\Vkp$, it is clearly sufficient to show that $\norm{m-\mkp}_{H^1(\Omega)}$ is bounded by the right-hand side of~\eqref{eq:infSk-equiv} above.
Let $\overline{m}_k\in V_k$ be arbitrary. Then~\eqref{eq:mkp_def} implies that
\begin{equation}\label{eq:infSk-equiv-1}
\begin{split}
	&\int_{\Omega}A_k\nabla (\mkp-\overline{m}_k)\cdot\nabla \phi_k+(\mkp-\overline{m}_k)\frac{\partial H}{\partial p}[\nabla u]\cdot\nabla \phi_k\mathrm{d}x
	\\
	&\qquad=\int_{\Omega}A_k\nabla (m-\overline{m}_k)\cdot\nabla \phi_k +(m-\overline{m}_k)\frac{\partial H}{\partial p}[\nabla u]\cdot\nabla \phi_k\mathrm{d}x -  \int_{\Omega}D_k\nabla m\cdot\nabla \phi_k\mathrm{d}x
\end{split}
\end{equation}
for all $\phi_k\in V_k$.
Using Lemma~\ref{lemma-uniform-L-inv-bound-disc}, for the case $\tilde{b}=\frac{\partial H}{\partial p}[\nabla u]$, we {apply the equivalent inf-sup stability bound \eqref{inf-sup-op-bound-2-equiv} with $w_k\coloneqq \mkp-\overline{m}_k$ to} find that
\begin{multline}\label{eq:infSk-equiv-2}
\norm{\mkp-\overline{m}_k}_{H^1(\Omega)}  \\
\lesssim \sup_{\substack{\phi_k\in V_k:\\ \|\phi_k\|_{H^1(\Omega)}=1}}\left[\int_{\Omega}A_k\nabla (m-\overline{m}_k)\cdot\nabla \phi_k +(m-\overline{m}_k)\frac{\partial H}{\partial p}[\nabla u]\cdot\nabla \phi_k\mathrm{d}x \right. 
\\ \left. -  \int_{\Omega}D_k\nabla m\cdot\nabla \phi_k\mathrm{d}x \right] \\\lesssim \norm{m-\overline{m}_k}_{H^1(\Omega)}+\norm{\Dk\nabla m}_\Omega,
\end{multline}
where the second inequality is obtained by applying the Cauchy--Schwarz inequality to the various terms, along with \eqref{h-deriv-linf-bound} and the bound on~$\Dk$ from~\ref{ass:dmp}.
The constant in~\eqref{eq:infSk-equiv-2} depends only on $\Cstab$, $d$, $\Omega$, $\nu$, $\CD$ and $L_H$.
Since $\overline{m}_k\in V_k$ is arbitrary, we then deduce from the triangle inequality that $\norm{m-\mkp}_{H^1(\Omega)}\lesssim \inf_{\overline{m}_k\in V_k} \norm{m-\overline{m}_k}_{H^1(\Omega)}+\norm{\Dk \nabla m}_\Omega$, which completes the proof of~\eqref{eq:infSk-equiv}.
\end{proof}

We now prove the $L^2$-norm stability in the density component for the discrete MFG system~\eqref{weakdisc-C1H}, when restricted to the set of nonnegative functions in~$\Vkp$.
This bound can be seen as a quantitative analogue to some of the inequalities used in the well-known proofs of uniqueness of solutions due to Lasry and Lions in~\cite{lasry2007mean}.
\begin{lemma}\label{lemma-key-L2-bound}
Assume~\ref{ass:dmp}. 
For each $k\in\mathbb{N}$, any $\overline{u}_k\in V_k$ and $\overline{m}_k\in \Vkp$, we have
\begin{equation}\label{key-L2-bound}
c_F\|\overline{m}_k-m_k\|_{\Omega}^2\leq \langle {R}_k^1(\overline{m}_k,\overline{u}_k),\overline{m}_k-m_k\rangle_{V_k^*\times V_k} - \langle {R}_k^2(\overline{m}_k,\overline{u}_k),\overline{u}_k-u_k\rangle_{V_k^*\times V_k}.
\end{equation}
\end{lemma}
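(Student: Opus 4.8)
The plan is to carry out the discrete analogue of the Lasry--Lions monotonicity argument: I would test each residual against the ``crossed'' test function so that the symmetric diffusion contributions cancel, leaving only terms that are controlled by the strong monotonicity of $F$ and the convexity of $H$.

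First I would set $\psi_k=\overline{m}_k-m_k$ in the definition~\eqref{FEM-Res-HJB} of $R_k^1$ and use the discrete HJB equation~\eqref{weakformdisc1}, tested with the same $\psi_k$, to eliminate $\langle F[m_k],\overline{m}_k-m_k\rangle$. This yields
\begin{multline*}
\langle R_k^1(\overline{m}_k,\overline{u}_k),\overline{m}_k-m_k\rangle_{V_k^*\times V_k} = \langle F[\overline{m}_k]-F[m_k],\overline{m}_k-m_k\rangle_{H^{-1}\times H_0^1} \\ + \int_\Omega A_k\nabla(u_k-\overline{u}_k)\cdot\nabla(\overline{m}_k-m_k) + (H[\nabla u_k]-H[\nabla\overline{u}_k])(\overline{m}_k-m_k)\,\dd x.
\end{multline*}
Symmetrically, setting $\phi_k=\overline{u}_k-u_k$ in~\eqref{FEM-Res-KFP} and using the discrete KFP equation~\eqref{weakformdisc2} shows that $\langle R_k^2(\overline{m}_k,\overline{u}_k),\overline{u}_k-u_k\rangle$ equals $\int_\Omega A_k\nabla(m_k-\overline{m}_k)\cdot\nabla(\overline{u}_k-u_k)\,\dd x$ plus the drift contribution $\int_\Omega\big(m_k\frac{\partial H}{\partial p}[\nabla u_k]-\overline{m}_k\frac{\partial H}{\partial p}[\nabla\overline{u}_k]\big)\cdot\nabla(\overline{u}_k-u_k)\,\dd x$.

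Next I would form the difference $\langle R_k^1,\overline{m}_k-m_k\rangle - \langle R_k^2,\overline{u}_k-u_k\rangle$. Because $A_k$ is symmetric and $\nabla(m_k-\overline{m}_k)=-\nabla(\overline{m}_k-m_k)$, $\nabla(\overline{u}_k-u_k)=-\nabla(u_k-\overline{u}_k)$, the two diffusion integrals are identical and cancel exactly; this cancellation is the discrete counterpart of the classical Lasry--Lions identity. The term $\langle F[\overline{m}_k]-F[m_k],\overline{m}_k-m_k\rangle$ then produces, through the strong monotonicity~\eqref{strong-mono-F-C1H}, precisely the required lower bound $c_F\|\overline{m}_k-m_k\|_\Omega^2$, so it only remains to show that the surviving Hamiltonian terms are nonnegative.

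The key step is to regroup these surviving Hamiltonian terms according to the factors $m_k$ and $\overline{m}_k$, obtaining
\begin{multline*}
\int_\Omega m_k\left(H[\nabla\overline{u}_k]-H[\nabla u_k]-\frac{\partial H}{\partial p}[\nabla u_k]\cdot\nabla(\overline{u}_k-u_k)\right)\dd x \\ + \int_\Omega \overline{m}_k\left(H[\nabla u_k]-H[\nabla\overline{u}_k]-\frac{\partial H}{\partial p}[\nabla\overline{u}_k]\cdot\nabla(u_k-\overline{u}_k)\right)\dd x.
\end{multline*}
Each integrand in parentheses is nonnegative a.e.\ in $\Omega$ by the convexity of $p\mapsto H(x,p)$: the first is the subgradient inequality based at $\nabla u_k$ and evaluated at $\nabla\overline{u}_k$, and the second is the subgradient inequality based at $\nabla\overline{u}_k$ and evaluated at $\nabla u_k$. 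Since $\overline{m}_k\in\Vkp$ is nonnegative by hypothesis and $m_k\geq 0$ by the discrete maximum principle---which is exactly where assumption~\ref{ass:dmp} enters, via the nonnegativity of the density in the well-posed discrete system~\eqref{weakdisc-C1H}---both integrals are nonnegative, and~\eqref{key-L2-bound} follows. I expect the main obstacle to be identifying the correct regrouping so that convexity can be invoked at the right base points; once this is found, the nonnegativity of the two densities closes the argument immediately.
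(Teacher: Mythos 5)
Your proof is correct and follows essentially the same route as the paper's: test the two residuals against the crossed differences, cancel the symmetric $A_k$-diffusion terms, regroup the surviving Hamiltonian contributions into the two convexity remainders $(m_k,R_H[\nabla\overline{u}_k,\nabla u_k])_\Omega$ and $(\overline{m}_k,R_H[\nabla u_k,\nabla\overline{u}_k])_\Omega$, and conclude via nonnegativity of $m_k$ (from the DMP) and $\overline{m}_k$ together with the strong monotonicity of $F$. No gaps.
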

\begin{proof}[Proof of Lemma \ref{lemma-key-L2-bound}]
Let $\overline{u}_k\in V_k$ and $\overline{m}_k\in \Vkp$ be fixed but arbitrary.
To abbreviate the notation, let
\begin{equation}
\Rk\coloneqq \langle {R}_k^1(\overline{m}_k,\overline{u}_k),\overline{m}_k-m_k\rangle_{V_k^*\times V_k} - \langle {R}_k^2(\overline{m}_k,\overline{u}_k),\overline{u}_k-u_k\rangle_{V_k^*\times V_k}.
\end{equation} 
{Since $R_k^1({m}_k,{u}_k)=0$ and $R_k^2({m}_k,{u}_k)=0$ in $V_k^*$, we have that
\begin{multline}\label{diff-expr}
	\Rk=\langle {R}_k^1(\overline{m}_k,\overline{u}_k) - R_k^1({m}_k,{u}_k),\overline{m}_k-m_k\rangle_{V_k^*\times V_k} \\
	- \langle {R}_k^2(\overline{m}_k,\overline{u}_k) - R_k^2({m}_k,{u}_k),\overline{u}_k-u_k\rangle_{V_k^*\times V_k}.
\end{multline}} 
Recall that $(\cdot,\cdot)_{\Omega}$ denotes the inner product on $L^2(\Omega)$. 
{Using the discrete HJB equation~\eqref{weakformdisc1} and the discrete KFP equation~\eqref{weakformdisc2} we find that}
{
\begin{multline}\label{diff-expr-hjb}
	\langle {R}_k^1(\overline{m}_k,\overline{u}_k) - R_k^1({m}_k,{u}_k),\overline{m}_k-m_k\rangle_{V_k^*\times V_k}
	\\
	=\langle F[\overline{m}_k] - F[m_{k}],\overline{m}_k-m_k\rangle_{H^{-1}\times H_0^1} +\int_{\Omega}(H[\nabla u_k] - H[\nabla \overline{u}_k])(\overline{m}_k-m_k)\mathrm{d}x
	\\
	+\int_{\Omega}A_k\nabla (u_k-\overline{u}_k)\cdot\nabla (\overline{m}_k-m_k)\mathrm{d}x
\end{multline}
and 
\begin{multline}\label{diff-expr-kfp}
	\langle {R}_k^2(\overline{m}_k,\overline{u}_k) - R_k^2({m}_k,{u}_k),\overline{u}_k-u_k\rangle_{V_k^*\times V_k} \\
	= \int_{\Omega}\left(m_k\frac{\partial H}{\partial p}[\nabla u_k]- \overline{m}_k\frac{\partial H}{\partial p}[\nabla \overline{u}_k]\right)\cdot\nabla (\overline{u}_k-u_k)\mathrm{d}x+\int_{\Omega}A_k\nabla (\overline{m}_k-m_k)\cdot\nabla (u_k-\overline{u}_k)\mathrm{d}x
\end{multline}
Using the symmetry of $A_k$ we then obtain from \eqref{diff-expr}, \eqref{diff-expr-hjb} and \eqref{diff-expr-kfp} that}
\begin{multline}\label{eq:L2mono_bound_1}
\Rk=\langle F[\overline{m}_k] - F[m_{k}],\overline{m}_k-m_k\rangle_{H^{-1}\times H_0^1}
\\ + (m_k, R_H[\nabla\overline{u}_k,\nabla u_k])_\Omega + (\overline{m}_k,R_H[\nabla u_k,\nabla \overline{u}_k])_\Omega,
\end{multline}
where we define $R_H[\nabla v,\nabla w]\coloneqq H[\nabla v]-H[\nabla w]-\frac{\partial H}{\partial p}[\nabla w]\cdot \nabla(v-w)$ for any $v,\,w \in H^1(\Omega)$.
Since $H$ is convex and differentiable w.r.t.\ $p$, we have $R_H[\nabla v,\nabla w]\geq 0$ a.e.\ for any $v,\,w\in H^1(\Omega)$. Also, we have $m_k\geq 0$ in $\Omega$ from \ref{ass:dmp} and $\overline{m}_k\geq 0$ in $\Omega$ by hypothesis.
Therefore the terms $(m_k, R_H[\nabla\overline{u}_k,\nabla u_k])_\Omega$ and $(\overline{m}_k,R_H[\nabla u_k,\nabla \overline{u}_k])_\Omega$ in~\eqref{eq:L2mono_bound_1} are both nonnegative, and thus the strong monotonicity of $F$, c.f.~\eqref{strong-mono-F-C1H}, implies that
\begin{equation}\label{eq:L2mono_bound_2}
c_F\norm{\overline{m}_k-m_k}_\Omega^2 \leq \langle F[\overline{m}_k] - F[m_{k}],\overline{m}_k - m_{k}\rangle_{H^{-1}\times H_0^1} \leq \Rk,
\end{equation}
which shows~\eqref{key-L2-bound}.
\end{proof}

\section{Proof of Theorem~\ref{theorem-disc-error-smooth-H-lip}}\label{sec:main_thm_1}
We now prove Theorem~\ref{theorem-disc-error-smooth-H-lip}. 
The main ingredients of the proof are the separate stability bounds for the discrete HJB and KFP equations given in Section~\ref{sec:residuals} above, along with the near quasi-optimality of nonnegative approximations and the $L^2$-stability bound for the discrete MFG system as shown in Section~\ref{sec:nonneg_approx} above.
\medskip 

\paragraph{\emph{Proof of Theorem~\ref{theorem-disc-error-smooth-H-lip}}}
Let $R>0$ be the constant given in~Lemma~\ref{lemma-wk-H1-bound}.
As explained in~Remark~\ref{rem:aprioriconvergence}, we have $u_k\to u$ in $H_0^1(\Omega)$ as $k\to \infty$. Therefore, there exists $k_0\in \N$ such that for all $k\geq k_0$, we have 
$u_k\in \Bku$ where $\Bku \coloneqq \{ v_k\in V_k; \norm{v_k-u}_{H^1(\Omega)}< R/2\}$ is the restriction to $V_k$ of the $H^1$-norm ball of radius $R/2$ centred on~$u$.

Consider now $k\geq k_0$, an arbitrary $\overline{m}_k \in \Vkp$ and an arbitrary $\overline{u}_k \in \Bku$, noting that $\Bku$ is nonempty for all $k\geq k_0$.
The triangle inequality then implies that $\norm{\overline{u}_k-u_k}_{H^1(\Omega)}\leq R$ so that $\overline{u}_k$ satisfies the hypotheses of Lemma~\ref{lemma-wk-H1-bound}.

We start by combining Lemma~\ref{lemma-zk-H1-bound} and~\eqref{eq:dual_norm_res_R2} to obtain
\begin{equation}
\norm{m_k-\overline{m}_k}_{H^1(\Omega)}\lesssim \norm{m-\overline{m}_k}_{H^1(\Omega)}+\norm{u-u_k}_{H^1(\Omega)}+\norm{\Dk \nabla m}_\Omega.
\end{equation}
Therefore, the triangle inequality implies that
\begin{multline}\label{eq:main_thm_proof_1}
\norm{m_k-\overline{m}_k}_{H^1(\Omega)}+\norm{u_k-\overline{u}_k}_{H^1(\Omega)}\\ \lesssim \norm{m-\overline{m}_k}_{H^1(\Omega)}+\norm{u-\overline{u}_k}_{H^1(\Omega)}+\norm{\Dk \nabla m}_\Omega + \norm{u_k-\overline{u}_k}_{H^1(\Omega)}.
\end{multline}
Since $\norm{u_k-\overline{u}_k}_{H^1(\Omega)}\leq R$, we then apply Lemma~\ref{lemma-wk-H1-bound} to the last term on the right-hand side of~\eqref{eq:main_thm_proof_1} above to find that
\begin{multline}\label{eq:main_thm_proof_2}
\norm{m_k-\overline{m}_k}_{H^1(\Omega)}+\norm{u_k-\overline{u}_k}_{H^1(\Omega)}
\\\lesssim
\norm{m-\overline{m}_k}_{H^1(\Omega)}+\norm{u-\overline{u}_k}_{H^1(\Omega)}+\norm{\Dk \nabla m}_\Omega + \norm{R_k^1(\overline{m}_k,\overline{u}_k)}_{V_k^*}+\norm{\overline{m}_k-m_k}_\Omega
\\ \lesssim \norm{m-\overline{m}_k}_{H^1(\Omega)}+\norm{u-\overline{u}_k}_{H^1(\Omega)}+\norm{\Dk \nabla m}_\Omega + \norm{\Dk \nabla u}_\Omega + \norm{\overline{m}_k-m_k}_\Omega,
\end{multline}
where in passing to the last line above, we have applied~\eqref{eq:dual_norm_res_R1} from Lemma~\ref{lemma-Ek-general-est} to bound $\norm{R_k^1(\overline{m}_k,\overline{u}_k)}_{V_k^*}$.
Then, since by hypothesis $\overline{m}_k \in \Vkp$, we use the bound for $\norm{\overline{m}_k-m_k}_\Omega$ in~\eqref{key-L2-bound} (after taking square-roots) from Lemma~\ref{lemma-key-L2-bound} which implies that
\begin{multline}\label{eq:main_thm_proof_3}
\norm{m_k-\overline{m}_k}_{H^1(\Omega)}+\norm{u_k-\overline{u}_k}_{H^1(\Omega)}
\\ \lesssim \norm{m-\overline{m}_k}_{H^1(\Omega)}+\norm{u-\overline{u}_k}_{H^1(\Omega)}+\norm{\Dk \nabla m}_\Omega + \norm{\Dk \nabla u}_\Omega
\\ + \norm{R_k^1(\overline{m}_k,\overline{u}_k)}_{V_k^*}^{\frac{1}{2}}\norm{m_k-\overline{m}_k}_{H^1(\Omega)}^{\frac{1}{2}} + \norm{R_k^2(\overline{m}_k,\overline{u}_k)}_{V_k^*}^{\frac{1}{2}}\norm{u_k-\overline{u}_k}_{H^1(\Omega)}^{\frac{1}{2}}.
\end{multline}
Applying Young's inequality to the last two terms on the right-hand side of~\eqref{eq:main_thm_proof_3} and simplifying, we deduce that
\begin{multline}\label{eq:main_thm_proof_4}
\norm{m_k-\overline{m}_k}_{H^1(\Omega)}+\norm{u_k-\overline{u}_k}_{H^1(\Omega)}
\\\lesssim \norm{m-\overline{m}_k}_{H^1(\Omega)}+\norm{u-\overline{u}_k}_{H^1(\Omega)}+\norm{\Dk \nabla m}_\Omega + \norm{\Dk \nabla u}_\Omega\\+ \norm{R_k^1(\overline{m}_k,\overline{u}_k)}_{V_k^*}+\norm{R_k^2(\overline{m}_k,\overline{u}_k)}_{V_k^*}
\\\lesssim  \norm{m-\overline{m}_k}_{H^1(\Omega)}+\norm{u-\overline{u}_k}_{H^1(\Omega)}+\norm{\Dk \nabla m}_\Omega + \norm{\Dk \nabla u}_\Omega,
\end{multline}
where the last inequality is obtained by applying Lemma~\ref{lemma-Ek-general-est} to bound $\norm{R_k^1(\overline{m}_k,\overline{u}_k)}_{V_k^*}$ and $\norm{R_k^2(\overline{m}_k,\overline{u}_k)}_{V_k^*}$.
It follows from~\eqref{eq:main_thm_proof_4} and the triangle inequality that
\begin{multline}\label{eq:main_thm_proof_5}
\norm{m-m_k}_{H^1(\Omega)}+\norm{u-u_k}_{H^1(\Omega)}\\\lesssim \norm{m-\overline{m}_k}_{H^1(\Omega)}+\norm{u-\overline{u}_k}_{H^1(\Omega)}+\norm{\Dk \nabla m}_\Omega + \norm{\Dk \nabla u}_\Omega.
\end{multline}
Since $\overline{m}_k$ was arbitrary in $\Vkp$ and $\overline{u}_k$ was arbitrary in $\Bku$, we consider the infima in~\eqref{eq:main_thm_proof_5} over all such $\overline{m}_k$ and $\overline{u}_k$ to obtain
\begin{multline}\label{eq:main_thm_proof_6}
\norm{m-m_k}_{H^1(\Omega)}+\norm{u-u_k}_{H^1(\Omega)}\\
\lesssim \inf_{\overline{m}_k\in\Vkp}\norm{m-\overline{m}_k}_{H^1(\Omega)}+\inf_{\overline{u}_k\in B_k(u,R/2)}\norm{u-\overline{u}_k}_{H^1(\Omega)} + \norm{\Dk \nabla m}_\Omega + \norm{\Dk \nabla u}_\Omega.
\end{multline}
Since $B_k(u,R/2)$ is nonempty, it is clear that 
\begin{equation}\label{eq:infimum_identity}
\inf_{\overline{u}_k\in B_k(u,R/2)}\norm{u-\overline{u}_k}_{H^1(\Omega)}=\inf_{\overline{u}_k\in V_k}\norm{u-\overline{u}_k}_{H^1(\Omega)}.
\end{equation}
Thus, using this identity and using Lemma~\ref{lemma-infSk-equiv-inf} in~\eqref{eq:main_thm_proof_6} yields~\eqref{eq:main_a_priori}, thereby completing the proof.
\hfill\proofbox

\section{Proof of Theorem \ref{theorem-disc-error-smooth-H-lip-app}}\label{sec:main_thm_2}
As explained in Section~\ref{sec:l2bounds}, we seek bounds for a composite norm on the error that involves the $L^2$-norm of the error for the density and $H^1$-norm of the error for the value function. 
The starting point for the analysis is in Lemma~\ref{lemma-E-k-specific-bound}, which shows that, when considering the specific approximation $\mkp\in \Vkp$ defined in~\eqref{eq:mkp_def} above, we obtain a sharper bound on the residual in the KFP equation which involves the approximation error for the density function only in the $L^2$-norm rather than the $H^1$-norm.
\begin{lemma}\label{lemma-E-k-specific-bound}
Assume~\ref{ass:bounded} and~\ref{ass:dmp}. 
Then, for any $\overline{u}_k\in V_k$,
\begin{equation}\label{E-k-bound}
{\|R_k^2(\mkp,\overline{u}_k)\|_{V_k^*}
	\lesssim \norm{m-\mkp}_{\Omega}+\norm{u-\overline{u}_k}_{H^1(\Omega)},}
\end{equation}
where the hidden constant depends only on $L_H$, $L_{H_p}$ and $\Mfty$.
\end{lemma}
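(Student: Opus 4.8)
The plan is to exploit the specific choice of approximation $\mkp$ defined in~\eqref{eq:mkp_def}, which crucially solves the discrete KFP equation with the \emph{exact} value function $u$ appearing in the drift term, rather than the discrete value function $u_k$. First I would insert the definition of $\mkp$ into the residual $R_k^2(\mkp,\overline{u}_k)$ given by~\eqref{FEM-Res-KFP}. Since the datum $\langle G,\phi_k\rangle_{H^{-1}\times H_0^1}$ equals the bilinear form evaluated at $\mkp$ with drift $\frac{\partial H}{\partial p}[\nabla u]$, while the residual subtracts the same form with drift $\frac{\partial H}{\partial p}[\nabla \overline{u}_k]$, the entire diffusion contribution $\int_\Omega A_k\nabla \mkp\cdot\nabla \phi_k\,\dd x$ cancels exactly, leaving only the reaction-type term
\begin{equation}
\langle R_k^2(\mkp,\overline{u}_k),\phi_k\rangle_{V_k^*\times V_k}=\int_\Omega \mkp\left(\frac{\partial H}{\partial p}[\nabla u]-\frac{\partial H}{\partial p}[\nabla \overline{u}_k]\right)\cdot\nabla \phi_k\,\dd x\quad\forall \phi_k\in V_k.
\end{equation}
This cancellation is the heart of the matter: it is precisely what removes any $H^1$-norm dependence on the density and produces a bound in which $m-\mkp$ enters only through its $L^2$-norm.

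To estimate the remaining term by the claimed right-hand side, I would split $\mkp=(\mkp-m)+m$ and treat the two contributions differently. For the part involving $\mkp-m$, I would use the uniform bound on $\frac{\partial H}{\partial p}$ from~\eqref{h-deriv-linf-bound}, so that $\abs{\frac{\partial H}{\partial p}[\nabla u]-\frac{\partial H}{\partial p}[\nabla \overline{u}_k]}\leq 2L_H$ a.e.\ in $\Omega$; a Cauchy--Schwarz inequality then yields a bound of the form $2L_H\norm{m-\mkp}_\Omega\norm{\phi_k}_{H^1(\Omega)}$, which involves the density error only in the $L^2$-norm. For the part involving $m$, I would instead use the Lipschitz continuity of $\frac{\partial H}{\partial p}$ from~\eqref{eq:Hp_bound}, giving $\abs{\frac{\partial H}{\partial p}[\nabla u]-\frac{\partial H}{\partial p}[\nabla \overline{u}_k]}\leq L_{H_p}\abs{\nabla(u-\overline{u}_k)}$, together with the essential boundedness $\norm{m}_{L^\infty(\Omega)}\leq \Mfty$ from~\eqref{m-ess-bound}; a second Cauchy--Schwarz inequality produces the bound $\Mfty L_{H_p}\norm{u-\overline{u}_k}_{H^1(\Omega)}\norm{\phi_k}_{H^1(\Omega)}$.

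Combining the two estimates and taking the supremum over $\phi_k\in V_k$ with $\norm{\phi_k}_{H^1(\Omega)}=1$ would give the dual-norm bound~\eqref{E-k-bound} with a constant depending only on $L_H$, $L_{H_p}$ and $\Mfty$, as asserted. I do not anticipate any serious obstacle in the estimation itself; the only real subtlety is the deliberate asymmetry in the splitting. One must apply the bounded-derivative estimate to the factor $\mkp-m$ so that only $\norm{m-\mkp}_\Omega$ appears, while reserving the $L^\infty$-bound for the \emph{continuous} density $m$ (rather than for the discrete $\mkp$, for which no analogous uniform $L^\infty$-bound has been established) to control the factor multiplying the Lipschitz difference of the Hamiltonian derivatives.
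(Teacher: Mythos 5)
Your proposal is correct and follows essentially the same route as the paper's proof: the cancellation coming from the definition of $\mkp$ in~\eqref{eq:mkp_def}, the splitting $\mkp=(\mkp-m)+m$, and the asymmetric use of the uniform bound~\eqref{h-deriv-linf-bound} on the first piece versus the Lipschitz bound~\eqref{eq:Hp_bound} together with~\eqref{m-ess-bound} on the second. The resulting constants $2L_H$ and $\Mfty L_{H_p}$ even match those in the paper.
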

Note that the right-hand side of~\eqref{E-k-bound} features only the $L^2$-norm term $\norm{m-\mkp}_\Omega$, in contrast to the right-hand side of~\eqref{eq:dual_norm_res_R2} which includes an $H^1$-norm approximation error term for the density.
\begin{proof}
Fix $k\in\mathbb{N}$ and let $\overline{u}_k\in V_k$ be given. 
For any $\phi_k \in V_k$, it follows from the definition of $\mkp$ given in~\eqref{eq:mkp_def} that
\begin{equation*}
\begin{split}
	&\langle R_k^2(\mkp,\overline{u}_k),\phi_k\rangle_{V_k^*\times V_k}
	\\
	&=\langle G,\phi_k\rangle_{H^{-1}\times H_0^1} - \int_{\Omega}A_k\nabla \mkp\cdot\nabla \phi_k+\mkp\frac{\partial H}{\partial p}[\nabla \overline{u}_k]\cdot\nabla \phi_k\mathrm{d}x
	\\
	&=\int_\Omega \mkp \left(\frac{\partial H}{\partial p}[\nabla u] - \frac{\partial H}{\partial p}[\nabla \overline{u}_k]  \right){\cdot \nabla \phi_k}\mathrm{d}x
	\\
	&=\int_{\Omega}(\mkp-m)\left(\frac{\partial H}{\partial p}[\nabla u] - \frac{\partial H}{\partial p}[\nabla \overline{u}_k]\right)\cdot\nabla \phi_k+ m\left(\frac{\partial H}{\partial p}[\nabla u] - \frac{\partial H}{\partial p}[\nabla \overline{u}_k]\right)\cdot\nabla \phi_k\mathrm{d}x.
\end{split}
\end{equation*}
{Consequently, we have for all $\phi_k\in V_k$ that
\begin{multline}
	|\langle R_k^2(\mkp,\overline{u}_k),\phi_k\rangle_{V_k^*\times V_k}|
	\\
	\leq \left(2L_H\|\mkp-m\|_{\Omega}+ {M_{\infty}} \left\|\frac{\partial H}{\partial p}[\nabla u] - \frac{\partial H}{\partial p}[\nabla \overline{u}_k]\right\|_{\Omega}\right)\|\nabla \phi_k\|_{\Omega}
\end{multline}
where we have applied \eqref{h-deriv-linf-bound} and the fact that $m\in L^{\infty}(\Omega)$ by \eqref{m-ess-bound}. Then the Lipschitz condition \eqref{eq:Hp_bound} implies that} 
\begin{equation}
\|R_k^2(\mkp,\overline{u}_k)\|_{V_k^*}\leq 2L_H\|m-\mkp\|_{\Omega}+ {\Mfty} L_{H_p}\|u-\overline{u}_k\|_{H^1(\Omega)},
\end{equation}
which shows~\eqref{E-k-bound}.
\end{proof}

Assuming that the domain $\Omega$ is convex, we use a standard duality argument to give a bound on the $L^2$ norm of the approximation error $\norm{m-\mkp}_{\Omega}$.
\begin{lemma}\label{lemma-approxs-error-ests}
Assume~\ref{ass:bounded} and~\ref{ass:dmp}.
Suppose also that $\Omega$ is convex. 
Then 
\begin{equation}\label{eq:duality_bound}
	\|m-\mkp\|_{\Omega}\lesssim h_k\|m\|_{H^1(\Omega)},
\end{equation}
where the hidden constant depends only on $d$, $\Omega$, $\nu$, $L_H$, $\delta$, {$\Cstab$,} and {$\CD$}.
\end{lemma}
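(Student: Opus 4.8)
The plan is to run an Aubin--Nitsche duality argument, with the convexity of $\Omega$ supplying the elliptic regularity that drives the estimate. Set $e\coloneqq m-\mkp$ and abbreviate $b\coloneqq\frac{\partial H}{\partial p}[\nabla u]$, which by~\eqref{h-deriv-linf-bound} satisfies $\norm{b}_{L^\infty(\Omega;\R^\dim)}\leq L_H$. I would first introduce the dual problem: find $z\in H_0^1(\Omega)$ with
\[
\int_\Omega \nu\nabla\phi\cdot\nabla z + \phi\, b\cdot\nabla z\,\dd x=(e,\phi)_\Omega\qquad\forall\phi\in H_0^1(\Omega),
\]
whose strong form is $-\nu\Delta z+b\cdot\nabla z=e$ with homogeneous Dirichlet data. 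Its well-posedness follows from the standard theory for linear convection--diffusion equations with bounded drift (cf.\ the well-posedness of the continuous problem in Remark~\ref{rem-continuous-pb-basic-properties-C1H}) and yields $\norm{z}_{H^1(\Omega)}\lesssim\norm{e}_\Omega$. Since $\Omega$ is convex and $e\in L^2(\Omega)$, rewriting the equation as $-\nu\Delta z=e-b\cdot\nabla z\in L^2(\Omega)$ and invoking the convex-domain $H^2$-regularity estimate for the Laplacian gives $\norm{z}_{H^2(\Omega)}\lesssim\norm{e}_\Omega+L_H\norm{z}_{H^1(\Omega)}\lesssim\norm{e}_\Omega$.

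Next I would test the dual problem with $\phi=e$ to get $\norm{e}_\Omega^2=\int_\Omega\nu\nabla e\cdot\nabla z+e\,b\cdot\nabla z\,\dd x$, and split $z=z_k+(z-z_k)$ for a quasi-interpolant $z_k\in V_k$ obeying $\norm{z-z_k}_{H^1(\Omega)}\lesssim h_k\norm{z}_{H^2(\Omega)}$ and $\norm{z_k}_{H^1(\Omega)}\lesssim\norm{z}_{H^1(\Omega)}$. For the contribution of $z-z_k$, continuity of the bilinear form on $H_0^1(\Omega)\times H_0^1(\Omega)$ together with the interpolation bound and the a priori estimate $\norm{\mkp}_{H^1(\Omega)}\lesssim\norm{m}_{H^1(\Omega)}$ (established just before Lemma~\ref{lemma-infSk-equiv-inf}, whence $\norm{e}_{H^1(\Omega)}\lesssim\norm{m}_{H^1(\Omega)}$) yields a bound of order $h_k\norm{m}_{H^1(\Omega)}\norm{e}_\Omega$. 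The decisive step is the contribution of $z_k\in V_k$: testing the continuous equation~\eqref{weakform2-C1H} with $z_k$ gives $\int_\Omega\nu\nabla m\cdot\nabla z_k+m\,b\cdot\nabla z_k\,\dd x=\langle G,z_k\rangle_{H^{-1}\times H_0^1}$, while testing~\eqref{eq:mkp_def} with $z_k$ gives $\int_\Omega A_k\nabla\mkp\cdot\nabla z_k+\mkp\,b\cdot\nabla z_k\,\dd x=\langle G,z_k\rangle_{H^{-1}\times H_0^1}$. Subtracting these identities, the right-hand sides and the convection terms cancel, and since $A_k-\nu\mathbb{I}_d=\Dk$ by~\eqref{eq:Ak_def}, the remaining terms give the consistency relation $\int_\Omega\nu\nabla e\cdot\nabla z_k+e\,b\cdot\nabla z_k\,\dd x=\int_\Omega\Dk\nabla\mkp\cdot\nabla z_k\,\dd x$.

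Finally I would bound this consistency term by Cauchy--Schwarz, using $\abs{\Dk}\leq\CD h_{\Tk}$ from~\ref{ass:bounded} to obtain $\norm{\Dk\nabla\mkp}_\Omega\leq\CD h_k\norm{\nabla\mkp}_\Omega\lesssim h_k\norm{m}_{H^1(\Omega)}$, combined with $\norm{\nabla z_k}_\Omega\lesssim\norm{z}_{H^1(\Omega)}\lesssim\norm{e}_\Omega$; this is again of order $h_k\norm{m}_{H^1(\Omega)}\norm{e}_\Omega$. Collecting the two contributions gives $\norm{e}_\Omega^2\lesssim h_k\norm{m}_{H^1(\Omega)}\norm{e}_\Omega$, and dividing by $\norm{e}_\Omega$ (the case $e=0$ being trivial) yields~\eqref{eq:duality_bound}. \emph{The main obstacle} is the regularity input: one must guarantee both the well-posedness and, crucially, the $H^2(\Omega)$ elliptic regularity of the adjoint convection--diffusion problem with merely $L^\infty$ drift $b$. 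This is precisely where convexity of $\Omega$ enters, and it has no counterpart on general Lipschitz polytopes, which is why Theorem~\ref{theorem-disc-error-smooth-H-lip-app} restricts to convex domains.
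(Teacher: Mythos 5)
Your proposal is correct and follows essentially the same route as the paper: the identical adjoint convection--diffusion dual problem with convex-domain $H^2$ regularity, the same quasi-interpolant splitting, the same consistency relation between~\eqref{weakform2-C1H} and~\eqref{eq:mkp_def} leaving only the $\Dk\nabla\mkp$ term, and the same final bounds via~\ref{ass:bounded} and the stability estimate $\norm{\mkp}_{H^1(\Omega)}\lesssim\norm{m}_{H^1(\Omega)}$. The only differences are notational (you call the dual solution $z$ and the interpolant $z_k$, whereas the paper uses $z_k$ and $\overline{z}_k$).
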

\begin{proof}
To begin, for each $k\in\mathbb{N}$, let $z_k$ be the unique function in $H_0^1(\Omega)$ such that 
\begin{equation}\label{eq:duality_def}
	\int_{\Omega}\nu\nabla z_k\cdot\nabla \phi+\frac{\partial H}{\partial p}[\nabla u]\cdot\nabla z_k\phi\mathrm{d}x = \int_{\Omega}(m-\mkp)\phi\mathrm{d}x\quad\forall \phi\in H_0^1(\Omega).
\end{equation}
The existence and uniqueness of such $z_k$ for each $k\in\mathbb{N}$ is guaranteed by {\cite[Theorem~8.3]{gilbarg2015elliptic}}
and moreover $\norm{z_k}_{H^1(\Omega)}\lesssim \norm{m-\mkp}_{\Omega}$ with a constant depending only on $d$, $\nu$, $L_H$, and on $\Omega$, see also~\cite[Lemma~4.5]{osborne2022analysis,osborne2024erratum}.
Since $\Omega$ is convex, and since $\frac{\partial H}{\partial p}[\nabla u]\in L^\infty(\Omega;\R^\dim)$, {the $H^2$-regularity of solutions of Poisson's equation on bounded convex domains, see \cite[Theorem~3.2.1.2]{Grisvard2011}, implies that} $z_k\in H^2(\Omega)$ with $\norm{z_k}_{H^2(\Omega)}\lesssim \norm{m-\mkp}_{\Omega}$.
Now let $\overline{z}_k\in V_k$ be any quasi-interpolant of $z_k$ {that satisfies the following} approximation and stability bounds
\begin{equation}\label{eq:duality_2}
	\norm{z_k-\overline{z}_k}_{H^1(\Omega)}\lesssim h_k\abs{z_k}_{H^2(\Omega)}, \quad \norm{\overline{z}_k}_{H^1(\Omega)}\lesssim \norm{z_k}_{H^1(\Omega)}.
\end{equation}
{For instance, one can use $\overline{z}_k = \mathcal{I}_k z_k$ where $\mathcal{I}_k$ denotes the Scott--Zhang quasi-interpolation operator~\cite{scott1990finite,BrennerScott08}, see e.g.~\eqref{Q_k-norm-bound-1} and~\eqref{Q_k-norm-bound-2} above.}

Then, using $\phi=m-\mkp$ in \eqref{eq:duality_def}, and using the definition of~$\mkp$ from~\eqref{eq:mkp_def}, we eventually find that
\begin{multline}\label{eq:duality_1}
	\norm{m-\mkp}_\Omega^2 = \int_\Omega \nu \nabla z_k\cdot\nabla(m-\mkp)+(m-\mkp)\frac{\partial H}{\partial p}[\nabla u]\cdot\nabla z_k \mathrm{d}x
	\\ =  \int_\Omega \nu \nabla (z_k-\overline{z}_k)\cdot\nabla(m-\mkp)+(m-\mkp)\frac{\partial H}{\partial p}[\nabla u]\cdot\nabla (z_k-\overline{z}_k) \mathrm{d}x
	\\+\int_\Omega \Dk\nabla\mkp\cdot\nabla \overline{z}_k\mathrm{d}x.
\end{multline}
Combining~\eqref{eq:duality_1} with~\eqref{eq:duality_2}, we obtain
\begin{multline}\label{eq:duality_3}
	\norm{m-\mkp}_\Omega^2 \lesssim \left( h_k\norm{m-\mkp}_{H^1(\Omega)}+
	\norm{\Dk\nabla \mkp}_\Omega\right)\norm{z_k}_{H^2(\Omega)}
	\\ \lesssim h_k\norm{m}_{H^1(\Omega)} \norm{m-\mkp}_{\Omega},
\end{multline}
where in the last line we used the bound on $\Dk$ from~\ref{ass:bounded}, the stability bound $\norm{\mkp}_{H^1(\Omega)}\lesssim \norm{m}_{H^1(\Omega)}$, and the elliptic regularity of $z_k$ above. We then simplify~\eqref{eq:duality_3} to obtain~\eqref{eq:duality_bound}.
\end{proof}

We now prove Theorem~\ref{theorem-disc-error-smooth-H-lip-app}, with the main idea being to compare the discrete density approximation $m_k$ to the specific approximation $\mkp$ defined in~\eqref{eq:mkp_def} above. Combining the $L^2$-norm bounds above with the results in the previous sections, we are then able to obtain a bound on the relevant composite norm for the error.
\medskip

\paragraph{\emph{Proof of Theorem~\ref{theorem-disc-error-smooth-H-lip-app}}}
Similar to the proof of Theorem~4.1, we consider $k_0$ sufficiently large so that $u_k\in \Bku$ for all $k\geq k_0$, where $R$ is the constant given in Lemma~\ref{lemma-wk-H1-bound}. For each $k\geq k_0$, let $\overline{u}_k \in \Bku$ be arbitrary, noting that $\Bku$ is nonempty.
In the following, let $E_k\geq 0$, $k\in \N$, be the quantity defined by
\begin{equation}\label{eq:l2_bound_1}
E_k \coloneqq \norm{m-\mkp}_\Omega + \norm{u-\overline{u}_k}_{H^1(\Omega)}+\norm{\Dk \nabla u}_\Omega+\norm{\Dk \nabla m}_\Omega.
\end{equation}
We stress that it is the $L^2$-norm of $m-\mkp$ appearing in the definition of $E_k$ above.
We start by noting that Lemmas~\ref{lemma-Ek-general-est} and~\ref{lemma-E-k-specific-bound} imply that
\begin{equation}\label{eq:l2bound_2}
\norm{R_k^1(\mkp,\overline{u}_k)}_{V_k^*} + \norm{R_k^2(\mkp,\overline{u}_k)}_{V_k^*}\lesssim E_k.
\end{equation}
Next, Lemmas~\ref{lemma-zk-H1-bound} and~\ref{lemma-E-k-specific-bound} combined with~\eqref{eq:l2bound_2} imply that
\begin{multline}\label{eq:l2bound_3}
\norm{m_k-\mkp}_{H^1(\Omega)} \lesssim \norm{R_k^2(\mkp,u_k)}_{V_k^*} 
\\ \lesssim \norm{m-\mkp}_{\Omega}+\norm{u-u_k}_{H^1(\Omega)} 
\leq E_k + \norm{u_k-\overline{u}_k}_{H^1(\Omega)},
\end{multline}
where we have applied the triangle inequality in passing to the last inequality above.
Applying Lemma~\ref{lemma-wk-H1-bound} to the last term on the right-hand side of~\eqref{eq:l2bound_3} and combining with~\eqref{eq:l2bound_2}, we thus obtain
\begin{equation}\label{eq:l2bound_4}
\norm{m_k-\mkp}_{H^1(\Omega)} + \norm{u_k-\overline{u}_k}_{H^1(\Omega)} \lesssim E_k + \norm{m_k-\mkp}_{\Omega}.
\end{equation}
Then, Lemma~\ref{lemma-key-L2-bound} and~\eqref{eq:l2bound_2} imply that
\begin{multline}\label{eq:l2bound_5}
\norm{m_k-\mkp}_{\Omega}^2\\  \lesssim \norm{R_k^1(\mkp,\overline{u}_k)}_{V_k^*}\norm{m_k-\mkp}_{H^1(\Omega)}+\norm{R_k^2(\mkp,\overline{u}_k)}_{V_k^*}\norm{u_k-\overline{u}_k}_{H^1(\Omega)} 
\\ \leq E_k^2 + E_k \norm{m_k-\mkp}_\Omega.
\end{multline}
Applying Young's inequality with a parameter, we deduce that
\begin{equation}\label{eq:l2bound_6}
\norm{m_k-\mkp}_\Omega \lesssim E_k.
\end{equation}
Therefore, using~\eqref{eq:l2bound_4}, \eqref{eq:l2bound_6} and the triangle inequality, we get
\begin{equation}
\norm{m-m_k}_\Omega+\norm{u-u_k}_{H^1(\Omega)} \leq E_k + \norm{m_k-\mkp}_{\Omega}+\norm{u_k-\overline{u}_k}_{H^1(\Omega)} \lesssim E_k.
\end{equation}
Finally, we apply Lemma~\ref{lemma-approxs-error-ests} and \ref{ass:bounded} to conclude that
\begin{equation}\label{eq:l2bound_7}
\norm{m-m_k}_\Omega+\norm{u-u_k}_{H^1(\Omega)} \lesssim E_k \lesssim \norm{u-\overline{u}_k}_{H^1(\Omega)}+h_k(\norm{m}_{H^1(\Omega)}+\norm{u}_{H^1(\Omega)}).
\end{equation}
Since $\overline{u}_k$ was arbitrary in $\Bku$, we may then consider the infimum in~\eqref{eq:l2bound_7} and, recalling here~\eqref{eq:infimum_identity}, we conclude~\eqref{uk-error-est-convex}.
\hfill\proofbox

{
\section{Numerical Experiments}\label{sec:numer-exp}
In this section, we present numerical results that verify the optimal asymptotic rates of convergence that are implied by Theorems \ref{theorem-disc-error-smooth-H-lip} and \ref{theorem-disc-error-smooth-H-lip-app}, respectively, for some concrete examples of the system \eqref{sys} with explicit weak solutions $(u,m)$.}
{Recall that the rate of convergence of the approximations for a given norm is called optimal if it equals the rate of convergence of a sequence of best approximations from the same sequence of approximation spaces and in the same norm.}
{In particular, for the first experiment where $u$ is not $H^2$-regular and $m$} {is smooth,} {we showcase Corollary~\ref{corollary-optimal-convergence-rate} of Theorem~\ref{theorem-disc-error-smooth-H-lip}} {on the rate of convergence of the total $H^1$-norm error $\norm{m-m_k}_{H^1(\Omega)}+\norm{u-u_k}_{H^1(\Omega)}$.}
{In the second experiment, we illustrate the conclusion of Theorem~\ref{theorem-disc-error-smooth-H-lip-app}} {on the convergence rate of the composite $L^2$-$H^1$-norm error $\|m-m_k\|_{\Omega}+\|u-u_k\|_{H^1(\Omega)}$.}
{Both our experiments involve examples where at least one of the solution components $u$ or $m$ has limited fractional Sobolev regularity.}

{%
\subsection{Set-up}
For each experiment, we consider the system \eqref{sys} where  $\Omega=(0,1)^2\subset\mathbb{R}^2$ is the unit square, the diffusion $\nu=1$, and the Hamiltonian $H: \overline{\Omega}\times\mathbb{R}^2\to\mathbb{R}$ is defined by
\begin{equation}
	H(x,p)=\sup_{\alpha\in \overline{B_1(0)}}(\alpha\cdot p+\sqrt{1-|\alpha|^2})-1= \sqrt{|p|^2+1}-1\quad\forall (x,p)\in \overline{\Omega}\times\mathbb{R}^2.
\end{equation}
It is clear that $L_H=1$. In each experiment we specify the choices of the operator $F$ and the source term $G$. For each experiment we apply the FEM \eqref{weakdisc-C1H} along a dyadic, nested sequence $\{\mathcal{T}_k\}_{k=1}^9$ of shape-regular uniform meshes on $\Omega$ consisting of right-angled simplicial elements.
We note that the sequence $\{\mathcal{T}_k\}_{k=1}^9$ satisfies the XZ condition \eqref{XZ-condition}. 
The stabilization is given by the diffusion tensor \eqref{edge-tensor-formula} where the weights are taken to be $\omega_{k,E}=L_H\text{diam}(E)$ for all $E\in\mathcal{E}_k$, $k\in\{1,\cdots,9\}$. Therefore, hypotheses \ref{ass:bounded} and~\ref{ass:dmp} are satisfied according to Lemma \ref{lemma-DMP-vol-stabilization}, which ensures that the FEM \eqref{weakdisc-C1H} is monotone in these experiments. The computations for each experiment were performed in Firedrake~\cite{rathgeber2016firedrake}.

\subsection{Experiment 1 (Test with nonsmooth value function)}
	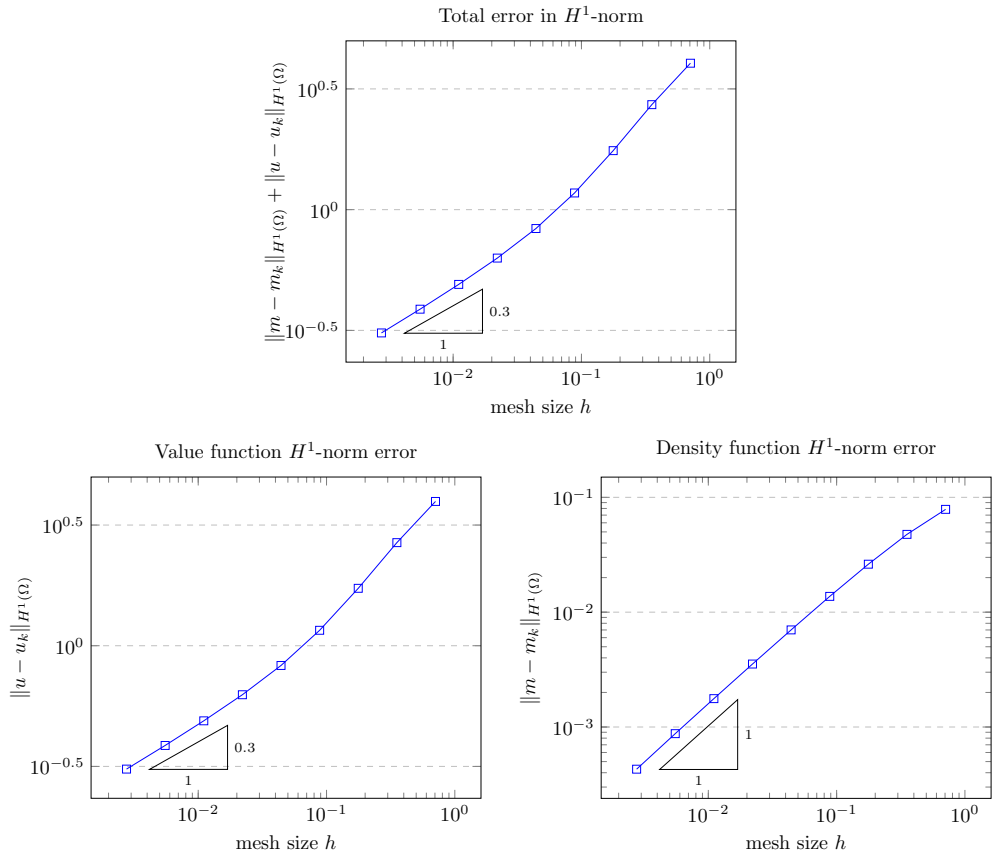
\begin{figure}[tbhp]
		\centering
		\begin{tabular}{c c} 
			\begin{subfigure}[b]{0.45\textwidth}
				\begin{adjustbox}{width=\linewidth}
					\begin{tikzpicture}
						\begin{loglogaxis}[
							title={Total error in $H^1$-norm},
							xlabel={mesh size $h$},
							ylabel={$\norm{m-m_k}_{H^1(\Omega)}+\norm{u-u_k}_{H^1(\Omega)}$},
							xmax=1.6,
							ymax=5,
							legend pos=north west,
							ymajorgrids=true,
							grid style=dashed,
							]
							
							\addplot[
							color=blue,
							mark=square,]
							coordinates {(0.7071068,4.040074)
								(0.3535534,2.721392)
								(0.1767767,1.755469)
								(0.08838835,1.172411)
								(0.04419417,0.835286)
								(0.02209709,0.6301351)
								(0.01104854,0.4902839)
								(0.005524272,0.387112)
								(0.002762136,0.3086764)
							};

							\logLogSlopeTriangleI{0.35}{0.2}{0.09}{0.3}{black};				
						\end{loglogaxis}  
					\end{tikzpicture}
				\end{adjustbox}
			\end{subfigure}
		\end{tabular} 
			\\
		\begin{tabular}{c c}
			\begin{subfigure}[b]{0.45\textwidth}
				\begin{adjustbox}{width=\linewidth} 
					\begin{tikzpicture}
						\begin{loglogaxis}[
							title={Value function $H^1$-norm error},
							xlabel={mesh size $h$},
							ylabel={$\norm{u-u_k}_{H^1(\Omega)}$},
							xmax=1.6,
							ymax=5,
							legend pos=north west,
							ymajorgrids=true,
							grid style=dashed,
							]
							
							\addplot[
							color=blue,
							mark=square,]
							coordinates {(0.7071068,3.961503)
								(0.3535534,2.673804)
								(0.1767767,1.729303)
								(0.08838835,1.158686)
								(0.04419417,0.828277)
								(0.02209709,0.6266)
								(0.01104854,0.4885171)
								(0.005524272,0.3862366)
								(0.002762136,0.3082471)
							};
							
							\logLogSlopeTriangleI{0.35}{0.2}{0.09}{0.3}{black};
						\end{loglogaxis}  
					\end{tikzpicture}
				\end{adjustbox}
			\end{subfigure}
			& 
			\begin{subfigure}[b]{0.45\textwidth}
				\begin{adjustbox}{width=\linewidth} 
					\begin{tikzpicture}
						\begin{loglogaxis}[
							title={Density function $H^1$-norm error},
							xlabel={mesh size $h$},
							ylabel={$\norm{m-m_k}_{H^1(\Omega)}$},
							xmax=1.6,
							ymax=0.15,
							legend pos=north west,
							ymajorgrids=true,
							grid style=dashed,
							]
							
							\addplot[
							color=blue,
							mark=square,]
							coordinates {(0.7071068,0.07857169)
								(0.3535534,0.04758792)
								(0.1767767,0.0261656)
								(0.08838835,0.01372489)
								(0.04419417,0.007008995)
								(0.02209709,0.003535148)
								(0.01104854,0.001766751)
								(0.005524272,0.0008753323)
								(0.002762136,0.0004293684)
							};
							\logLogSlopeTriangleII{0.35}{0.2}{0.09}{1}{black};
						\end{loglogaxis}  
					\end{tikzpicture}
				\end{adjustbox}
			\end{subfigure} 
		\end{tabular} 
		\caption{Experiment 1 -- convergence plots for approximations of the value function and density function. The asymptotic rate of convergence for the total error in the $H^1$-norm is close to order $3/10$. This is due to the observed asymptotic rate of convergence in the $H^1$-norm for the approximations of the value function being close to the optimal value of $3/10$. The rate of convergence in the $H^1$-norm of the density function approximations is of optimal order $1$.}
		\label{exp-1:err-plots}
	\end{figure}
	
\noindent The purpose of this experiment is to verify numerically the asymptotic rate of convergence of the total $H^1$-norm error $\norm{m-m_k}_{H^1(\Omega)}+\norm{u-u_k}_{H^1(\Omega)}$ predicted by Theorem \ref{theorem-disc-error-smooth-H-lip}, via Corollary \ref{corollary-optimal-convergence-rate}, for a problem where at least one of $u$ or $m$ is not $H^2$-regular. To this end, we consider the problem \eqref{weakform-C1H} with model data that yields a unique weak solution $(u,m)$, where} {the value function~$u$ has limited fractional order Sobolev regularity.}

{Let the operator $F:L^2(\Omega)\to H^{-1}(\Omega)$ take the form
\begin{equation}\label{F-def-exp-1}
	\langle F[m],\psi\rangle_{H^{-1}\times H^1_0}\coloneqq \int_{\Omega}m\psi\mathrm{d}x+\langle F_1,\psi\rangle_{H^{-1}\times H^1_0}\quad\forall \psi\in H_0^1(\Omega)
\end{equation}
where $F_1\in H^{-1}(\Omega)$ is given by 
\begin{equation}\label{F1-def-exp-1}
	\langle F_1,\psi\rangle\coloneqq \int_{\Omega}\left(\sqrt{|\tilde{v}_1|^2+1}-1-xy\ln(2-x)\ln(2-y)\right)\psi+\tilde{v}_1\cdot\nabla \psi\mathrm{d}x \quad\forall \psi\in H_0^1(\Omega)
\end{equation}
with the vector field $\tilde{v}_1:\Omega\to\mathbb{R}^2$ defined by
\begin{equation}\label{v1-def-exp-1}
	\tilde{v}_1(x,y)\coloneqq \frac{12.8}{(xy(1-x)(1-y))^{0.2}}\begin{pmatrix}
		(1-2x)y(1-y)\\ x(1-x)(1-2y)
	\end{pmatrix}
\end{equation} for $(x,y)\in\Omega$. We take the source term $G\in H^{-1}(\Omega)$ of the form
\begin{equation}\label{G-source-exp-1}
	\langle G,\phi\rangle_{H^{-1}\times H^1_0}\coloneqq \int_{\Omega}\tilde{v}_2\cdot\nabla \phi\mathrm{d}x \quad\forall \phi\in H_0^1(\Omega)
\end{equation}
where $\tilde{v}_2:\Omega\to\mathbb{R}^2$ is defined by 
\begin{equation}\label{v2-def-exp-1}
	\tilde{v}_2(x,y)\coloneqq \begin{pmatrix}
		\left(\ln(2-x)-\frac{x}{2-x}\right)y\ln(2-y)\\
		x\ln(2-x)\left(\ln(2-y)-\frac{y}{2-y}\right)
	\end{pmatrix}
	+\frac{xy\ln(2-x)\ln(2-y)}{\sqrt{|\tilde{v}_1(x,y)|^2+1}}\tilde{v}_1(x,y)
\end{equation} for $(x,y)\in\Omega$. It is clear that $F$ defined by \eqref{F-def-exp-1} via \eqref{F1-def-exp-1} and \eqref{v1-def-exp-1} is strongly monotone in the sense of condition \eqref{strong-mono-F-C1H}. By following the proof of \cite[Lemma 3.3.1]{osborne2024thesis}, one can show that $G$ defined by \eqref{G-source-exp-1} via \eqref{v2-def-exp-1} is nonnegative in the sense of distributions in $H^{-1}(\Omega)$, and that the unique solution to the weak formulation \eqref{weakform-C1H} is given by 
\begin{equation}\label{exact_pair-exp-1}
	u(x,y)\coloneqq 16(xy(1-x)(1-y))^{4/5},\quad m(x,y)\coloneqq xy\ln(2-x)\ln(2-y)
\end{equation}
for all $(x,y)\in\Omega$. It can be shown that the value function $u$ above lies in the fractional order Sobolev space $H^{13/10-\epsilon}(\Omega)$, for all $0<\epsilon\leq 13/10$ while the density function $m$ above is clearly $H^2$-regular.
Theorem~\ref{theorem-disc-error-smooth-H-lip} and Corollary~\ref{corollary-optimal-convergence-rate} imply a rate of convergence for the total $H^1$-norm error of order $3/10$.}
{Figure~\ref{exp-1:err-plots} confirms this theoretical prediction, where we plot
the total error in the $H^1$-norm, as well as its constituent parts, against the mesh size $h$. We observe that the convergence rate} {for the total error is of order $3/10$, as predicted.
In addition, the convergence rates in the errors $\norm{u-u_k}_{H^1(\Omega)}$ and  $\norm{m-m_k}_{H^1(\Omega)}$, being of order $3/10$ and $1$ respectively,} {are also optimal} {in view of the corresponding regularity of $u$ and $m$.}

{%
\subsection{Experiment 2 (Test with nonsmooth density function)}

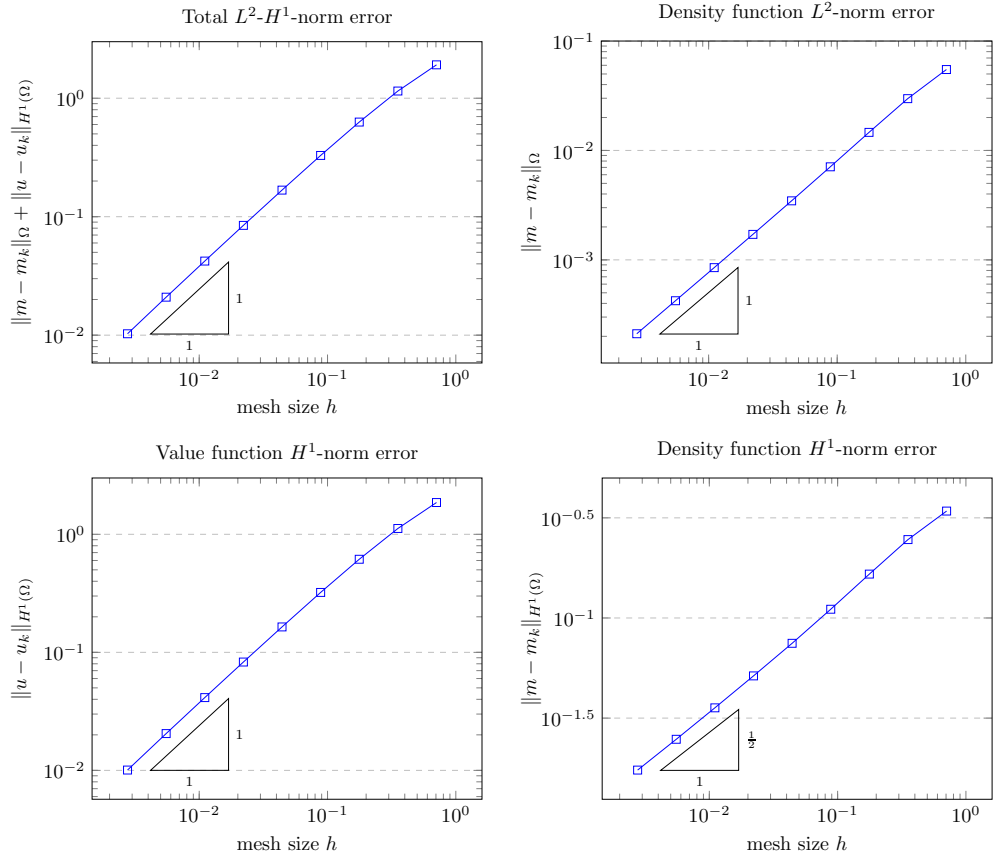
\begin{figure}[tbhp]
	\centering
	\begin{tabular}{c c} 
		\begin{subfigure}[b]{0.45\textwidth}
			\begin{adjustbox}{width=\linewidth}
				\begin{tikzpicture}
					\begin{loglogaxis}[
						title={Total $L^2$-$H^1$-norm error},
						xlabel={mesh size $h$},
						ylabel={$\|m-m_k\|_{\Omega}+\|u-u_k\|_{H^1(\Omega)}$},
						xmax=1.6,
						ymax=3,
						legend pos=north west,
						ymajorgrids=true,
						grid style=dashed,
						]
						
						\addplot[
						color=blue,
						mark=square,]
						coordinates {(0.7071068,1.913557)
							(0.3535534,1.149929)
							(0.1767767,0.6291401)
							(0.08838835,0.3288752)
							(0.04419417,0.1676403)
							(0.02209709,0.08448062)
							(0.01104854,0.04221536)
							(0.005524272,0.02092639)
							(0.002762136,0.01027743)
						};
						
						\logLogSlopeTriangleII{0.35}{0.2}{0.09}{1}{black};			
					\end{loglogaxis}  
				\end{tikzpicture}
			\end{adjustbox}
		\end{subfigure}
		&
		\begin{subfigure}[b]{0.45\textwidth}
			\begin{adjustbox}{width=\linewidth} 
				\begin{tikzpicture}
					\begin{loglogaxis}[
						title={Density function $L^2$-norm error},
						xlabel={mesh size $h$},
						ylabel={$\norm{m-m_k}_{\Omega}$},
						xmax=1.6,
						ymax=0.1,
						legend pos=north west,
						ymajorgrids=true,
						grid style=dashed,
						]
						
						\addplot[
						color=blue,
						mark=square,]
						coordinates {(0.7071068,0.05489617)
							(0.3535534,0.02975226)
							(0.1767767,0.01462773)
							(0.08838835,0.007087418)
							(0.04419417,0.003463722)
							(0.02209709,0.001709492)
							(0.01104854,0.0008490475)
							(0.005524272,0.0004231298)
							(0.002762136,0.0002112284)
						};
						\logLogSlopeTriangleII{0.35}{0.2}{0.09}{1}{black};
					\end{loglogaxis}  
				\end{tikzpicture}
			\end{adjustbox}
		\end{subfigure} 
	\end{tabular} 
	\\
	\begin{tabular}{c c}
		\begin{subfigure}[b]{0.45\textwidth}
			\begin{adjustbox}{width=\linewidth} 
				\begin{tikzpicture}
					\begin{loglogaxis}[
						title={Value function $H^1$-norm error},
						xlabel={mesh size $h$},
						ylabel={$\norm{u-u_k}_{H^1(\Omega)}$},
						xmax=1.6,
						ymax=3,
						legend pos=north west,
						ymajorgrids=true,
						grid style=dashed,
						]
						
						\addplot[
						color=blue,
						mark=square,]
						coordinates {(0.7071068,1.858661)
							(0.3535534,1.120177)
							(0.1767767,0.6145123)
							(0.08838835,0.3217878)
							(0.04419417,0.1641766)
							(0.02209709,0.08277113)
							(0.01104854,0.04136631)
							(0.005524272,0.02050326)
							(0.002762136,0.0100662)
						};
						\logLogSlopeTriangleII{0.35}{0.2}{0.09}{1}{black};
					\end{loglogaxis}  
				\end{tikzpicture}
			\end{adjustbox}
		\end{subfigure}
		& 
		\begin{subfigure}[b]{0.45\textwidth}
			\begin{adjustbox}{width=\linewidth} 
				\begin{tikzpicture}
					\begin{loglogaxis}[
						title={Density function $H^1$-norm error},
						xlabel={mesh size $h$},
						ylabel={$\norm{m-m_k}_{H^1(\Omega)}$},
						xmax=1.6,
						ymax=0.5,
						legend pos=north west,
						ymajorgrids=true,
						grid style=dashed,
						]
						
						\addplot[
						color=blue,
						mark=square,]
						coordinates {(0.7071068,0.3419177)
							(0.3535534,0.2463192)
							(0.1767767,0.1656478)
							(0.08838835,0.1105724)
							(0.04419417,0.07474109)
							(0.02209709,0.05138261)
							(0.01104854,0.0355925)
							(0.005524272,0.02478741)
							(0.002762136,0.01739826)
						};
						\logLogSlopeTriangleIII{0.35}{0.2}{0.09}{0.5}{black};
					\end{loglogaxis}  
				\end{tikzpicture}
			\end{adjustbox}
		\end{subfigure} 
	\end{tabular} 
	\caption{Experiment 2 -- convergence plots for approximations of the value function and density function. The asymptotic rate of convergence in the total error $\|m-m_k\|_{\Omega}+\|u-u_k\|_{H^1(\Omega)}$ is of optimal order 1. The convergence rate in the $H^1$-norm for the approximations of the density function is of order $1/2$, which is also optimal given the lower regularity of the density function.}
	\label{exp-2:err-plots}
\end{figure}

\noindent The aim of this experiment is to illustrate the conclusion of Theorem \ref{theorem-disc-error-smooth-H-lip-app} on the} {the convergence rate} {of the total $L^2$-$H^1$-norm error $\|m-m_k\|_{\Omega}+\|u-u_k\|_{H^1(\Omega)}$.
As such, we consider} {an example where $u$ is smooth, but $m$ has limited fractional order Sobolev regularity.}
{Let the operator $F:L^2(\Omega)\to H^{-1}(\Omega)$ take the form
\begin{equation}\label{F-def-exp-2}
	\langle F[m],\psi\rangle_{H^{-1}\times H^1_0}\coloneqq \int_{\Omega}m\psi\mathrm{d}x+\langle F_2,\psi\rangle_{H^{-1}\times H^1_0}\quad\forall \psi\in H_0^1(\Omega)
\end{equation}
where $F_2\in H^{-1}(\Omega)$ is given by 
\begin{equation}\label{F1-def-exp-2}
	\langle F_1,\psi\rangle\coloneqq \int_{\Omega}\left(\sqrt{|\tilde{r}_1|^2+1}-1-xy\ln(x)\ln(y)\right)\psi+\tilde{r}_1\cdot\nabla \psi\mathrm{d}x \quad\forall \psi\in H_0^1(\Omega)
\end{equation}
with the vector field $\tilde{r}_1:\Omega\to\mathbb{R}^2$ defined by
\begin{equation}\label{v1-def-exp-2}
	\tilde{r}_1(x,y)\coloneqq \begin{pmatrix}
		16(1-2x)y(1-y)\\ 16x(1-x)(1-2y)
	\end{pmatrix}
\end{equation}
for $(x,y)\in\Omega$. We take the source term $G\in H^{-1}(\Omega)$ of the form
\begin{equation}\label{G-source-exp-2}
	\langle G,\phi\rangle_{H^{-1}\times H^1_0}\coloneqq \int_{\Omega}\tilde{r}_2\cdot\nabla \phi\mathrm{d}x \quad\forall \phi\in H_0^1(\Omega)
\end{equation}
where $\tilde{r}_2:\Omega\to\mathbb{R}^2$ is defined by 
\begin{equation}\label{v2-def-exp-2}
	\tilde{r}_2(x,y)\coloneqq \begin{pmatrix}
		\left(1+\ln(x)\right)y\ln(y)\\
		x\ln(x)\left(1+\ln(y)\right)
	\end{pmatrix}
	+\frac{xy\ln(x)\ln(y)}{\sqrt{|\tilde{r}_1(x,y)|^2+1}}\tilde{r}_1(x,y)
\end{equation}
for $(x,y)\in\Omega$. It is clear that $F$ defined by \eqref{F-def-exp-2} via \eqref{F1-def-exp-2} and \eqref{v1-def-exp-2} is strongly monotone in the sense of condition \eqref{strong-mono-F-C1H}. By following the proof of \cite[Lemma 3.3.1]{osborne2024thesis}, one can show that $G$ defined by \eqref{G-source-exp-2} via \eqref{v2-def-exp-2} is nonnegative in the sense of distributions in $H^{-1}(\Omega)$, and that the unique solution to the weak formulation \eqref{weakform-C1H} is given by 
\begin{equation}\label{exact_pair-exp-2}
	u(x,y)\coloneqq 16xy(1-x)(1-y),\quad m(x,y)\coloneqq xy\ln(x)\ln(y)
\end{equation}
for all $(x,y)\in\Omega$. 
It is clear that the value function $u$ above is $H^2$-regular, while the density function $m$ above can be shown to lie in the fractional order Sobolev space $H^{3/2-\epsilon}(\Omega)$, for all $0<\epsilon\leq 3/2$. Recall that the domain $\Omega$ in this experiment is convex. 
We therefore deduce from Theorem~\ref{theorem-disc-error-smooth-H-lip-app} and Corollary~\ref{corollary-optimal-convergence-rate-val-func-convex-domain}, that the total $L^2$-$H^1$-norm error $\norm{m-m_k}_{\Omega}+\norm{u-u_k}_{H^1(\Omega)}$ should converge with a rate of order $1$, despite the lower regularity of $m$ relative to $u$. 
In Figure~\ref{exp-2:err-plots}, }{we see that the results of the experiment are in agreement with the theoretical predictions.
In particular, the composite $L^2$-$H^1$ error converges with a rate of order 1, which is optimal due to the $H^2$-regularity of $u$.
It is also seen that the density function approximations converge to $m$ in the $H^1$-norm at the optimal asymptotic rate of order~$1/2$. 
The results of this experiment thus illustrate how in particular the $H^1$-norm error for the value function approximations can converge with optimal rates, independently of the lower regularity of the density function $m$ of the weak solution.}

\section*{Acknowledgments}
This work was supported by the Engineering and Physical Sciences Research Council [grant number EP/Y008758/1]. This work was also completed with the support of The Royal Society Career Development Fellowship programme [award reference number CDF$\backslash$R1$\backslash$241014].

\ifJOURNAL
\bibliographystyle{amsplain}
\else
\bibliographystyle{siamplain_NoURL}
\fi
\bibliography{p3references_revision}

\begin{thebibliography}{10}

\bibitem{achdou2013mean}
{\sc Y.~Achdou, F.~Camilli, and I.~Capuzzo-Dolcetta}, {\em Mean field games:
  convergence of a finite difference method}, SIAM J. Numer. Anal., 51 (2013),
  pp.~2585--2612, \url{https://doi.org/10.1137/120882421}.

\bibitem{achdou2010mean}
{\sc Y.~Achdou and I.~Capuzzo-Dolcetta}, {\em Mean field games: numerical
  methods}, SIAM J. Numer. Anal., 48 (2010), pp.~1136--1162,
  \url{https://doi.org/10.1137/090758477}.

\bibitem{achdou2016convergence}
{\sc Y.~Achdou and A.~Porretta}, {\em Convergence of a finite difference scheme
  to weak solutions of the system of partial differential equations arising in
  mean field games}, SIAM J. Numer. Anal., 54 (2016), pp.~161--186,
  \url{https://doi.org/10.1137/15M1015455}.

\bibitem{AdamsFournier03}
{\sc R.~A. Adams and J.~J.~F. Fournier}, {\em Sobolev spaces}, vol.~140 of Pure
  and Applied Mathematics (Amsterdam), Elsevier/Academic Press, Amsterdam,
  second~ed., 2003.

\bibitem{ashrafyan2024fully}
{\sc Y.~Ashrafyan and D.~Gomes}, {\em A fully-discrete semi-{L}agrangian scheme
  for a price formation {MFG} model}, arXiv preprint arXiv:2403.02785,  (2024).

\bibitem{bonnans2022error}
{\sc J.~F. Bonnans, K.~Liu, and L.~Pfeiffer}, {\em Error estimates of a
  theta-scheme for second-order mean field games}, ESAIM Math. Model. Numer.
  Anal., 57 (2023), pp.~2493--2528, \url{https://doi.org/10.1051/m2an/2023059}.

\bibitem{BrennerScott08}
{\sc S.~C. Brenner and L.~R. Scott}, {\em The mathematical theory of finite
  element methods}, vol.~15 of Texts in Applied Mathematics, Springer, New
  York, third~ed., 2008, \url{https://doi.org/10.1007/978-0-387-75934-0}.

\bibitem{burman2002nonlinear}
{\sc E.~Burman and A.~Ern}, {\em Nonlinear diffusion and discrete maximum
  principle for stabilized {G}alerkin approximations of the
  convection--diffusion-reaction equation}, Computer Methods in Applied
  Mechanics and Engineering, 191 (2002), pp.~3833--3855,
  \url{https://doi.org/10.1016/S0045-7825(02)00318-3}.

\bibitem{BurmanHansboLarson2018}
{\sc E.~Burman, P.~Hansbo, and M.~G. Larson}, {\em A cut finite element method
  with boundary value correction}, Math. Comp., 87 (2018), pp.~633--657,
  \url{https://doi.org/10.1090/mcom/3240}.

\bibitem{CarliniSilva14}
{\sc E.~Carlini and F.~J. Silva}, {\em A fully discrete semi-{L}agrangian
  scheme for a first order mean field game problem}, SIAM J. Numer. Anal., 52
  (2014), pp.~45--67, \url{https://doi.org/10.1137/120902987}.

\bibitem{CarliniSilve15}
{\sc E.~Carlini and F.~J. Silva}, {\em A semi-{L}agrangian scheme for a
  degenerate second order mean field game system}, Discrete Contin. Dyn. Syst.,
  35 (2015), pp.~4269--4292, \url{https://doi.org/10.3934/dcds.2015.35.4269}.

\bibitem{carlini2018discretization}
{\sc E.~Carlini and F.~J. Silva}, {\em On the discretization of some nonlinear
  {F}okker--{P}lanck--{K}olmogorov equations and applications}, SIAM Journal on
  Numerical Analysis, 56 (2018), pp.~2148--2177.

\bibitem{CarliniSilvaZorkot24}
{\sc E.~Carlini, F.~J. Silva, and A.~Zorkot}, {\em A {L}agrange–{G}alerkin
  scheme for first order mean field game systems}, SIAM Journal on Numerical
  Analysis, 62 (2024), pp.~167--198, \url{https://doi.org/10.1137/23M1561762}.

\bibitem{ChowdhuryErslandJakobsen2023}
{\sc I.~Chowdhury, O.~Ersland, and E.~R. Jakobsen}, {\em On numerical
  approximations of fractional and nonlocal mean field games}, Found. Comput.
  Math., 23 (2023), pp.~1381--1431,
  \url{https://doi.org/10.1007/s10208-022-09572-w}.

\bibitem{Ciarlet1978}
{\sc P.~G. Ciarlet}, {\em The finite element method for elliptic problems},
  vol.~40 of Classics in Applied Mathematics, Society for Industrial and
  Applied Mathematics (SIAM), Philadelphia, PA, 2002,
  \url{https://doi.org/10.1137/1.9780898719208}.

\bibitem{FlemingSoner2006}
{\sc W.~H. Fleming and H.~M. Soner}, {\em Controlled {M}arkov processes and
  viscosity solutions}, vol.~25 of Stochastic Modelling and Applied
  Probability, Springer, New York, second~ed., 2006.

\bibitem{gilbarg2015elliptic}
{\sc D.~Gilbarg and N.~S. Trudinger}, {\em Elliptic {P}artial {D}ifferential
  {E}quations of {S}econd {O}rder}, Springer, 2001,
  \url{https://doi.org/10.1007/978-3-642-61798-0}.

\bibitem{Grisvard2011}
{\sc P.~Grisvard}, {\em Elliptic problems in nonsmooth domains}, vol.~69 of
  Classics in Applied Mathematics, Society for Industrial and Applied
  Mathematics (SIAM), Philadelphia, PA, 2011,
  \url{https://doi.org/10.1137/1.9781611972030.ch1}.
\newblock Reprint of the 1985 original [MR0775683], With a foreword by Susanne
  C. Brenner.

\bibitem{huang2006large}
{\sc M.~Huang, R.~P. Malham\'{e}, and P.~E. Caines}, {\em Large population
  stochastic dynamic games: closed-loop {M}c{K}ean-{V}lasov systems and the
  {N}ash certainty equivalence principle}, Commun. Inf. Syst., 6 (2006),
  pp.~221--251.

\bibitem{Saito2023}
{\sc {Inoue, Daisuke}, {Ito, Yuji}, {Kashiwabara, Takahito}, {Saito, Norikazu},
  and {Yoshida, Hiroaki}}, {\em A fictitious-play finite-difference method for
  linearly solvable mean field games}, ESAIM: M2AN, 57 (2023), pp.~1863--1892,
  \url{https://doi.org/10.1051/m2an/2023026}.

\bibitem{Jensen2017}
{\sc M.~Jensen}, {\em {$L^2(H_\gamma^1)$} finite element convergence for
  degenerate isotropic {H}amilton-{J}acobi-{B}ellman equations}, IMA J. Numer.
  Anal., 37 (2017), pp.~1300--1316,
  \url{https://doi.org/10.1093/imanum/drw055}.

\bibitem{JensenSmears2013}
{\sc M.~Jensen and I.~Smears}, {\em On the convergence of finite element
  methods for {H}amilton-{J}acobi-{B}ellman equations}, SIAM J. Numer. Anal.,
  51 (2013), pp.~137--162, \url{https://doi.org/10.1137/110856198}.

\bibitem{JensenSmears13}
{\sc M.~Jensen and I.~Smears}, {\em On the convergence of finite element
  methods for {H}amilton-{J}acobi-{B}ellman equations}, SIAM J. Numer. Anal.,
  51 (2013), pp.~137--162, \url{https://doi.org/10.1137/110856198}.

\bibitem{lasry2006jeuxI}
{\sc J.-M. Lasry and P.-L. Lions}, {\em Jeux \`a champ moyen. {I}. {L}e cas
  stationnaire}, C. R. Math. Acad. Sci. Paris, 343 (2006), pp.~619--625,
  \url{https://doi.org/10.1016/j.crma.2006.09.019}.

\bibitem{lasry2006jeuxII}
{\sc J.-M. Lasry and P.-L. Lions}, {\em Jeux \`a champ moyen. {II}. {H}orizon
  fini et contr\^{o}le optimal}, C. R. Math. Acad. Sci. Paris, 343 (2006),
  pp.~679--684, \url{https://doi.org/10.1016/j.crma.2006.09.018}.

\bibitem{lasry2007mean}
{\sc J.-M. Lasry and P.-L. Lions}, {\em Mean field games}, Jpn. J. Math., 2
  (2007), pp.~229--260, \url{https://doi.org/10.1007/s11537-007-0657-8}.

\bibitem{osborne2024thesis}
{\sc Y.~A.~P. Osborne}, {\em Analysis and Numerical Approximation of Mean Field
  Game Partial Differential Inclusions}, PhD thesis, University College London,
  UK, 2024.

\bibitem{osborne2023finite}
{\sc Y.~A.~P. Osborne and I.~Smears}, {\em Finite element approximation of
  time-dependent mean field games with non-differentiable {H}amiltonians},
  arXiv preprint arXiv:2306.13174,  (2023),
  \url{https://doi.org/10.48550/arXiv.2306.13174}.

\bibitem{osborne2022analysis}
{\sc Y.~A.~P. Osborne and I.~Smears}, {\em Analysis and numerical approximation
  of stationary second-order mean field game partial differential inclusions},
  SIAM Journal on Numerical Analysis, 62 (2024), pp.~138--166,
  \url{https://doi.org/10.1137/22M1519274}.

\bibitem{osborne2024erratum}
{\sc Y.~A.~P. Osborne and I.~Smears}, {\em Erratum: Analysis and numerical
  approximation of stationary second-order mean field game partial differential
  inclusions}, SIAM Journal on Numerical Analysis, 62 (2024), pp.~2415--2417,
  \url{https://doi.org/10.1137/24M165123X}.

\bibitem{osborne2024regularization}
{\sc Y.~A.~P. Osborne and I.~Smears}, {\em Regularization of stationary
  second-order mean field game partial differential inclusions}, arXiv preprint
  arXiv:2408.10810,  (2024).

\bibitem{rathgeber2016firedrake}
{\sc F.~Rathgeber, D.~A. Ham, L.~Mitchell, M.~Lange, F.~Luporini, A.~T. McRae,
  G.-T. Bercea, G.~R. Markall, and P.~H. Kelly}, {\em Firedrake: automating the
  finite element method by composing abstractions}, ACM Transactions on
  Mathematical Software (TOMS), 43 (2016), pp.~1--27,
  \url{https://doi.org/10.1145/2998441}.

\bibitem{scott1990finite}
{\sc L.~R. Scott and S.~Zhang}, {\em Finite element interpolation of nonsmooth
  functions satisfying boundary conditions}, Math. Comp., 54 (1990),
  pp.~483--493, \url{https://doi.org/10.2307/2008497}.

\bibitem{SmearsSuli2014}
{\sc I.~Smears and E.~S\"{u}li}, {\em Discontinuous {G}alerkin finite element
  approximation of {H}amilton-{J}acobi-{B}ellman equations with {C}ordes
  coefficients}, SIAM J. Numer. Anal., 52 (2014), pp.~993--1016,
  \url{https://doi.org/10.1137/130909536}.

\bibitem{xu1999monotone}
{\sc J.~Xu and L.~Zikatanov}, {\em A monotone finite element scheme for
  convection-diffusion equations}, Math. Comp., 68 (1999), pp.~1429--1446,
  \url{https://doi.org/10.1090/S0025-5718-99-01148-5}.

\end{thebibliography}
\end{document}